\newif\ifArXiV
\date{}
\newcommand{\myDate}{} %can be used for submitting version
\newcommand{\myDate}{} %can be used for submitting version
\tikzstyle{vertex}=[circle,fill=black!25,minimum size=13pt,inner sep=0pt]
\tikzstyle{vertexSmall}=[circle,fill=black!25,minimum size=10pt,inner sep=0pt]  
\tikzstyle{vertexSmallGray}=[circle,fill=black!25,minimum size=4pt,inner sep=0pt, draw] 
\tikzstyle{vertexSmallWhite}=[circle,fill=white,minimum size=4pt,inner sep=0pt, draw]  
\tikzstyle{vertexSmallBlack}=[circle,fill=black,minimum size=4pt,inner sep=0pt, draw]
\tikzstyle{vertexBigBlack}=[circle,fill=black,minimum size=10pt,inner sep=0pt, draw]    
\tikzstyle{vertex2}=[circle,fill=black,minimum size=3pt,inner sep=0pt]
\tikzstyle{vertex3}=[fill=black,minimum size=4pt,inner sep=0pt]
\tikzstyle{inC} = [draw,line width=6pt,-,yellow]
\tikzstyle{inD} = [draw,line width=3pt,-,red!70]
\tikzstyle{bigcircle} = [draw,line width=3pt,-,green]
\numberwithin{algorithm}{section}
\newcommand{\algmargin}{\the\ALG@thistlm}
\newlength{\whilewidth}
\algnewcommand{\longState}[1]{\State%
    \parbox[t]{\dimexpr\linewidth-\algmargin}{\hangindent\algorithmicindent\strut #1\strut}}
\algnewcommand{\Input}[1]{%
    \hspace*{\algorithmicindent} %
    \parbox[t]{\dimexpr\linewidth-\algmargin-\algorithmicindent}{%
        \hangindent\algorithmicindent\strut%
        \textbf{Input: }#1\strut}}
\algnewcommand{\Output}[1]{%
    \hspace*{\algorithmicindent} %
    \parbox[t]{\dimexpr\linewidth-\algmargin-\algorithmicindent}{%
        \hangindent\algorithmicindent\strut%
        \textbf{Output: }#1\strut}}
\newtheorem{theoremMy}{Theorem}[section]
\newaliascnt{observation}{theoremMy}
\newtheorem{observation}[observation]{Observation}
\newaliascnt{lemmaMy}{theoremMy}
\newtheorem{lemmaMy}[lemmaMy]{Lemma}
\newaliascnt{corollary}{theoremMy}
\newtheorem{corollary}[corollary]{Corollary}
\theoremstyle{definition}
\newaliascnt{definition}{theoremMy}
\newtheorem{definition}[definition]{Definition}
\newcommand{\nz}{\mathbb{N}}
\newcommand{\seq}[1]{\langle #1\rangle}
\DeclareMathOperator*{\argmin}{arg\,min}
\algnewcommand{\IfThen}[2]{% \IfThenElse{<if>}{<then>}{<else>}
  \State \algorithmicif\ #1\ \algorithmicthen\ #2}
\algnewcommand{\ForInline}[2]{% \IfThenElse{<if>}{<then>}{<else>}
    \State \algorithmicfor\ #1\ \algorithmicdo\ #2 \algorithmicend\ \algorithmicfor}
\providecommand{\keywords}[1]{\textbf{Keywords:} #1}
\newcommand{\textAlgo}[1]{\text{\it{#1}}}
\begin{document}

% MathSciNet - Classification - MSC2010 database
% 05C62  	Graph representations (geometric and intersection representations, etc.) {For graph drawing, see also 68R10}
% 05C10  	Planar graphs; geometric and topological aspects of graph theory [See also 57M15, 57M25]

%opening
\title{Monotonic Representations of Outerplanar Graphs as
Edge Intersection Graphs of Paths on a Grid}

\ifArXiV %----------------------------------------------------------------------
\author[1]{Eranda \c{C}ela}
\author[2]{Elisabeth Gaar\thanks{The second author acknowledges support by the 
Austrian Science Fund (FWF): I 3199-N31.\myDate}}
\affil[1]{TU Graz, Steyrergasse 30, Graz A-8010, Austria, %\newline 
\href{mailto:cela@math.tugraz.at}{cela@math.tugraz.at}}
\affil[2]{Institute of Production and Logistics Management, Johannes Kepler 
	University Linz, Altenberger Stra{\ss}e 69, Linz A-4040, Austria,
	%\newline 
	\href{mailto:elisabeth.gaar@jku.at}{elisabeth.gaar@jku.at}}

\else %-------------------------------------------------------------------------

\doi{10.7155/jgaa.00xxx}
% --------------------------------------------------------------------
\Issue{0}{0}{0}{0}{0} % volume, number, start page, end page, year
% --------------------------------------------------------------------
\HeadingAuthor{\c{C}ela and Gaar} % short list of authors for header
\HeadingTitle{Monotonic Representations of Outerplanar Graphs as EPGs} % short 
%title for header
% --------------------------------------------------------------------
\Ack{The second author acknowledges support by the 
	Austrian Science Fund (FWF): I 3199-N31.}
% --------------------------------------------------------------------

%\author[first,second,third]{Eranda \c{C}ela}{uthor@some.university.edu}
\authorOrcid[first]{Eranda \c{C}ela}{cela@math.tugraz.at}{0000-0002-5099-8804}
\authorOrcid[second]{Elisabeth Gaar}{elisabeth.gaar@jku.at}{0000-0002-1643-6066}

\affiliation[first]{TU Graz, Steyrergasse 30, Graz A-8010, Austria}

\affiliation[second]{Institute of Production and Logistics Management, Johannes 
Kepler 	University Linz, Altenberger Stra{\ss}e 69, Linz A-4040, Austria}

%% --------------------------------------------------------------------
\submitted{March 2022}%
\reviewed{August 2022}%
\revised{September 2022}%
\accepted{September 2022}%
\final{September 2022}%
\published{2022}%
\type{Regular paper}%
\editor{William Evans}%
%% --------------------------------------------------------------------
\fi %---------------------------------------------------------------------------

\maketitle

%-------------------------------------------------------------
% Abstract and Kurzfassung
%-------------------------------------------------------------

\begin{abstract}
 
In a representation of  a graph $G$ as an edge intersection graph of paths on a
grid (EPG) every vertex of $G$ is represented by a path on a grid and two  
paths share a grid edge iff the corresponding  vertices are
adjacent.
In a monotonic EPG
representation every path on the grid is  
ascending in both rows and columns. In a (monotonic)  $B_k$-EPG
representation every path on the grid has at most $k$ bends.
The (monotonic) bend number  $b(G)$ ($b^m(G)$) of a graph 
$G$ 
is   the smallest natural number $k$ for which there exists a (monotonic)
$B_k$-EPG representation of $G$.

In this paper we deal with  the monotonic bend number of outerplanar graphs and show
  that $b^m(G)\leqslant 2$ holds for  every outerplanar graph~$G$.
Moreover, we characterize  the maximal outerplanar graphs and the cacti
with  (monotonic)  bend number equal to $0$, $1$ and $2$
in terms of forbidden induced subgraphs.
As a byproduct we obtain low-degree polynomial time algorithms to 
 construct (monotonic) EPG
representations with the smallest possible number of bends  for
 maximal outerplanar graphs and cacti.

 \keywords{intersection graphs, paths on a grid, outerplanar graphs, cacti}
\end{abstract}

%-------------------------------------------------------------
% Inhaltsverzeichnis
%-------------------------------------------------------------
\addcontentsline{toc}{section}{Contents}
%\tableofcontents

%-------------------------------------------------------------
% Introduction
%-------------------------------------------------------------
\section{Introduction and definitions}\label{intro:sec}

Edge intersection graphs of paths on a grid were introduced in 2009 by Golumbic, Lipshteyn and
Stern~\cite{startpaper}. A graph $G$ is called \emph{an edge intersection graph of paths on a grid} (\emph{EPG})
if there exists  a rectangular grid with horizontal and vertical grid lines and a set of paths along
the grid lines such that the vertices  of $G$ correspond to the paths and two vertices  are adjacent in $G$
 if and only if the corresponding paths share a grid edge.
 In this case we say that the paths \emph{intersect}.
 Such a representation of a graph is called an \emph{EPG representation}.

An EPG representation is called a \emph{$B_{k}$-EPG representation}, if every path has at most $k$ bends,
for some $k\in \nz$.  A path on a grid is called \emph{monotonic},
if it is ascending in both columns and rows.
An EPG representation is called \emph{monotonic} if every path is monotonic. A monotonic $B_{k}$-EPG representation is called a \emph{$B_{k}^{m}$-EPG representation}.
Furthermore a graph is called  \emph{$B_{k}$-EPG} or \emph{$B_{k}^{m}$-EPG} if there is a
$B_{k}$-EPG representation or a $B_{k}^{m}$-EPG representation, respectively.
We denote by $B_{k}$ and $B_{k}^{m}$ the class of all graphs which are $B_{k}$-EPG and $B_{k}^{m}$-EPG,
respectively.
The \emph{bend number} $b(G)$ and the \emph{monotonic bend number} $b^m(G)$ of a graph $G$ are
defined as the minimum $k \in \nz$
and $\ell \in \nz$ such that $G$ is in $B_{k}$ and $B_{\ell}^{m}$,
respectively.

EPGs  were initially  motivated by  applications in  circuit layout design and chip manufacturing.
For a detailed description of these applications and their  relationship to  
EPGs we refer to~\cite{knockknee, startpaper, AsurveyOnWiring}.
%%%%%%%%%%%%%%%%%%
\begin{comment}
EPGs can be seen  as generalizations of \emph{edge intersection graphs of paths on a tree} (EPT). 
EPTs were introduced already in 1985 by Golumbic and Jamison in~\cite{firstEPT1} and~\cite{firstEPT2}.
A graph $G$ is called  an EPT if there exists a tree and a set of paths on that tree such that the
vertices of $G$ correspond to the paths  and two vertices in $G$ are adjacent
iff the corresponding paths share an edge in the tree.
Another generalization of EPTs are \emph{vertex intersection graph of paths on a tree (VPT)}.
A graph $G$ is called a VPT if there exists  a tree and a set of paths   on that  tree,
such that the vertices of $G$ correspond to the paths and  two vertices are
adjacent iff the corresponding  paths intersect in at least one vertex of the tree.
Thus  in the case of  VPTs the term intersection does not refer to sharing a tree
edge  but only to sharing a tree vertex.
VPTs are also called \emph{path graphs} and were studied for example in~\cite{VPT1}.
 VPTs were generalized analogously to EPTs to obtain  the \emph{vertex
   intersection graphs of paths on a grid} (\emph{VPG})  which 
were introduced in~\cite{VPG}. In this work we will focus exclusively  on EPGs. 
\end{comment}
%begin New
Similar concepts which have been investigated in the literature include
\emph{edge intersection graphs of paths on a tree} (EPT),
\emph{vertex intersection graphs of paths on a tree} (VPT) and \emph{vertex
   intersection graphs of paths on a grid} (VPG), see \cite{VPG,VPT1,firstEPT1,firstEPT2}.
%end New

Many  research papers have dealt with  (monotonic) EPG graphs since the introduction of the
concept, see \cite{H, B, C, E,J, startpaper, G, D, F}.
%SSplitGraphsB1, TSmallGrid, QCircularArcGraphsB3, MCographsInB1EPG, LRecognizionB2EPGisNPComplete, KHalinGraphs}.
One of the  well studied questions is  the relationship between  the  classes $B_{k}$ and
$B_k^ m$, for $k \in  \nz$.
Golumbic, Lipshteyn and Stern~\cite{startpaper} showed that each graph is $B_{k}$-EPG and
$B_{\ell}^{m}$-EPG for some $k, \ell \in \nz$.
Obviously $B_{0} \subseteq B_{1} \subseteq B_{2} \subseteq \dots$ and
$B_{0}^{m} \subseteq B_{1}^{m} \subseteq B_{2}^{m} \subseteq \dots$ hold.
 Heldt, Knauer and Ueckerdt~\cite{F} have shown that the first chain of
 inclusions above is strict,  i.e.\ $B_{k} \subsetneqq B_{k+1}$ holds for every
 $k \geqslant 0$. %To this end the authors investigated the membership of
% complete bipartite graphs in $B_{k}$ and  derived several positive and negative
 %results on that.\\
Clearly $B_{k}^{m} \subseteq B_{k}$ holds for every $k$ and the relationships
$B_{0} = B_{0}^{m}$ and $B_{0} \subseteq B_{1}^{m}$ are trivial. 
 Golumbic, Lipshteyn and Stern conjectured in \cite{startpaper} that $B_{1}^{m}
 \subsetneqq B_{1}$ and this  was confirmed in~\cite{E}.
 %We will not deal will questions of this type in this paper. 

 The   recognition problem for the   classes $B_k$ and $B_k^m$,  $k \in
 \nz$,  has been investigated. 
It asks whether
an input graph $G$ is $B_k$-EPG ($B_k^m$-EPG). The recognition problem for
 $B_0$
 %(or equivalently $B_0^m$)
 is solvable in linear time. This follows from the observation that 
 a graph $G$ is  $B_0$-EPG %(or equivalently $B_0^m$-EPG)
 if and only if  $G$ is an interval graph and from the fact that interval graphs can be
  recognized in linear time, see Booth and Lueker~\cite{BooLue76}.
Heldt, Knauer and Ueckerdt~\cite{F} have proven that the recognition problem for
$B_{1}$ is NP-complete.  Cameron, Chaplick and Ho\`{a}ng~\cite{E},
have shown that the recognition problem for the class  $B_{1}^{m}$  is
 NP-complete as well. Recently Pergel and 
 Rz{\k{a}}{\.{z}}ewski~\cite{LRecognizionB2EPGisNPComplete}
have settled the NP-completeness of the recognition problem  also for the 
classes   $B_{2}$ and  $B_{2}^{m}$.  

The computation of the (monotonic) bend number of a given graph is closely
related to the recognition problem for the classes $B_k$ ($B_k^m$), $k\in
\nz$.  The results on recognition problems mentioned
above imply  that the computation of the (monotonic) bend number $b(G)$
($b^m(G)$) of a graph $G$   is a hard problem in general.
Thus the identification of   upper bounds on $b(G)$, $b^m(G)$ 
is a reasonable task.  
%Planar graphs are one of the first graph classes for which an upper bound on
%the bend number was given. 
 Biedl and Stern~\cite{C}  have shown that  $b(G)\le 5$ holds for planar graphs.
 This  upper bound was improved to  $4$ by
Heldt, Knauer and Ueckerdt~\cite{D}. Moreover Heldt et al.~\cite{D} have also
 shown  that $b(G)\le 2$ holds if $G$ is  outerplanar.
 This results holds also for  Halin graphs as shown by  Francis and 
 Lahiri~\cite{KHalinGraphs}.
 We strengthen the result of~\cite{D} and show that $b^m(G)\le 2$ holds if $G$ 
 is
 outerplanar. 
%Recently Francis and Lahiri~\cite{KHalinGraphs} proved that every Halin graph has a $B_{2}$-EPG representation
%that uses only C-shaped paths  and a $B_{2}$-EPG representation that uses only
%S-shaped paths. These results   imply that $2$  is an upper bound for the bend
%number and also for the monotonic  bend number of  
%Halin graphs. 

Another question considered in the literature is the following: given a 
certain  class of graphs and a natural number
$k$, characterize the members of the class which belong to $B_k$ ($B_k^m$). 
 Deniz, Nivelle, Ries and Schindl~\cite{SSplitGraphsB1} provided a characterization of $B_{1}$-EPG 
split graphs  for which there exists  a  $B_1$-EPG representation using only $L$-shaped paths on the  grid. 
In this paper we  provide  characterizations of  maximal outerplanar graphs
and cacti with  a given  (monotonic)  bend  number.

 The recognition problem for $B_k$ ($B_k^m$) and the characterization
 of $B_k$-EPG ($B_k^m$-EPG) 
graphs is also relevant from an algorithmic point of view.
Indeed several difficult  problems from algorithmic graph theory
turn out to be  tractable  on  $B_k$-EPG graphs for a given  $k\in \nz$, 
see~\cite{OCliqueColoringB1,NMaxIndependentSetInB1EPG,RMaxCliqueInB2,J}.
\medskip

{\bf Organization of the paper.}
In \autoref{sec:OuterplanarGraphs} we  show that the monotonic bend number of outerplanar graphs is at
most $2$. %Thus in the case of outerplanar graphs $2$ is a common upper bound
%both on the bend number and on the monotonic bend number.
In \autoref{sec:nSun} we  derive the  (monotonic) bend number of  the so-called
$n$-sun graph, for
$n\in \nz$, $n\geqslant 3$. The results on $n$-sun graphs  are  relevant for
the computation of the (monotonic) bend number of outerplanar graphs. 
In \autoref{sec:outerplanar_triangulations} and \autoref{sec:Cacti} we  deal
with   maximal outerplanar graphs and  cacti, respectively,  and derive a full
characterization for their membership  in $B_{0}$, $B_{1}^{m}$, $B_{1}$ and 
$B_{2}^{m}$.
%The proofs of all results as well as particular definitions and properties 
%needed 
%for the proofs are provided in \autoref{sec:Proofs}. 
We conclude with a summary  and some open questions in
\autoref{sec:Conclusions}.
 \medskip

{\bf Terminology  and notation.} 
The crossings of two grid lines are called \emph{grid points}.
The part of a grid line between two consecutive  grid points is called a \emph{grid edge}
and the two consecutive grid points are the \emph{vertices of the grid edge}.
We distinguish between \emph{horizontal grid edges} and \emph{vertical grid edges}.
A \emph{path $P$ on a grid} is a sequence of grid points
$\seq{v_0,\ldots,v_i,\ldots,v_p}=:P$ such that any 
 two consecutive grid points
 %$v_i$, $v_{i+1}$, $i\in \{0,1,\ldots,p-1\}$,
 are connected by a grid edge.  %$(v_i,v_{i+1})$
 The later  are called \emph{edges of the path $P$}. The grid point
 $v_0$ is
called  the  \emph{start point} of $P$  and $v_p$ is called the \emph{end point} of $P$.
 A   \emph{bend point} $v_i$  of $P$, $i\not\in \{0,p\}$, 
 %=\seq{v_0,\ldots,v_i,\ldots,v_p}$
 is a grid vertex such that one of  the   edges  $(v_{i-1},v_i)$ and
                                $(v_i,v_{i+1})$ of $P$ is   a
 horizontal grid edge and the other one is  a vertical grid edge. 
 The subpath $\seq{v_{i-1},v_i,v_{i+1}}$  is  called a \emph{bend} on  $P$.
 The part of a path $P$ between two consecutive bend points is called a \emph{segment} of $P$.
 Also the parts of   $P$  from the start point to the first bend point and from
 from the last 
bend point to the end point are called \emph{segments}.  A segment that 
consists of horizontal (vertical) edges  is called \emph{horizontal} 
(\emph{vertical}) segment.
We say that two paths on a grid \emph{intersect}, if they  have at least one common grid edge.

If a graph $G$ contains an induced subgraph
isomorphic to a graph $H$, we consider that  induced subgraph to be  a \emph{copy
    of $H$ in $G$} and say that \emph{$G$ contains a copy of $H$}.
Finally notice  that when talking about an outerplanar graph $G$,
we consider an arbitrary but fixed outerplanar embedding of $G$.

%---- Definitions needed only for the second paper -----------------
%Two paths $P_1$ and $P_2$ are said to have a \emph{crossing} at  a grid point   $v$ belonging
%to both of them,   if  a) $v$  is neither a bend point, nor a start point,  nor an end point for any of the paths,
%and b) the paths share no grid edge  adjacent to  $v$.

%Two paths $P_1$ and $P_2$ are said to have a \emph{pseudocrossing} at  a grid
%point   $v$ if a) they  do not have a crossing at $v$, and b)  each of the grid lines
%intersecting at $v$ contains a segment of one of the paths. 
%If two segments use the same grid line but do not intersect,
%then we call them \emph{aligned}. 

% Given a particular path $P$  from  a family  of
%paths ${\cal P}$
%on a grid,  a segment of $P$ is called a  \emph{ free
%  segment}  if  there is no other 
%path $P'\in {\cal P}$ such that $P$ and $P'$ share  some grid edge contained in    that segment.  

%-------------------------------------------------------------
% Outerplanar Graphs
%-------------------------------------------------------------

%\section{Results}
%-------------------------------------------------------------
\section{Outerplanar graphs are in \texorpdfstring{$B_{2}^{m}$}{B2m}}
\label{sec:OuterplanarGraphs}
%\subsection{Outerplanar Graphs}

%--------------------------------------------------------
% Known Results on Outerplanar Graphs
%--------------------------------------------------------

%The research on the bend number of outerplanar graphs was initiated by Biedl and Stern.
Biedl and Stern~\cite{C} proved 
 that every outerplanar graph is in $B_{3}$ and showed the existence of an
 outerplanar graph which does not  belong to $B_{1}$. Furthermore, 
 they conjectured
that every outerplanar graph is in $B_{2}$. This conjecture was confirmed by
Heldt, Knauer and Ueckerdt in~\cite{D} who proved 
that every graph with treewidth at most $2$ is in $B_{2}$ and exploited the
fact  
that  every outerplanar graph  has treewidth at most 
$2$, see~\cite{OuterplanarGraphsTreewidth2}.
% \begin{corollary}[Heldt, Knauer, Ueckerdt~\cite{D}]
% Every outerplanar graph is in $B_{2}$.
% \end{corollary}
% \begin{proof}
% In Bodlaender~\cite{OuterplanarGraphsTreewidth2} the proof that every
% outerplanar graph has treewidth 
%at most $2$ can be found. Therefore it follows from
%\autoref{treewidth_2_in_B2} that every outerplanar graph 
%is in $B_{2}$.
% \end{proof}
% 
% \begin{theoremMy}[Heldt, Knauer, Ueckerdt~\cite{D}]
% \label{treewidth_2_in_B2}
% Every graph with treewidth at most $2$ is in $B_{2}$.
% \end{theoremMy}
% 
% \begin{definition}
% Let $k \geqslant 1$. A \emph{$k$-tree} is a graph that can be constructed by
% starting with a clique 
%consisting of $k+ 1$ vertices. Then iteratively add a vertex and all edges from
%this vertex to $k$ other 
%vertices that form a clique are added.
% 
% A graph has \emph{treewidth} $k$ if it is the subgraph of a $k$-tree. 
% \end{definition} 
Thus $2$  is a tight upper bound on the bend number of outerplanar graphs.
We  strengthen this result and show that all outerplanar graphs
belong even to $B_2^{m}$.

To prove this result we construct a  $B_2^m$-EPG representation of a given
 outerplanar graph $G=(V,E)$.  Without loss of generality we
assume $G$ to be connected (a $B_2^m$-EPG representation of an
arbitrary graph  can be  obtained as the union of the disjoint  $B_2^m$-EPG
representations of its connected components). 
Our construction builds upon a  so-called \emph{nice labeling} of the vertices
of $G$, which is a  particular
(not-unique) labeling  of the vertices  of an outerplanar
graph obtained as follows.
Consider an outerplanar embedding of   $G$ and  a new vertex  
$v_0\not\in V$.
Let $n=|V|$ be the order of $G$. Consider some  planar embedding of the planar graph
$G' := (V',E')$ with $V' := V \cup \{v_{0}\}$ and $E' := E \cup \{\{v_{0},i\}:
i \in V\}$,   where every edge in $E'\setminus E$ is drawn so as to start with a short
straight line segment at $v_0$. Then label the vertices of~$V$  by
$v_1,v_2,\ldots,v_n$ such that the straight line segments of the edges
$\{v_0,v_i\}$ are positioned around $v_0$ in counterclockwise order.

In the following we always consider an outerplanar graph with a 
nice
labeling and   identify  its vertices   with  the corresponding 
labels.

We  first observe that the 
nice labeling  of a connected outerplanar graph fulfills two simple but
useful  properties, the \emph{separation
    property} and the \emph{path separation property}, defined below.
  Indeed, these properties are equivalent to each other.
\begin{definition}
	Consider an outerplanar graph $G$ on $n$ vertices   labeled by $(v_{1}, 
	\dots, v_{n})$.  The labeling of the vertices is said to have the
	\emph{separation property} iff for any edge $\{v_i,v_j\}$ in $G$  with 
	$i<j$ 
	the following
	holds:  if $\{v_k,v_\ell\}$ is an edge in
	$G$  with $k,\ell\in \{1,2,\ldots,n\}$ and
	$\{k,\ell\}\cap\{i,j\}=\emptyset$, then either  $i<k<j$ and  $i<\ell<j$ 
	hold, or $k \not\in
	\{i,\ldots, j\}$ and $\ell \not\in
	\{i,\ldots, j\}$ hold.
	
	Analogously, the labeling of the vertices is said to have the
	\emph{path separation property} iff for any path $P = (v_i = v_{p_1}, 
	v_{p_2}, \dots, v_{p_r}=v_j)$ in $G$  with 
	$i<j$ the following
	holds:  if $\{v_k,v_\ell\}$ is an edge in
	$G$  with $k,\ell\in \{1,2,\ldots,n\}$ and
	$\{k,\ell\}\cap\{p_1,\dots,p_r\}=\emptyset$, then either  $i<k<j$ and  
	$i<\ell<j$ hold, or $k \not\in
	\{i,\ldots, j\}$ and $\ell \not\in
	\{i,\ldots, j\}$ hold.    
\end{definition}

%\begin{comment}
%The following observation is straightforward.
\begin{observation}
Let $G=(V,E)$ be a connected outerplanar graph  with vertex set 
$V=\{1,2,\ldots,n\}$.
Any nice labeling $(v_1, v_2, \ldots, v_n)$  of the vertices has the 
separation property and the path separation property.   
\end{observation}

\begin{proof} Consider a planar embedding of  $G'$ which gives rise to  
  the nice labeling  of the vertices of $G$ as described above.
To prove  the separation property consider an  arbitrary  edge $\{v_i,v_j\}$ in
$G$ and any other edge  $\{v_k,v_\ell\}$ in
$G$  such that  $k,\ell\in \{1,2,\ldots,n\}$ and
$\{k,\ell\}\cap\{i,j\}=\emptyset$ hold. Then,  $v_0,v_i,v_j$ close a cycle $C$
of length $3$ in $G'$ and $v_k$ and $v_\ell$ have to be  either both inside or both outside of $C$.  
In the first case $i<k<j$ and  $i<\ell<j$ hold, in the second case $k  \not\in
\{i,\ldots, j\}$ and $\ell \not\in \{i,\ldots, j\}$ hold.
%\smallskip

The path separation property can be shown by analogous arguments.
%Now assume that the path separation property does not hold  and consider a 
%path   $P = (v_i = v_{p_1}, 
%	v_{p_2}, \dots, v_{p_r}=v_j)$ in $G$  with $i<j$ and an edge
%        $\{v_k,v_\ell\}$ in $G$ with 
%$\{k,\ell\}\cap\{p_1,\dots,p_r\}=\emptyset$,
%        such that $i < k < j$ and $\ell \not\in \{i,\ldots, j\}$ hold.
% So either $\ell < i$ or $\ell > j$ hold.
%To illustrate the idea we  consider the case $\ell > j$ (the other case could
%be handled analogously).    
%Let $j'$ be the smallest index, such that $k < p_{j'} < \ell$. Such an 
%index exists, 
%because $k < p_r = j < \ell$ and $p_1 = i < k$. Then either $p_{j'-1} < k$ 
%or 
%$p_{j'-1} > \ell$.  Hence,  the existence of the edge 
%$\{v_{p_{j'-1}}, v_{p_{j'}}\}$ in  $P$ is a contradiction to the 
%separation property with the edge $\{v_k, v_{\ell}\}$.  
\end{proof}

Next we present \autoref{outerplanarB2m:construct} which  constructs a $B_2^m$-EPG representation of a given
connected outerplanar graph $G$ with a given  nice labeling of its
vertices. The algorithm considers the vertices of $G$
one by one in a particular  order.
The subroutine \textproc{Explore}$(v,\dots)$ constructs a  monotonic path on the
grid with at most two horizontal
segments and at most one vertical segment for every neighbor of $v$, for which
no path has been constructed yet. 
%A graphical illustration of \textproc{Explore}$(v,\dots)$ is given in 
%\autoref{fig:construction_out_in_b2m}. 
%RENI
The construction of the paths is done while maintaining the following
  invariant property
   at every call of \textproc{Explore}$(v_i,\dots)$  during  the execution of the algorithm:
  the path $P_{v_i}$ on the grid corresponding to the  green vertex $v_i$ which 
  is currently being
  explored,  
  has  a \emph{free} region  $\mathcal{R}_{v_i}$ on the lower horizontal segment,
  i.e.\ a region where there is   no  intersection of $P_{v_i}$ with any of the
  paths constructed so far.
The construction of the  paths corresponding to the neighbors of $v_i$ in \textproc{Explore}$(v_i,\dots)$ is illustrated in
\autoref{fig:construction_out_in_b2m}. 
There  we highlight 
the free parts existing
at the moment when \textproc{Explore}$(v_i,\dots)$  is called  in light gray
and the free parts of the paths constructed during
\textproc{Explore}$(v_i,\dots)$ in dark gray.
%RENI

The subroutine \textproc{UpdateGraph}$(v,\dots)$ deletes from $G$ the 
vertex~$v$ which
was explored  at  last together with  its incident edges.
Moreover, it deletes the edges connecting the  neighbors of $v$ among
each other,  and also the edge connecting the last neighbor of $v$ to the
green vertex with the second smallest label, if such an edge exist.
Notice that the deleted edges are precisely the edges for which
the corresponding intersections of paths on the grid have already  been constructed
during the last call of  \textproc{Explore}$(v,\dots)$. 
\begin{algorithm}[tbp]
  \footnotesize
    \caption{Construct a $B_2^m$-EPG representation of a  connected  outerplanar graph}
    \label{outerplanarB2m:construct}
    \Input{A connected  outerplanar  graph  $G = (V,E)$ with $n = |V|$ and $V = \{v_1, v_2,
        \dots, v_n\}$ ordered according to a nice
        labeling % constructed by \autoref{alg: nice labeling}
    }
    \Output{A $B_2^m$-EPG representation of $G$}
    \begin{algorithmic}[1]
        \Procedure{B2M\_Outerplanar}{$G$}
\State Set $\textAlgo{color}(v_i):=\textAlgo{gray}$ for all $i\in 
\{1,2,\ldots,n\}$
\State Set $\textAlgo{color}(v_1):=\textAlgo{green}$, 
$\textAlgo{ListGreen}:=\{v_1\}$\label{color}
\longState{Draw  the path
$P_{v_1}$ representing $v_1$ as a straight horizontal line on the
grid\label{draw}}
\State Set $G':= G$, $V':=V$, $E':=E$
\While{$\textAlgo{ListGreen}\neq \emptyset$}
\State Set $i:= \min \{ j\colon v_{j} \in \textAlgo{ListGreen}\}$ \label{alg: 
b2m out set i}
\State Set $i^{\ast}:= \inf \{ j\colon v_{j} \in \textAlgo{ListGreen}, j\neq 
i\}$ (may be $\infty$)
\State Set $\ell:=\deg_{G'}(v_i)$
\longState{Let $\textAlgo{ListNeighbors} 
:=(v_{i_1},v_{i_2},\ldots,v_{i_{\ell}})$ be the list 
of the $\ell$  neighbors of $v_i$ in $G'$
such
that $1\leqslant i_1<i_2<\ldots <i_\ell \leqslant n$}\label{alg: b2m out neighbors}
\State $\textAlgo{color} :=$ \Call{Explore}{$v_i$, $E'$, 
$\textAlgo{ListNeighbors}$, 
$i^\ast$, $\textAlgo{color}$}\label{algLine: b2m call explore}
\State $(V', E') :=$ \Call{UpdateGraph}{$v_i$, $V'$, $E'$, 
$\textAlgo{ListNeighbors}$, $i^\ast$} \label{alg: b2m out update graph}
\State $G' := (V',E')$
\State $\textAlgo{ListGreen} := \{v_j \in V': \textAlgo{color}(v_j) = 
\textAlgo{green}\}$ 
\label{alg: 
b2m out list green}
\EndWhile
\State \Return
\EndProcedure
\end{algorithmic}
\end{algorithm}
%%%%%%%%%%%%%%%%%%%%%%%%%%%%%%%%%%%%%%%%%%%%%%%%%
\begin{algorithm}[tbp]
  \footnotesize
    \begin{algorithmic}[1]
        \Procedure{Explore}{$v_i$, $E'$, $\textAlgo{ListNeighbors}$, $i^\ast$, 
        $\textAlgo{color}$}
\State Set $\textAlgo{color}(v_{i_j}):=\textAlgo{green}$ for all $j \in \{1, 2, 
\dots, 
\ell\}$ 
\label{color_in_explore}
 \For{$j = 1, \dots , \ell-1$}
\If{$j=1$ 
	%and $j< \ell$
}  \label{startdraw_in_explore}
\longState{Construct
  the path %$P_{v_{i_j}} = 
  $P_{v_{i_1}}$   as
shown in \autoref{fig:construction_out_in_b2m}(a) ($P_{v_i}$ in the
picture represents the lower horizontal segment of the already constructed path 
$P_{v_i}$)}
\ElsIf{$\{v_{i_{j-1}},v_{i_j}\}\not\in E'$} %If{$j<\ell$}
\longState{Construct the path $P_{v_{i_j}}$  as
shown in \autoref{fig:construction_out_in_b2m}(b)}
\Else
\longState{Construct the path $P_{v_{i_j}}$ as
shown in \autoref{fig:construction_out_in_b2m}(c)}
\EndIf
%\EndIf
\EndFor
%\State Set $\textAlgo{color}(v_{i_\ell}):=\textAlgo{green}$ 
%\label{color_in_explore}
\If{$i^\ast = \infty$  or $\{v_{i_{\ell}},v_{i^{\ast}}\}\not \in E'$  }
\longState{Construct the path %$P_{v_{i_{j}}} = 
	$P_{v_{i_\ell}}$   as
shown in \autoref{fig:construction_out_in_b2m}(d), where the dotted line is drawn iff $\{v_{i_{\ell-1}},v_{i_{\ell}}\} \in E'$ (If $i^\ast = \infty$, then $P_{v_{i^{\ast}}}$ is not there. Otherwise $P_{v_{i^{\ast}}}$ in
the picture represents
 the lower horizontal segment of the already constructed path $P_{v_{i^{\ast}}}$)}
\Else
\longState{Construct the path %$P_{v_{i_{j}}} = 
	$P_{v_{i_\ell}}$  as
shown in \autoref{fig:construction_out_in_b2m}(e), where the dotted line is 
drawn iff $\{v_{i_{\ell-1}},v_{i_{\ell}}\} \in E'$}
\EndIf\label{enddraw_in_explore}
\State \Return $\textAlgo{color}$
\EndProcedure
\end{algorithmic}
\end{algorithm}

\begin{figure}[tbp]
    \centering
    \begin{minipage}[b]{0.21\linewidth}
        \centering
        \begin{center}
\begin{tikzpicture}
  [scale=.65]

\newcommand\Vspace{0.15}

    \fill[gray!20] 
    (-0.2,4) --(-0.2,0.7)  
    to[out=-20, in=200] 
    (4.2,0.7) -- (4.2,4) 
    to[out=170, in=10] 
    (-0.2,4);
    \node[below,gray] at (3.5,0.5) {$\mathcal{R}_{v_i}$};

    \fill[fill=gray!70] (1.5,1) ellipse (0.4 and 0.6);
    \node[right,gray] at (2,1.1) {$\mathcal{R}_{v_{i_{1}}}$};

	\draw[dashed] (0,3+\Vspace) -- (1,3+\Vspace);
	\draw[dashed] (3,3+\Vspace) -- (4,3+\Vspace);

	\draw (1,3+\Vspace) --  node[above,pos=0.85]  {$P_{v_{i}}$} (3,3+\Vspace); 
	
	\draw (1,1) --  (2,1) -- node[left,pos=0.5]  {$P_{v_{i_{1}}}$} (2,3) -- (2.5,3);

\end{tikzpicture}
\end{center}
        (a)
    \end{minipage}
    \quad
    \begin{minipage}[b]{0.30\linewidth}
        \centering
        \begin{center}
\begin{tikzpicture}
  [scale=.63]

\newcommand\Vspace{0.15}

    \fill[gray!20] 
    (-0.2,4) --(-0.2,1)  
    to[out=-30, in=210] 
    (6,1) -- (6,4) 
    to[out=170, in=10] 
    (-0.2,4);
    \node[below,gray] at (5.5,0.7) {$\mathcal{R}_{v_i}$};  
    
    \fill[fill=gray!70] (3.5,2) ellipse (0.4 and 0.6);
    \node[right,gray] at (4,1.5) {$\mathcal{R}_{v_{i_j}}$};  
    
    \fill[fill=gray!70] (1.5,1) ellipse (0.4 and 0.6);
    \node[right,gray] at (2,0.5) {$\mathcal{R}_{v_{i_{j-1}}}$};

	\draw[dashed] (0,3+\Vspace) -- (1,3+\Vspace);
	\draw[dashed] (5,3+\Vspace) -- (5.8,3+\Vspace);
	\draw (1,3+\Vspace) --  node[above,pos=0.85]  {$P_{v_{i}}$} (5,3+\Vspace); 
	
	\draw (1,1) --  (2,1) -- node[left,pos=0.5]  {$P_{v_{i_{j-1}}}$} (2,3) -- (3,3); 
	
		\draw (3,2) --  (4,2) -- node[right,pos=0.5]  {$P_{v_{i_{j}}}$} (4,3) -- (4.5,3);

\end{tikzpicture}
\end{center}
        (b)
    \end{minipage}
    \quad
    \begin{minipage}[b]{0.35\linewidth}
        \centering
        \begin{center}
\begin{tikzpicture}
  [scale=.65]

\newcommand\Vspace{0.15}

    \fill[gray!20] 
    (-0.2,4) --(-0.2,1)  
    to[out=-20, in=200] 
    (7,1) -- (7,4) 
    to[out=170, in=10] 
    (-0.2,4);
    \node[below,gray] at (6.5,0.8) {$\mathcal{R}_{v_i}$};  
    
  \fill[fill=gray!70] (3.5,2) ellipse (0.4 and 0.6);
  \node[right,gray] at (4,1.5) {$\mathcal{R}_{v_{i_j}}$};  
  
  \fill[fill=gray!70] (1.5,1+\Vspace) ellipse (0.4 and 0.6);
  \node[right,gray] at (2,0.5+\Vspace) {$\mathcal{R}_{v_{i_{j-1}}}$};

	\draw[dashed] (0,3+\Vspace) -- (1,3+\Vspace);
	\draw[dashed] (6,3+\Vspace) -- (6.8,3+\Vspace);
	\draw (1,3+\Vspace) --  node[above,pos=0.85]  {$P_{v_{i}}$} (6,3+\Vspace); 
	
	\draw (1,1+\Vspace) --  (2,1+\Vspace) -- node[left,pos=0.5]  {$P_{v_{i_{j-1}}}$} (2,3) -- (4.3,3); 
	
		\draw (3,2) --  (4,2) -- node[right,pos=0.4]  {$P_{v_{i_{j}}}$} (4,3-\Vspace) -- (5.5,3-\Vspace);

\end{tikzpicture}
\end{center}
        (c)
    \end{minipage}
    \centering
    \begin{minipage}[b]{0.45\linewidth}
        \centering
        \begin{center}
\begin{tikzpicture}
  [scale=.65]

\node[draw=white,fill=white] at (7,6) {};

\newcommand\Vspace{0.15}

    \fill[gray!20] 
    (-0.2,4) --(-0.2,1)  
    to[out=-20, in=200] 
    (3.8,1) -- (3.8,4) 
    to[out=170, in=10] 
    (-0.2,4);
    \node[below,gray] at (3.3,0.8) {$\mathcal{R}_{v_i}$};  
    
    \fill[gray!20] 
    (4.6,5.8) --(4.6,4)  
    to[out=-20, in=200] 
    (9,4) -- (9,5.8) 
    to[out=170, in=10] 
    (4.6,5.8);
    \node[below,gray] at (8.4,3.8) {$\mathcal{R}_{v_{i^\ast}}$};  
    
    \fill[fill=gray!70] (5.4-\Vspace,3-\Vspace) ellipse (0.4 and 0.6);
    \node[right,gray] at (5.9-\Vspace,2.5-\Vspace) 
    {$\mathcal{R}_{v_{i_\ell}}$};  
    
    \fill[fill=gray!70] (1.5,2) ellipse (0.4 and 0.6);
    \node[left,gray] at (2.2,1.3) {$\mathcal{R}_{v_{i_{\ell-1}}}$};      
    
    \fill[fill=gray!70] (7.3,5) ellipse (0.4 and 0.6);
    \node[right,gray] at (7.6,4.5) {$\mathcal{R}_{v_{i^\ast}}$};

	\draw[dashed] (0,3+\Vspace) -- (1,3+\Vspace);
	\draw (1,3+\Vspace) --  node[above,pos=0.5]  {$P_{v_{i}}$} (4,3+\Vspace); 
    \draw[dashed] (4,3+\Vspace) -- (4,3.7+\Vspace);
	
	\draw (1.5,2) --  (2,2) -- node[right,pos=0.1]  {$P_{v_{i_{\ell-1 }}}$} (2,3) -- (3,3); 
	
	\draw[dotted] (2.5,3-\Vspace) -- (3.5,3-\Vspace);
	
	\draw (3.5,3-\Vspace) --  (6-\Vspace,3-\Vspace) -- node[right,pos=1.0]  {$P_{v_{i_{\ell}}}$} (6-\Vspace,4); 

	\draw[dashed] (4.8,5) -- (5.5,5);
    \draw[dashed] (8,5) -- (8.8,5);
    \draw (5.5,5) -- node[above,pos=0.2]  {$P_{v_{i^\ast}}$} (8,5);

\end{tikzpicture}
\end{center}
        (d)
    \end{minipage}
    \quad
    \begin{minipage}[b]{0.45\linewidth}
        \centering
        \begin{center}
\begin{tikzpicture}
  [scale=.65]

\newcommand\Vspace{0.15}

    \fill[gray!20] 
    (-0.2,4) --(-0.2,1)  
    to[out=-20, in=200] 
    (3.8,1) -- (3.8,4) 
    to[out=170, in=10] 
    (-0.2,4);
    \node[below,gray] at (3.3,0.8) {$\mathcal{R}_{v_i}$};  
    
        \fill[gray!20] 
        (4.6,5.8) --(4.6,4)  
        to[out=-20, in=200] 
        (9,4) -- (9,5.8) 
        to[out=170, in=10] 
        (4.6,5.8);
        \node[below,gray] at (8.4,3.8) {$\mathcal{R}_{v_{i^\ast}}$};  
    
    \fill[fill=gray!70] (5.4-\Vspace,3-\Vspace) ellipse (0.4 and 0.6);
    \node[right,gray] at (5.9-\Vspace,2.5-\Vspace) 
    {$\mathcal{R}_{v_{i_\ell}}$};  
    
    \fill[fill=gray!70] (1.5,2) ellipse (0.4 and 0.6);
    \node[left,gray] at (2.2,1.3) {$\mathcal{R}_{v_{i_{\ell-1}}}$};      

    \fill[fill=gray!70] (7.3,5) ellipse (0.4 and 0.6);
    \node[right,gray] at (7.6,4.5) {$\mathcal{R}_{v_{i^\ast}}$};

	\draw[dashed] (0,3+\Vspace) -- (1,3+\Vspace);
	\draw (1,3+\Vspace) --  node[above,pos=0.5]  {$P_{v_{i}}$} (4,3+\Vspace); 
    \draw[dashed] (4,3+\Vspace) -- (4,3.7+\Vspace);
	
	\draw (1,2) --  (2,2) -- node[right,pos=0.1]  {$P_{v_{i_{\ell-1}}}$} (2,3) -- (3,3); 
	
	\draw[dotted] (2.5,3-\Vspace) -- (3.5,3-\Vspace);
	\draw (3.5,3-\Vspace) --  (6-\Vspace,3-\Vspace) -- node[right,pos=0.5]  {$P_{v_{i_{\ell}}}$} (6-\Vspace,5-\Vspace) -- (6.5,5-\Vspace); 

	\draw[dashed] (4.8,5) -- (5.5,5);
	\draw[dashed] (8,5) -- (8.8,5);
	\draw (5.5,5) -- node[above,pos=0.2]  {$P_{v_{i^\ast}}$} (8,5);

\end{tikzpicture}
\end{center}
        (e)
    \end{minipage}
    \caption{The constructions of the subroutine \textproc{Explore} of \autoref{outerplanarB2m:construct}.}
    \label{fig:construction_out_in_b2m}
\end{figure}
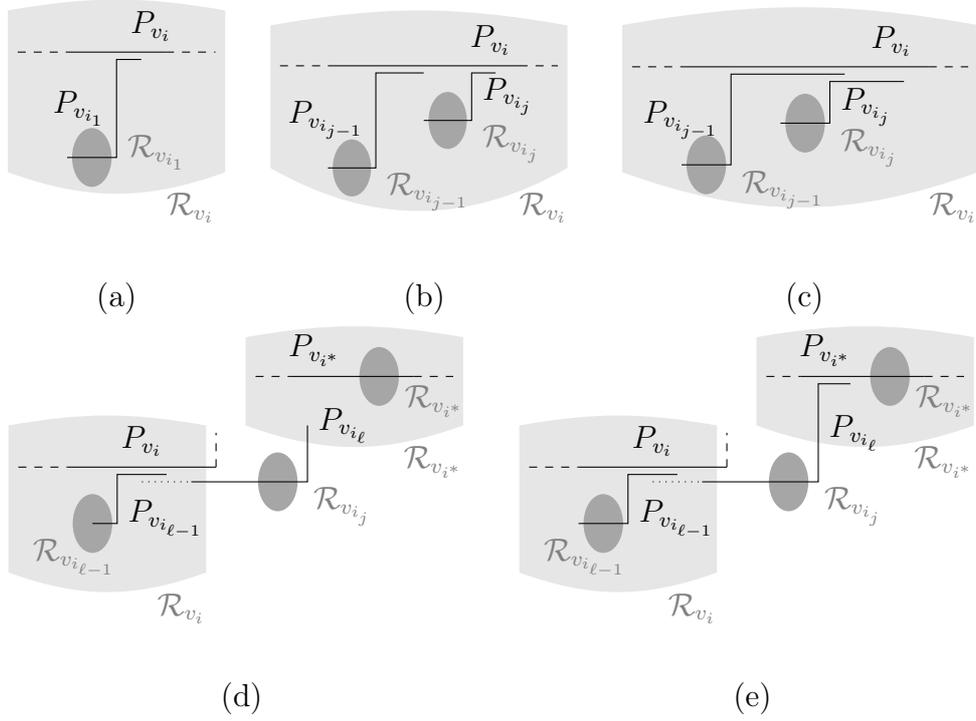

\begin{algorithm}[tbp]
  \footnotesize
    \begin{algorithmic}[1]
        \Procedure{UpdateGraph}{$v_i$, $V'$, $E'$, $\textAlgo{ListNeighbors}$, 
        $i^\ast$}
\State Set $V':=V'\setminus \{v_i\}$, $E':=E'\setminus \{\{v_i,v_k\}\colon
\{v_i,v_k\}\in E'\}$
\For{$j=1,\ldots,\ell-1$} 
\If{$\{v_{i_j},v_{i_{j+1}}\}\in E'$} 
\State $E':=E'\setminus \{\{v_{i_j},v_{i_{j+1}}\}\}$
\EndIf
\EndFor
\If{$\{v_{i_{\ell}},v_{i^{\ast}}\}\in E'$}
\State $E':=E'\setminus \{\{v_{i_{\ell}},v_{i^{\ast}}\}\}$ \label{alg:deleteEgdgeBetweenLastNeighborAndSecondSmallesGreen}
\EndIf
\State \Return $(V',E')$
\EndProcedure
\end{algorithmic}
\end{algorithm}

Next we show  that \autoref{outerplanarB2m:construct} is well-defined.
\begin{lemmaMy}\label{welldefined:lemma}
	\autoref{outerplanarB2m:construct} is well-defined, 
	i.e.\   the  paths on the grid
	can be constructed as described in the algorithm and it terminates. 
	Furthermore
	\autoref{outerplanarB2m:construct} constructs exactly one path for each 
	vertex of the graph.
\end{lemmaMy}
\begin{proof}

	Let $G = (V,E)$ be a connected outerplanar graph  with vertex set 
	$V=\{1,2,\ldots,n\}$
	and let $(v_1, v_2, \ldots, v_n)$ be a nice labeling of its vertices.

	We first prove the first statement of this lemma.
	Observe first that whenever  \autoref{outerplanarB2m:construct}  starts to 
	explore a new vertex $v_i$ in 
	line~\ref{algLine: b2m call explore}  all paths $P_v$ that correspond to 
	some 
	green vertex $v$ 
	have a \emph{free} part $\mathcal{R}_{v}$ on their lower horizontal 
	segment, 
	which is shared by no  other 
	path constructed so far. By construction these free parts are located in 
	the grid in such 
	a way that if $v_k$ and $v_j$ are two green vertices with $k<j$, then the 
	right-most point of 
	the free part of $P_{v_k}$ is located to the left and below the left-most 
	point 
	of the free 
	part of $P_{v_j}$.
	Obviously, this property  clearly holds   when $v_1$ is explored and  it is
	clearly maintained in the later constructions. 
	In \autoref{fig:construction_out_in_b2m}  the free parts of the current 
	iteration are highlighted  in light gray and the free parts determined for 
	the 
	next iteration in 
	dark gray.
	The paths $P_{v_i}$ and $P_{v_{i^*}}$ are located as  depicted in
	\autoref{fig:construction_out_in_b2m}(d) and~(e). Thus, 
	\textproc{Explore($v_i$)} and hence \autoref{outerplanarB2m:construct} can 
	really construct all the paths as
	described.% in .
	
	Observe further that  the termination of the  algorithm is clear:  any 
	explored
	vertex is  deleted and there are only finitely many vertices in $G$.

	Next we prove the second statement of the lemma.  
	\autoref{outerplanarB2m:construct} constructs  a path 
	$P_v$ corresponding to a vertex $v$ in the same call of \textproc{Explore}  
	as 
	it  
	colors 
	$v$  green.  Due to the connectivity of $G$ each vertex is colored green 
	during 
	the algorithm, so it
	constructs at least one path for  each vertex. Thus, to prove the second
	statement of the lemma it is enough to show that whenever
	a vertex is colored green, is has not been colored green before.
	Equivalently, we  show that whenever we explore a 
	vertex $v_i$ in the 
	subroutine $\textproc{Explore}$ all its current neighbors (i.e.\ all its 
	neighbors in the 
	current~$G'$) are still colored $gray$. This is done in two steps. First 
	we  show 
	that (i) all current neighbors of $v_i$ are contained in the set $\{v_1, 
	v_2, \dots, 
	v_{i^\ast}\}$ whenever $i^\ast < \infty$. By construction $v_i$ and 
	$v_{i^\ast}$ are the only 
	green vertices in this set, so  showing that (ii) 
	$v_{i^\ast}$ is 
	not a current neighbor of $v_i$ completes the proof. 
	
	\medskip
	Proof of (i). We  show that if $i^\ast < \infty$ after executing 
	line~\ref{alg: 
		b2m 
		out neighbors} of 
	\autoref{outerplanarB2m:construct}, then $\textAlgo{ListNeighbors} 
	\subseteq \{v_1, v_2, \dots, v_{i^\ast}\}$. 
	In other words and using the notation of line~\ref{alg: b2m out neighbors} 
	we 
	prove that 
	$i_j \leqslant i^\ast$ holds for all neighbors $v_{i_j}$ of $v_i$ in~$G'$.
	Indeed,  assume by contradiction that there is a $j^\ast$ such that 
	$i_{j^\ast} > 
	i^\ast$. The 
	vertex $v_{i^\ast}$ is colored green, hence it was colored green during the 
	exploration  of
	some vertex $x \neq v_i$ which was  explored before~$v_i$.
	The vertex $x$ itself  was colored green during the exploration  of 
	some vertex
	$z$ and so on, until  $v_1$,  the very first vertex explored, is 
	reached.
	Let  $P=(v_1=v_{p_1},v_{p_2},\ldots,  v_{p_r}=v_{i^\ast})$ 
	be a path in $G$ such that $v_{p_j}$ is colored green during the 
	exploration of
	$v_{p_{j-1}}$ for every $2\leqslant j\leqslant 
	r$.                                                                 
	Obviously
	 $p_j \neq i$ and $p_j \neq i_{j^\ast}$ holds for every $1\leqslant 
	j\leqslant r$,
	because explored vertices are deleted, $v_i$ is explored in this iteration 
	and 
	and 
	$v_{i_{j^\ast}}$ is in $G'$. But then,  the path $P$ from $v_1$ 
	to 
	$v_{i^\ast}$ and the edge $\{v_i, v_{i_{j^\ast}}\}$  contradict  the 
	path separation 
	property. 
	%Hence our assumption was wrong and $\textAlgo{ListNeighbors} \subseteq 
	%\{v_1, 
	%v_2, \dots, 
	%v_{i^\ast}\}$  holds.
	
	\medskip
	Proof of (ii). Next we  prove  $v_{i^\ast} \not \in 
	\textAlgo{ListNeighbors}$.
	Assume for contradiction  
	that $v_{i^\ast}$ is among the current 
	neighbors of $v_i$ at some time during  the algorithm, thus  assume that
	$i_\ell = i^\ast$ holds. Consider  the value of
	$i$ at the earliest occurrence of the above phenomenon.
	%during the algorithm,
	%i.e.\ in the course of the 
	%algorithm  $v_i$ is  the first vertex for which $v_{i^\ast}$ is among the  
	%neighbors of $v_i$. 
	Let $P$ be the path from $v_1$ to $v_{i^\ast}$ as before. 
	Analogously, let $Q=(v_1=v_{q_1},v_{q_2},\ldots, v_{q_s}=v_i)$ be  
	the path in $G$ such that $v_{q_j}$ is colored
	green during the exploration of $v_{q_{j-1}}$ for every $2\leqslant 
	j\leqslant 
	s$. 
	Both $P$ and $Q$ start in $v_1$ and end in different vertices. Moreover,    
	for 
	every
	vertex $x$ in $P$ or $Q$  there is exactly one vertex $y$ such that $x$ was 
	colored green
	during the exploration of $y$  (because $i$ was chosen minimal with the
	corresponding property).  Consequently  there is an index $j'$ such that 
	$p_{j} 
	= 
	q_{j}$ for all $j \leqslant j'$ and furthermore $p_j \neq q_k$ for all 
	$j>j'$ 
	and $k>j'$, i.e.\  the paths $P$ and $Q$ coincide for the first $j'$ 
	vertices and then use 
	different vertices.

	First we observe that $p_j, q_k \notin \{i, i+1, \dots, i^\ast\}$ for all 
	$j < 
	r$ and $k < s$, 
	because otherwise the existence of the edge $\{v_i,v_{i^\ast}\}$ and the 
	paths 
	$(v_1=v_{p_1},v_{p_2},\ldots, v_{p_{r-1}})$ and 
	$(v_1=v_{q_1},v_{q_2},\ldots, 
	v_{q_{s-1}})$ 
	would contradict the path separation property.
	Next due to the path separation property with the path~$P$ from $v_1$ to 
	$v_{i^\ast}$, the 
	fact that $Q$ and $P$ use different vertices after 
	$v_{p_{j'}}=v_{q_{j'}}$ and the fact that $q_r = i < i^\ast$ it follows 
	that 
	$q_{k} < i^\ast$.
	Therefore $q_{k} \leqslant i$ for all 
	$j' < k \leqslant s$.
	Due to the separation property these indices have to be ascending, i.e.\
	$q_{k} \leqslant 
	q_{k+1}$ for all $j' < k \leqslant s-1$. Analogously, we have $i^\ast 
	\leqslant 
	p_{j+1} 
	\leqslant p_{j}$ for all $j' < j \leqslant r-1$.
	
	Clearly, when $v_{p_{j'}}$ is explored, both $v_{q_{j'+1}}$ and 
	$v_{p_{j'+1}}$ 
	are among the 
	neighbors of $v_{p_{j'}}$, 
	because they are colored green during its exploration. Furthermore 
	$q_{j'+1} 
	\leqslant i < 
	i^\ast \leqslant p_{j'+1}$, hence $v_{q_{j'+1}}$ is explored before 
	$v_{p_{j'+1}}$. By 
	iteratively using this argument we get that all $v_{q_{k}}$,  $j'+1 
	\leqslant k
	\leqslant s$,  are 
	explored before $v_{p_{j'+1}}$. However, when exploring $v_i = v_{q_{s}}$, 
	$v_{i^\ast}$ had
	already been colored green. This is only possible if $p_{j'+1} = i^\ast$, 
	and 
	hence 
	$v_{i^\ast}$  has already been colored green during the exploration of 
	$v_{p_{j'}}$.
	
	Now let us consider the exploration of $v_{q_{s-1}}$. At this 
	point $q_{s-1}$ is the smallest green index. Furthermore the above 
	arguments 
	show that during 
	the exploration of $v_{q_{s-1}}$  the vertex $v_{i^\ast}$ has already been
	colored green.
	Hence, the second  lowest green  index is less or equal to $i^\ast$. With 
	(i) 
	this implies that the index of
	any  current neighbor of 
	$v_{q_{s-1}}$ is less than or equal to $i^\ast$. Due to the separation 
	property 
	with 
	the edge 
	$\{v_{i},v_{i^\ast}\}$, there can't be a neighbor of 
	$v_{q_{s-1}}$ between $v_{i}$ and $v_{i^\ast}$, therefore $v_{i}$ is the 
	current neighbor of 
	$v_{q_{s-1}}$ with the largest index ( the actual  $i_\ell$ in  the 
	algorithm).
	Note that
	$v_{i^\ast}$ can not   be a neighbor of $v_{q_{s-1}}$ because $i$ was 
	chosen 
	minimal. Furthermore, $i^\ast$ has to 
	be the green vertex with the second smallest index also during the 
	exploration 
	of 
	$v_{q_{s-1}}$ (the actual $i^\ast$ in the algorithm), because if there 
	would be another 
	green vertex 
	between $i$ and $i^\ast$, this vertex would remain green and during the 
	exploration of $v_i$ 
	this vertex and not $i^\ast$ would be the vertex with the second smallest 
	index. But this 
	implies that the edge between $v_{i}$ and $v_{i^\ast}$ is deleted in 
	line~\ref{alg:deleteEgdgeBetweenLastNeighborAndSecondSmallesGreen} of 
	\textproc{UpdateGraph} 
	and therefore the edge $\{v_i,v_{i^\ast}\}$ is not present in~$G'$ anymore 
	when 
	$v_i$ is 
	explored, which contradicts the definition of $v_{i^\ast}$. 
\end{proof}

%In \autoref{sec:OuterplanarGraphs:Proofs} we prove that 
%\autoref{outerplanarB2m:construct} is
%well-defined and constructs indeed a $B_2^m$-EPG representation of any
%connected outerplanar 
%graph $G$. 

Using \autoref{welldefined:lemma}
we obtain the following main result of this section.
\begin{theoremMy}
\label{out_in_b2m}
Every   outerplanar graph is in $B_{2}^{m}$.
\end{theoremMy}
\begin{proof}%[\bf Proof of \autoref{out_in_b2m}]
	We   show  that \autoref{outerplanarB2m:construct} constructs indeed a
	$B_2^m$-EPG representation of a connected outerplanar graph $G$ with a nice
	labeling of its vertices $V=\{1,2,\ldots,n\}$.  
	By \autoref{welldefined:lemma}  \autoref{outerplanarB2m:construct} is 
	well-defined and constructs exactly one path for each 
	vertex.
	Clearly every  path $P_v$  constructed by the  algorithm
	hast  at most two bends by construction. 
	
	What is left to show is that the paths $P_u$ and $P_v$ 
	corresponding to two vertices
	$u$ and $v$  intersect
	iff $\{u,v\}$ is an edge in $G$.
	%
	%Observe that  (i) is obvious  by construction. 
	%
	To prove  this, consider the exploration of $v_i$ and the   construction  
	of 
	$P_{v_{i_1}}$, \dots, $P_{v_{i_\ell}}$.  Clearly,  the 
	paths $P_{v_i}$, $P_{v_{i_1}}$, \dots, $P_{v_{i_\ell}}$ intersect iff the 
	corresponding 
	edge is in the current graph $G'$ and is deleted in the subsequent update 
	of 
	$G'$.
	Furthermore none of the paths  $P_{v_{i_1}}$, \dots, $P_{v_{i_\ell}}$
	intersects any path $P_v$ for  $v\in V\setminus\{v_{i_1}, \dots, 
	v_{i_\ell}\}$,
	except for  $v=v_i$. This is due to the fact that 
	$P_{v_{i_1}}$, \dots, $P_{v_{i_\ell}}$ are contained in the free part 
	$\mathcal{R}_{v_i}$ 
	of $P_{v_i}$.
	Hence during the exploration of $v_i$ there is a new intersection between 
	two 
	paths iff the corresponding 
	edge is in the current graph $G'$, in which case that edge  is deleted in 
	the 
	subsequent update of~$G'$.
	These arguments hold in any exploration step and this completes the proof.
	\begin{comment}
	to the whole algorithm we get  that any time the algorithm constructs a path
	$P_{u}$ the latter intersects an already constructed path $P_v$ iff   
	$\{u,v\}$
	is an edge of $G$. Then the edge  $\{u,v\}$ gets  deleted in one of the 
	updates
	of $G$.
	As already discussed above each  vertex will be explored and therefore each
	vertex  will be deleted from the 
	graph at some point together with the edges incident to it. Hence at the 
	end 
	all edges have been deleted. This implies that two paths 
	intersect iff there is an edge incident to the two corresponding vertices 
	in 
	$G$.
	\end{comment}
\end{proof}
A straightforward analysis of the time complexity of
\autoref{outerplanarB2m:construct} reveals that a  $B_2^m$-EPG
    representation of an $n$-vertex outerplanar graph  can be 
    constructed in 
    $O(n)$ time. 
    %\smallskip
    
Finally, observe that $2$ is  a tight upper
bound on the monotonic bend number of outerplanar graphs, because $2$ is  a 
tight upper bound on
the bend number of outerplanar graphs as mentioned at the beginning of this
section.

%-------------------------------------------------------------
\section{The (monotonic) bend number  of the \texorpdfstring{$n$}{n}-sun}
\label{sec:nSun}
The $n$-sun graph $S_n$ is a graph of order $2n$ defined as follows.  
\begin{definition}
    \label{def: Sn}
    Let $n\in \nz$, $n \geqslant 3$. The $n$-sun graph $S_{n} = (V,E)$ is the
    graph with vertex set
 $V=\{x_{1},\dots,x_{n},y_{1},\dots,y_{n}\}$ and edge set  $E= E_{1} \cup
 E_{2}$, where  $E_{1} = \{\{x_{i},x_{j}\} \mid 1 \leqslant i < j \leqslant
 n\}$ and 
$E_{2} = \{\{x_{i},y_{i}\}, \{x_{i+1},y_{i}\}\mid 1 \leqslant i < n\} \cup
\{\{x_{1},y_{n}\}, \{x_{n},y_{n}\}\}$.
The vertices $\{x_{1},x_{2},\ldots, x_{n}\}$  are called
\emph{central vertices} of  $S_n$ 
and the edges between them, i.e.\ the edges in $E_1$,  are called \emph{central
  edges}  of  $S_n$.
\end{definition}
  A picture of $S_{3}$ can be found in \autoref{fig:s3_m2_m3} and $S_{n}$ is depicted
  in \autoref{fig:SnInB2m}~(a).
  \begin{comment}
 The graph $S_{3}$ was used to prove the strict inclusion  $B_{1}^{m} 
 \subsetneqq B_{1}$,
 in particular
\end{comment}
Golumbic, Lipshteyn and  Stern~\cite{startpaper} 
 have shown that $S_{3}$ is in $B_{1}$ and that  $S_{n}$ is not in $B_{1}$ for every $n\geqslant 4$. Cameron, Chaplick and
 Ho\`{a}ng~\cite{E} have  shown that $S_{3}$ is not in $B_{0}$ and 
 not in $B_{1}^{m}$.
 %
 \begin{comment}
\begin{figure}[tb]
    \centering
    \begin{minipage}[b]{0.45\linewidth}
        \centering
        \input{Bilder/S_3}
    \end{minipage}
    \caption{The graph $S_{3}$.}
    \label{fig:s3}
\end{figure}
\end{comment}
\begin{comment}
Now  let us turn to the $n$-sun $S_n$.
}
Golumbic, Lipshteyn and Stern~\cite{G} showed that 
$S_{n}$ is not in $B_{1}$ for every $n\geqslant 4$.
\end{comment}
It can be easily checked that \autoref{fig:SnInB2m}(b) depicts  a  $B_{2}^{m}$-EPG representation of $S_n$ for
   any $n\geqslant 3$. This implies the following theorem. 
\begin{theoremMy}
    \label{snInB2m}
    The $n$-sun $S_{n}$  is in $B_{2}^{m}$ for all $n \geqslant 3$.
\end{theoremMy}

\begin{figure}[tb]
    \centering
    \begin{minipage}[b]{0.41\linewidth}
        \centering
        \begin{center}
\begin{tikzpicture}
  [scale=.5]

\def \n {10}
\def \nMinus {6}
\def \nToDraw {7}
\def \radius {3cm}
\def \margin {8} % margin in angles, depends on the radius

\foreach \s in {1,...,\nMinus}
{
    
    \draw ({360/\n * (\s - 1/2)}:1.5*\radius) 
    -- ({360/\n * (\s )}:\radius);
    
    \draw ({360/\n * (\s - 1/2)}:1.5*\radius) 
    -- ({360/\n * (\s -1 )}:\radius);    
    
    \foreach \t in {\s,...,\nMinus}
    {
      \draw ({360/\n * (\s - 1)}:\radius) 
       -- ({360/\n * (\t )}:\radius);
    }

  \node[vertex] at ({360/\n * (\s - 1)}:\radius) {$x_{\s}$};
  \node[vertex] at ({360/\n * (\s - 1/2)}:1.5*\radius) {$y_{\s}$};    
}

\draw[dotted] ({360/\n * (\nToDraw - 1)}:\radius) arc ({360/\n * (\nToDraw - 1)}:{360}:\radius);

  \node[vertex] at ({360/\n * (1 - 1)}:\radius) {$\textit{ }$};
  \node[right] at ({360/\n * (1 - 1)-3}:\radius + 10) {$x_{n-2}$};  
  \node[vertex] at ({360/\n * (2 - 1)}:\radius) {$\textit{ }$};
  \node[right] at ({360/\n * (2 - 1)+6}:\radius + 6) {$x_{n-1}$};

  \node[vertex] at ({360/\n * (3 - 1)}:\radius) {$x_{n}$};
  \node[vertex] at ({360/\n * (4 - 1)}:\radius) {$x_{1}$};
  \node[vertex] at ({360/\n * (5 - 1)}:\radius) {$x_{2}$};
  \node[vertex] at ({360/\n * (6 - 1)}:\radius) {$x_{3}$};
  \node[vertex] at ({360/\n * (7 - 1)}:\radius) {$x_{4}$};

  \node[vertex] at ({360/\n * (1 - 1/2)}:1.5*\radius) {$\textit{ }$};
  \node[below] at ({360/\n * (1 - 1/2)-5}:1.6*\radius) {$y_{n-2}$};

  \node[vertex] at ({360/\n * (2 - 1/2)}:1.5*\radius) {$\textit{ }$};
    \node[above] at ({360/\n * (2 - 1/2)+2}:1.6*\radius) {$y_{n-1}$};
  
  \node[vertex] at ({360/\n * (3 - 1/2)}:1.5*\radius) {$y_{n}$};
  \node[vertex] at ({360/\n * (4 - 1/2)}:1.5*\radius) {$y_{1}$};
  \node[vertex] at ({360/\n * (5 - 1/2)}:1.5*\radius) {$y_{2}$};
  \node[vertex] at ({360/\n * (6 - 1/2)}:1.5*\radius) {$y_{3}$};

\end{tikzpicture}
\end{center}
        (a)
    \end{minipage}
    \quad
    \begin{minipage}[b]{0.52\linewidth}
        \centering
        \begin{center}
\begin{tikzpicture}
  [scale=.5]

\def \n {10}
\def \nMinus {9}
\def \nEnd {4}
\def \nEndMinus {3}
\def \nStart {8}

\def \vspace {0.1}

\def \nToDraw {7}
\def \radius {3cm}
\def \margin {8} % margin in angles, depends on the radius

\foreach \s in {1,...,\nEnd}
{
    
    \draw ({-\vspace * \s },0) -- ({-\vspace*\s},\s) -- ({\s + 1},\s)  node [below,pos=0.3]  {$P_{x_{\s}}$};
}

\foreach \s in {\nStart,...,\nMinus}
{
    
    \draw ({-\vspace * \s },0) -- ({-\vspace*\s},\s) -- ({\s + 1},\s)  node [below,pos=0.3]  {\pgfmathparse{\n - \s }%
    $P_{x_{n - \pgfmathprintnumber{\pgfmathresult}}}$};
}
    
   \draw ({-\vspace * \n },0) -- ({-\vspace*\n},\n) -- ({\n + 1},\n)  node [below,pos=0.3]  {$P_{x_{n}}$};

\foreach \s in {1,...,\nEndMinus}
{
    \draw (\s,{\s + \vspace}) -- ({\s + 0.5},{\s + \vspace}) -- ({\s + 0.5},{\s + 1 - \vspace}) --  ({\s + 1},{\s + 1 - \vspace}) node [below,pos=0.99]  {$P_{y_{\s}}$};
    
}

\foreach \s in {\nStart,...,\nMinus}
{
    \draw (\s,{\s + \vspace}) -- ({\s + 0.5},{\s + \vspace}) -- node [right,pos=0.45]  {\pgfmathparse{\n - \s }%
    $P_{y_{n - \pgfmathprintnumber{\pgfmathresult}}}$} ({\s + 0.5},{\s + 1 - \vspace}) --  ({\s + 1},{\s + 1 - \vspace});
    
}    

 \draw (0,{1 + \vspace}) -- ({0.3},{1 + \vspace}) -- node [right,pos=0.95]  {$P_{y_{n}}$} ({ 0.3},{\n - \vspace})  --  ({1},{\n - \vspace});

\draw[dotted] (0.3*\nEnd + 2*\vspace,\nEnd+ 2*\vspace) -- (0.3*\nStart - 2*\vspace,\nStart - 11*\vspace);

\draw[dotted] (\nEnd + 2*\vspace,\nEnd+ 2*\vspace) -- (\nStart - 2*\vspace,\nStart - 2*\vspace);

\draw[dotted] (- \nEnd*\vspace,0.3) -- (- \nStart*\vspace,0.3);
  
\end{tikzpicture}
\end{center}
        (b)
    \end{minipage}
    \caption{(a) The graph $S_{n}$. (b) A $B_{2}^{m}$-EPG representation of $S_{n}$.}
    \label{fig:SnInB2m}
\end{figure}
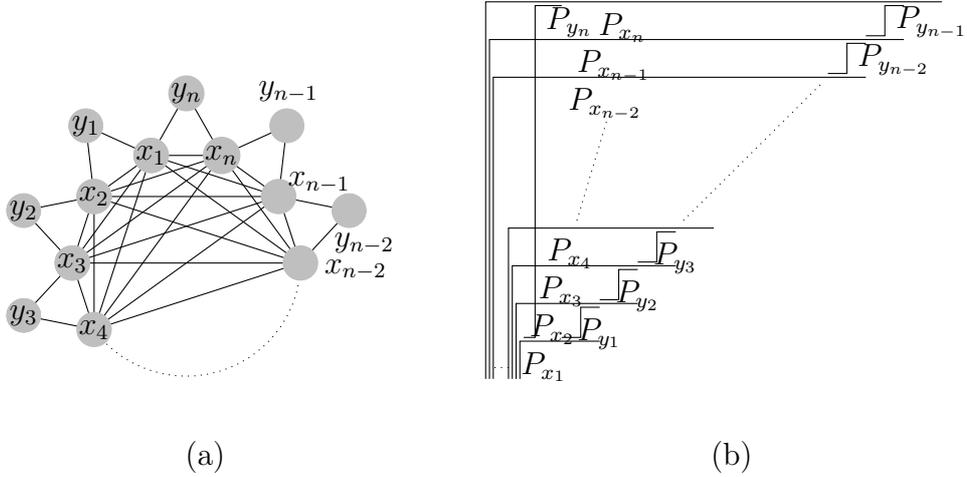
It is easily seen  that the $B_2^m$-representation  of $S_{n}$  in
\autoref{fig:SnInB2m}(b) can be constructed in $O(n)$ time.
We summarize the results for the $n$-sun as follows.
\begin{corollary}
 The (monotonic) bend number of $S_{n}$, $n\geqslant 3$, is given as follows.
$$ b(S_n)=\left \{ \begin{array}{ll}1& \mbox{for } n=3\\2 & \mbox{for 
}n\geqslant
    4\end{array} \right . \quad   %\mbox{ and } 
    \qquad
b^m(S_n)=2 \mbox{ for } n\geqslant 3$$
\end{corollary}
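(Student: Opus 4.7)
The plan is to read off both equalities by combining the four results immediately preceding the corollary, since the corollary is explicitly billed as a summary. The only actual work is noting which inclusions give upper vs.\ lower bounds and handling the cases $n=3$ and $n\geqslant 4$ separately for $b(S_n)$.

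For $b(S_3)$: the upper bound $b(S_3)\leqslant 1$ is \autoref{S3_in_B1} ($S_3\in B_1$), and the matching lower bound $b(S_3)\geqslant 1$ is the first half of \autoref{s3_nin_b0} ($S_3\notin B_0$). For $b(S_n)$ with $n\geqslant 4$: \autoref{s4+_nin_b1} gives $S_n\notin B_1$, so $b(S_n)\geqslant 2$; the upper bound $b(S_n)\leqslant 2$ follows from \autoref{snInB2m} ($S_n\in B_2^m$) together with the trivial inclusion $B_2^m\subseteq B_2$.

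For $b^m(S_n)$: the upper bound $b^m(S_n)\leqslant 2$ for every $n\geqslant 3$ is exactly \autoref{snInB2m}. For the lower bound, when $n=3$ we use the second half of \autoref{s3_nin_b0} ($S_3\notin B_1^m$) together with $S_3\notin B_0=B_0^m$, yielding $b^m(S_3)\geqslant 2$. For $n\geqslant 4$, \autoref{s4+_nin_b1} gives $S_n\notin B_1$, and since $B_1^m\subseteq B_1$ and $B_0^m=B_0\subseteq B_1$, also $S_n\notin B_1^m$ and $S_n\notin B_0^m$, so $b^m(S_n)\geqslant 2$.

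There is no real obstacle: the corollary is a pure bookkeeping exercise once one is careful to invoke $B_k^m\subseteq B_k$ (which the excerpt already states) to pass $B_2^m$ representations to $B_2$ representations and to pass non-membership in $B_1$ to non-membership in $B_1^m$. I would therefore present the argument as a single short paragraph organized by a two-row table of upper/lower bounds, one row for $b$ and one for $b^m$, with the case split $n=3$ vs.\ $n\geqslant 4$ only where it is needed (namely in the lower bound for $b$).
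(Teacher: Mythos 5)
Your proposal is correct and matches the paper exactly: the paper gives no separate proof but states the corollary as a summary of \autoref{S3_in_B1}, \autoref{s3_nin_b0}, \autoref{s4+_nin_b1} and \autoref{snInB2m}, which is precisely the bookkeeping you carry out (your extra invocations of $S_3\notin B_0$ are redundant given $B_0^m\subseteq B_1^m$, but harmless).
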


Next, we recall the following definitions 
used in the investigation of $S_n$.
%of an edge clique and a claw clique as
%introduced 
%by .
%
\begin{definition}[Golumbic, Lipshteyn and Stern~\cite{startpaper}]
	Consider some graph $G$ with  a $B_{1}$-EPG representation and some grid
	edge $e$. 
	The set of all paths which  use  $e$ in the  $B_{1}$-EPG representation
	is called an \emph{edge clique}. 
	Consider a  copy of the claw graph $K_{1,3}$ in the grid, i.e.\ a grid point
	together with three (arbitrarily selected but fixed) grid edges which have 
	that
	grid point as a vertex. The set of all paths
	that use $2$ edges of this copy of the  claw $K_{1,3}$ is called a 
	\emph{claw
		clique}.
	The grid point of degree $3$ in  the claw $K_{1,3}$ is called \emph{central 
		vertex of the claw} and \emph{central grid point of the claw clique}. 
\end{definition}
Clearly,  the vertices of $G$  corresponding to the paths of an edge
clique 
form a clique. Also, the vertices of $G$ corresponding to the paths of a claw
clique form a clique. 
In fact,
%Golumbic, Lipshteyn, Stern~\cite{startpaper} have 
%shown  that 
a converse statement  is
also true.
\begin{lemmaMy}[Golumbic, Lipshteyn, Stern~\cite{startpaper}]
	\label{MaxCliqueIsEdgeOrClawClique}
	Let $G$ be a $B_1$-EPG graph.
	Then in every $B_{1}$-EPG representation of $G$, the paths corresponding to
	the vertices of a  maximal clique in 
	$G$ form  either  an 
	edge clique or  a claw clique. 
\end{lemmaMy}
In general we  say that   \emph{a set $X$ of vertices in  $G$
	corresponds to an edge clique (a claw clique)} in a $B_1$-EPG 
representation of
$G$ iff   the
paths corresponding to the vertices of $X$ build an edge clique (a claw
clique) in the  $B_1$-EPG representation. 

We close this section with a  simple but  useful observation on 
$S_3$, that will be used in \autoref{sec:outerplanar_triangulations}.
%is due to Biedl,  
%Stern~\cite{C}.
%
\begin{observation}[Biedl, Stern~\cite{C}]
	\label{s3_claw}
	In every $B_{1}$-EPG representation of the graph $S_{3}$ the set of  the  
	central
	vertices 
	$\{x_{1},x_{2},x_{3}\}$ corresponds to a  claw clique. The paths
	corresponding to different vertices contain different pairs of edges of the
	claw, hence the central (grid) point of the claw clique  is a bend
	point  for exactly two of  the three paths corresponding to
	$\{x_{1},x_{2},x_{3}\}$.
	In this case we say that these two \emph{paths are bent in the 
		claw}.
\end{observation}

%-------------------------------------------------------------
%%%\clearpage
\section{The (monotonic) bend number of maximal  outerplanar graphs}
\label{sec:outerplanar_triangulations}
%-------------------------------------------------------------------
% maximal outerplanar graphs
%-------------------------------------------------------------------

In this section we  determine the bend number and the monotonic bend number 
of maximal outerplanar graphs.
\begin{definition}
A graph $G$ is called \textit{maximal outerplanar} if (a) $G$  is outerplanar
and (b)
joining any two non-adjacent vertices of $G$ by  an additional  edge  yields a graph which
is not outerplanar anymore.
\end{definition}
 Notice that maximal outerplanar graphs are closely related to
triangulations.
Indeed, it is easy to see that maximal outerplanar graphs are exactly those 
outerplanar
 graphs, 
where the boundary of the outer face is a Hamiltonian cycle (thus  containing
all vertices)   and every inner face is a triangle.
We make use of the following auxiliary graph.

\begin{definition}
    Let $G$ be a maximal outerplanar graph with an arbitrary but fixed outerplanar embedding. 
    Then the \textit{almost-dual graph
        $\widehat{G}$} of $G$  
    has a vertex for every inner face of $G$.
    Two vertices of $\widehat{G}$ are adjacent if and only if the corresponding
    inner faces of $G$ share an edge in $G$. 
  \end{definition}

Clearly, the almost-dual  $\widehat{G}$ of $G$  is an induced 
subgraph of the
planar dual of~$G$. 
\autoref{fig:const_out_tri_in_b0}(a) represents a maximal outerplanar graph
with $8$ vertices and its almost-dual graph. Observe that in this 
case the almost-dual graph is a path.
Indeed, the following observation is easy to see.
%Indeed,  it is can be easily 
%observed   that  almost-dual graphs  of  maximal
%outerplanar graphs have a special structure. In particular, we distinguish
%maximal outerplanar graphs which . We call such graphs $S_3$-free. 
%
\begin{observation}
    \label{obs:widehatGtreeAndDegree3}
    The almost-dual     graph of any     maximal outerplanar graph 
    $G$  is a tree with maximum degree at 
    most~$3$. This 
    tree is a path iff $G$ does not contain $S_3$ as an induced
    subgraph, that is, $G$ is \emph{$S_3$-free}.
\end{observation}

Finally, we use the following notation. For any vertex $\hat{v}$ of 
$\widehat{G}$ denote by~$T_{\hat{v}}$  the 
subgraph of $G$ induced by the vertices on the border of the triangular face of
$G$ associated to $\hat{v}$  and by
$V(T_{\hat{v}})$ the corresponding set of vertices.
%-------------------------------------------------------------------
% maximal outerplanar graphs in B_{0}
%-------------------------------------------------------------------
\subsection{Maximal Outerplanar Graphs in \texorpdfstring{$B_{0}$}{B0}}
In this section we characterize  maximal outerplanar graphs which belong 
to~$B_{0}$. 
Cameron, Chaplick, Ho\`{a}ng~\cite{E} showed that  $S_{3}$ is not in $B_{0}$. 
Thus, any maximal outerplanar graph 
which is not  $S_3$-free 
is not in $B_{0}$. We show by construction
that the non-trivial converse of this statement is also true:
any $S_3$-free  maximal outerplanar graph  is in $B_{0}$. 
We first prove that \autoref{alg: b0 con out} constructs a 
$B_0$-EPG
representation of an $S_3$-free maximal outerplanar graph.
\begin{algorithm}[tb]
	\footnotesize
	\caption{Construct a $B_0$-EPG representation of an $S_3$-free maximal 
	outerplanar
		graph $G$}
	\label{alg: b0 con out}
	\Input{An $S_3$-free  maximal outerplanar  graph  $G=(V,E)$ and 
	its almost-dual 
	path  $\widehat{G}$}
	\Output{A $B_0$-EPG representation of $G$}
	\begin{algorithmic}[1]
		\Procedure{$B_0$\_$S_3$free\_Max\_Outerplanar}{$G$, $\widehat{G}$}
		%\Procedure{Assignment}{$G$}
		%\Procedure{Explore}{$v_i$}
		%\Procedure{a}{b}
		\State Let $\hat{v}_{1}, \dots, \hat{v}_{\ell}$ be the consecutive 
		vertices of the path
		$\widehat{G}$ \label{alg: b0 con out def vhat}
		\longState{Label one horizontal line of the grid with $1$ and $\ell+1$ 
			vertical lines of the grid with $1, 2, \ldots, \ell+1$}
		\longState{Let
			$(i,1)$ be the grid point where the one horizontal and
			the $i$-th vertical grid line intersect,   $1\leqslant i\leqslant 
			{\ell+1}$}
		%\For{$v \in G$}
		%\State Let $\mathcal{\widehat{V}}_v := \{\hat{v}_i \in V(\widehat{G}): 
		%1 \leqslant i \leqslant \ell  \text{ and } v \in V(T_{\hat{v}_i})\}$
		%\State Let $i_s := \min\{i: \hat{v}_i \in \mathcal{\widehat{V}}_v\}$
		%\State Let $i_e := \max\{i: \hat{v}_i \in \mathcal{\widehat{V}}_v\}$ 
		%\label{alg: b0 con out def ie}
		%\State Draw $P_v$ as straight line connecting $(i_s,1)$ and $(i_e+1,1)$
		
		%\EndFor
%		\State{\sout{Set $P_v=\emptyset$ for all vertices $v$ of 
%		$G$}}\label{initialize_path}  
%		\For{\sout{$i = 1, \dots, \ell$}}
%		\State{\sout{Let $\{v_{i_1}, v_{i_2}, v_{i_3}\}$ be the three vertices 
%		of 
%			$V(T_{\hat{v}_i})$}}
%		\longState{%
%			\sout{
%			Add  the grid edge from $(i,1)$ to
%			$(i+1, 1)$ to each of the paths $P_{v_1}$,  $P_{v_2}$ and 
%			$P_{v_3}$}}\label{add_edge_to_path}
%		\EndFor
		\For{$v \in V$}
		\longState{
		Draw the path $P_v$ from $(a,1)$ to $(b+1,1)$, where $\hat{v}_a$ is the 
		first 
		and 
		$\hat{v}_b$ is the last vertex in the path
                  $\widehat{G}$ such that $v \in 
		V(T_{\hat{v}_a})$ and $v \in 
		V(T_{\hat{v}_b})$}
		\EndFor	
		\EndProcedure
	\end{algorithmic}
\end{algorithm}
\begin{lemmaMy} \label{lem: b0 max out algo}
	Let $G$ be an $S_3$-free maximal outerplanar graph and 
	let $\widehat{G}$ be its almost-dual graph.
	Then \autoref{alg: b0 con out} constructs a $B_0$-EPG representation of $G$.
\end{lemmaMy}
\begin{proof}
	First notice  that due to \autoref{obs:widehatGtreeAndDegree3} 
	$\widehat{G}$ is a 
	path as required by the input of \autoref{alg: b0 con out}.
%	\sout{Next observe a particular property of} \autoref{alg: b0 con out}:  
%	\sout{the
%	paths representing the vertices of $G$ on the grid  are considered as sets 
%	of
%	grid edges and are  constructed step by
%	step by adding one edge at a time to each path. The paths are formally
%	initialized as empty sets in line~\ref{initialize_path} and then edges are 
%	added
%	to them in line~\ref{add_edge_to_path}. }
	Next observe that since  $G$ is maximal outerplanar, 
		%$G$  is connected and 
	each of its
		vertices   is contained  in at least one triangle and thus in at least 
		one vertex of the path $\widehat{G}$. 
		Hence \autoref{alg: b0 con out} draws exactly one path $P_v$ for every 
		vertex $v$ of $G$.
	In order to prove the lemma we show that 
%	\sout{ (i) for each vertex of $G$  }
%	\autoref{alg: b0 con out}
%	\sout{constructs one horizontal path on the grid to represent it,} 
	(i) for  each vertex $w \in
	V(G)$ the set of vertices $\hat{v}$ of $\widehat{G}$, for which $w\in
	V(T_{\hat{v}})$ holds,  build a subpath of $\widehat{G}$
	and (ii)  
	the 
	paths
	on the grid $P_u$ and $P_v$ representing two vertices $u$ and $v$ of $G$ 
	intersect iff $\{u,v\}$
	is an edge in $G$.

	\medskip
	Proof of (i). 
%	\sout{
%	Since  $G$ is maximal outerplanar, $G$  is connected and each of its
%	vertices   is contained  in at least one triangle. 
%	Hence for every vertex $v$ of $G$ }
% 	\autoref{alg: b0 con out} 
% 	\sout{adds at least
%	one edge to $P_v$. Moreover the edges added to $P_v$ (which lie all on the
%	same horizontal line of the grid)  
%	build indeed a horizontal path on the grid  because for  each vertex $v \in
%	V(G)$ the set of vertices $\hat{v}$ of $\widehat{G}$, for which $v\in
%	V(T_{\hat{v}})$ holds,  build a subpath of $\widehat{G}$.
%	Thus } 
%	\autoref{alg: b0 con out} 
%	\sout{constructs
%	exactly  one horizontal path for each vertex $v \in G$.
%	}
	Assume that (i) does not hold, and consider a   $w\in
	V(G)$, for which there exist $j$ and $k
        \geqslant j + 2$  with
	$w\in V(T_{\hat{v}_j})$ and $w\in V(T_{\hat{v}_k})$, but 
	$w\not\in V(T_{\hat{v}_i})$ for any 
	$j + 1 \leqslant i \leqslant k -1$. It is easy to see that 
	for any two consecutive 
	vertices $\hat{v}_i$ and $\hat{v}_{i+1}$ on the path $\widehat{G}$, 
	the triangles $T_{\hat{v}_i}$ and 
	$T_{\hat{v}_{i+1}}$ share 	two vertices in $G$, and every other vertex of 
	$G$ is only in 
	$T_{\hat{v}_r}$ with either $r \leqslant i$ or $r \geqslant i+1$.
	Thus, because $w\in V(T_{\hat{v}_j})$ and $w\not\in V(T_{\hat{v}_{j+1}})$, 
	$w$ can only be in $T_{\hat{v}_r}$ with $r \leqslant j$.
	Furthermore, because $w\in V(T_{\hat{v}_k})$ and $w\not\in 
	V(T_{\hat{v}_{k-1}})$, $w$ can only be in $T_{\hat{v}_r}$ with $r 
	\geqslant k$, a contradiction.
	
	\medskip
	Proof of (ii). 
	Consider 
	an  edge $\{u,v\}$  of $G$. Clearly $\{u,v\}$  is part of at least one
	triangle
	$T_{\hat{v}_i}$ for some $i\in \{1,2,\ldots,\ell\}$. This implies that  
	$P_u$ 
	and $P_v$ share the grid edge connecting $(i,1)$ and $(i+1,1)$. 
	Hence $P_u$ and $P_v$ intersect.
	
	%    Then clearly \autoref{alg: b0 con out} constructs (potentially many 
	%separate parts) path for each vertex $v \in G$. Hence in order to show 
	%that 
	%the 
	%resulting paths are a $B_0$-EPG representation of $G$ it is enough to show 
	%that 
	%two vertices $v$ and $u$ of $G$ are adjacent if and only if $P_v$ and 
	%$P_u$ 
	%are 
	%intersecting.
	
	%    Let $\widehat{G}$ be the almost-dual of $G$ and let $\hat{v}_{1}, 
	%\dots, 
	%\hat{v}_{\ell}$ as defined in \autoref{alg: b0 con out} in line~\ref{alg: 
	%b0 
	%con out def vhat}.
	%    It is easy so see that each vertex $v \in V(G)$ is part of consecutive 
	%vertices of $\widehat{G}$, so as soon as $v \in V(T_{\hat{v}_i'})$ and $v 
	%\in 
	%V(T_{\hat{v}_i''})$ for some $i' \leqslant i''$, then $v \in 
	%V(T_{\hat{v}_i})$ 
	%for all $i' \leqslant i \leqslant i''$. In particular this means that 
	%$\mathcal{\widehat{V}} = \{ \hat{v}_{i_s}, \hat{v}_{i_s+1}, \dots,  
	%\hat{v}_{i_e}  \}$ in line~\ref{alg: b0 con out def ie}.
	
	%   If the vertices $u$ and $v$ of $G$ are adjacent, this implies that 
	%there is 
	%a (not necessarily unique) $i^{\ast}$ such that $\{u,v\} \in 
	%V(T_{\hat{v}_{i^{\ast}}})$. Then $\hat{v}_{i^{\ast}} \in 
	%\mathcal{\widehat{V}}_v$ and $\hat{v}_{i^{\ast}} \in 
	%\mathcal{\widehat{V}}_u$, 
	%hence the paths $P_v$ and $P_v$ share the grid edge from $(i^{\ast},1)$ to 
	%$(i^{\ast}+1,1)$ and therefore the paths are intersecting.

	Assume now that  $P_v$ and $P_u$ intersect, so $P_u$ and $P_v$ share some  
	grid
	edge~$e$ from $(k,1)$ to $(k+1,1)$ for some $k\in \{ 1,2,\ldots, \ell\}$.
	By construction, this implies that $v$ is in $V(T_{\hat{v}_a})$ 
	for some $a \leqslant k$ and in  $V(T_{\hat{v}_b})$ for some $b \geqslant 
	k$.  Then, (i) 
        implies that $v \in V(T_{\hat{v}_k})$. 
	Analogously, $u \in V(T_{\hat{v}_k})$ holds.
%	\sout{
%	Since adding  grid edges to the paths
%	happens exclusively during the for loop of  }
%	\autoref{alg: b0 con out}, 
%	\sout{also 
%	$e$ 
%	has been added during this 
%	loop, more precisely for $i=k$.
%	This implies that  $u\in V(T_{\hat{v}_k})$  and $v\in V(T_{\hat{v}_k})$.}
	Hence
	$u$ and $v$ are adjacent.
\end{proof}

With the help of \autoref{lem: b0 max out algo} 
we show that the following theorem holds.
\begin{theoremMy}
	\label{out_tri_in_b0}
	Let $G$ be a maximal outerplanar graph. Then $G$ is in $B_{0}$ if and only 
	if $G$ is  $S_{3}$-free.
\end{theoremMy}
\begin{proof}%[\bf Proof of \autoref{out_tri_in_b0}]
	Let $G$ be a maximal outerplanar graph. 
	If $G$ is not  $S_{3}$-free, then $G$ is not in $B_{0}$ because of 
	Cameron, Chaplick, Ho\`{a}ng~\cite{E}. % proved that $S_{3}$ is not in 
	%$B_{0}$.
	%On the other hand, 
	If $G$ is $S_3$-free, then 
	\autoref{alg: b0 con out} constructs a $B_0$-EPG representation as stated in
	\autoref{lem: b0 max out algo}.
	Hence, $G$ is  in $B_0$.
\end{proof}
%
%\vspace*{0.4cm}

%In particular, \autoref{alg: b0 con out} constructs a 
%$B_0$-EPG representation for such graphs.
\autoref{out_tri_in_b0} and 
\autoref{obs:widehatGtreeAndDegree3} imply that it can be decided in $O(n)$ time 
whether  a maximal outerplanar graph $G$ is in $B_0$. Further, it is not 
difficult to
see   that in the positive case the construction of a $B_0$-EPG representation
in \autoref{alg: b0 con out}
can be done in $O(n)$ time for a graph of order~$n$.
%RENI: Das würde bedeuten dass wir den Algo so modifizieren können dass er
%auch für einen max. outerplanaren Graphen mit einem
%induziertem  $S_3$ läuft und in diesem Fall etwa den Output ``there is no
%$B_0$-EPG representation'' ausgibt. Stimmt das? Wenn ja, dann sollten wir den
%algorithmus tatsächlich so umschreiben. Anderseits kann man den dualen einer
%Triangulierung in linearer Zeit konstruieren (ich glaube das gilt habe jetzt
%aber keine Referenz parat) . Dann würde man auch den quasi
%dualen in linearer Ziet konstruieren könne. Dann  würde man  in linearer Zeit
%überprüfen können ob der quasi dualer in Pfad ist. Daraus folgt, dass  die
%Überprüfung  der B_0-EPG Zugehörigkeit ohnehin in linearer Zeit erfolgt. Das
%heißt, das Novum unseres Alg. besteht in der Konstruktiond er B_0-EPG
%Representation in linearer Zeit. Dann sollte m man das auch so
%formulieren. Was denkst Du?   

%We close this subsection with the following observation on the time  complexity
%of \autoref{alg: b0 con out}.
%\begin{corollary}\label{maxoutplanarB0:linTime}
%Let $G$ be a maximal  outerplanar graph with $n$ vertices.
% It can be decided in $O(n)$ time  whether  $G$ has a $B_0$-EPG 
%%%representation. If  a $B_0$-EPG
%representation of $G$ exists, then it can be constructed in $O(n)$ time. 
%\end{corollary}

%\vspace*{0.4cm}
\begin{figure}[tb]
\centering
\begin{minipage}[b]{0.42\linewidth}
\centering
\begin{center}
\begin{tikzpicture}
  [scale=.65]
  
  \node[vertex] (1) at (1,3) {$1$};
  \node[vertex] (2) at (2,5) {$2$};
  \node[vertex] (3) at (3,1) {$3$};
  \node[vertex] (4) at (4,5) {$4$};
  \node[vertex] (5) at (6,5) {$5$};
  \node[vertex] (6) at (5,1) {$6$};
  \node[vertex] (7) at (7,1) {$7$};
  \node[vertex] (8) at (9,1) {$8$};
  \node[vertex] (9) at (8,5) {$9$};

  \node[vertex3] (d1) at (2,3) {};
  \node[vertex3] (d2) at (3,3) {};
  \node[vertex3] (d3) at (4,3) {};
  \node[vertex3] (d4) at (5,3) {};
  \node[vertex3] (d5) at (6,3) {};
  \node[vertex3] (d6) at (7,3) {};
  \node[vertex3] (d7) at (8,3) {};
  
  \node[above] (a1) at (2,3) {$\hat{v}_{1}$};
  \node[above] (a2) at (3,3) {$\hat{v}_{2}$};
  \node[above] (a3) at (4.1,3) {$\hat{v}_{3}$};
  \node[below] (a4) at (4.8,3) {$\hat{v}_{4}$};
  \node[below] (a5) at (6,3) {$\hat{v}_{5}$};
  \node[below] (a6) at (7.2,3) {$\hat{v}_{6}$};
  \node[above] (a7) at (7.9,3) {$\hat{v}_{7}$};

  \foreach \from/\to in {1/2,2/4,3/6,6/7,7/8,4/5,1/3,2/3,3/4,3/5,6/5,7/5,8/5,8/9,5/9}
  \draw (\from) -- (\to);
  
    \foreach \from/\to in {d1/d2,d2/d3,d3/d4,d4/d5,d5/d6,d6/d7}
  \draw[dashed] (\from) -- (\to);

\end{tikzpicture}
\end{center}
(a)
\end{minipage}
\quad
\begin{minipage}[b]{0.5\linewidth}
\centering
\begin{center}
\begin{tikzpicture}
  [scale=.68]

\def \gridUp {-0.35}
  
  \draw[step=1cm,color=gray!25,line width=7pt] (0.5,0.5) grid (8.5,1.5);

  \draw (1,1+\gridUp) -- (2,1+\gridUp) node[below,pos=0.5] {$P_{1}$};   
  \draw (1,1.15+\gridUp) -- (3,1.15+\gridUp) node[below,pos=0.85] {$P_{2}$};
  \draw (1,1.3+\gridUp) -- (5,1.3+\gridUp) node[above,pos=0.1] {$P_{3}$};
  \draw (2,1.45+\gridUp) -- (4,1.45+\gridUp) node[above,pos=0.2] {$P_{4}$};
  \draw (3,1.6+\gridUp) -- (8,1.6+\gridUp) node[above,pos=0.85] {$P_{5}$};
  \draw (4,1.75+\gridUp) -- (6,1.75+\gridUp) node[above,pos=0.5] {$P_{6}$};
  \draw (5,1.45+\gridUp) -- (7,1.45+\gridUp) node[below,pos=0.2] {$P_{7}$};
  \draw (6,1.3+\gridUp) -- (8,1.3+\gridUp) node[below,pos=0.2] {$P_{8}$};
  \draw (7,1.15+\gridUp) -- (8,1.15+\gridUp) node[below,pos=0.5] {$P_{9}$}; 
  
  \node[left] at (0.5,1) {$1$};
  
  \foreach \num in {1,2,3,4,5,6,7,8}
  \node[below] at (\num,0.3+\gridUp) {$\num$};

\end{tikzpicture}
\end{center}
(b)
\end{minipage}
\caption{(a) A graph $G$ and its almost-dual $\widehat{G}$ with the vertices $\hat{v}_i$ for $1\leqslant i\leqslant 7$.
(b) A $B_{0}$-EPG representation of $G$ constructed by \autoref{alg: b0 con out}.}
\label{fig:const_out_tri_in_b0}
\end{figure}
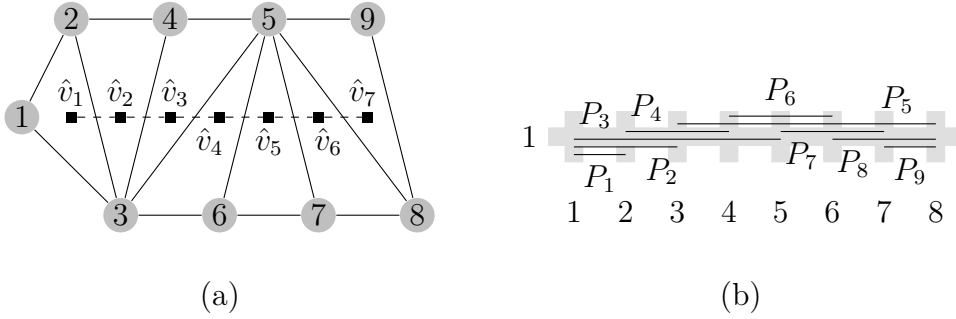

%-------------------------------------------------------------------
% maximal outerplanar graphs in B_{1}
%-------------------------------------------------------------------
\subsection{Maximal Outerplanar Graphs in \texorpdfstring{$B_{1}$}{B1}}
\label{sec:out_tri_in_b1}
In this section we characterize the maximal outerplanar graphs which are
$B_{1}^{m}$-EPG and $B_1$-EPG, respectively.  
It turns out that for maximal outerplanar graphs $B_0$-EPG and $B_1^m$-EPG 
coincide, see \autoref{cor:OutPlanarInB1m}. Thus, surprisingly, allowing two 
more shapes of paths in the EPG representation  does not 
increase the class
of graphs which can be represented. 
\begin{corollary}\label{cor:OutPlanarInB1m}
Let $G$ be a maximal outerplanar graph. Then $G$ is in $B_{1}^{m}$ if and only
if G is $S_3$-free. 
\end{corollary}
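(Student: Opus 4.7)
The plan is to derive this corollary almost immediately from the two results already proved, namely \autoref{s3_nin_b0} (which states $S_3 \not\in B_1^m$) and \autoref{out_tri_in_b0} (which characterizes maximal outerplanar graphs in $B_0$), together with the trivial inclusion $B_0 \subseteq B_1^m$ mentioned in the introduction.

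For the forward direction, I would argue by contraposition: if $G$ contains a copy of $S_3$ as an induced subgraph, then $G$ is not in $B_1^m$. The key observation is that the class $B_1^m$ is closed under taking induced subgraphs, since restricting a $B_1^m$-EPG representation of $G$ to the paths corresponding to the vertices of an induced subgraph $H$ yields a $B_1^m$-EPG representation of $H$. Hence if $G$ were in $B_1^m$, then $S_3$ would also be in $B_1^m$, contradicting \autoref{s3_nin_b0}.

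For the reverse direction, suppose $G$ does not contain a copy of $S_3$. Then \autoref{out_tri_in_b0} yields that $G$ is in $B_0$. Since every $B_0$-EPG representation is in particular a monotonic $B_1$-EPG representation (a single horizontal segment trivially has zero bends and is monotonic), we have $B_0 \subseteq B_1^m$, and therefore $G$ is in $B_1^m$.

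There is no real obstacle here, since both directions reduce to previously established statements; the only thing worth making explicit is the induced-subgraph-closure of $B_1^m$ for the forward direction, and the inclusion $B_0 \subseteq B_1^m$ for the reverse direction. The corollary can thus be proved in a few lines.
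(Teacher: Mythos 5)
Your proposal is correct and follows essentially the same route as the paper: the forward direction via \autoref{s3_nin_b0} (with the induced-subgraph closure of $B_1^m$, which the paper leaves implicit), and the reverse direction via \autoref{out_tri_in_b0} together with $B_0 \subseteq B_1^m$. No issues.
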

\begin{proof}%[\bf Proof of \autoref{cor:OutPlanarInB1m}]
	Since  $S_{3}$ is not in $B_{1}^{m}$ (as shown by Cameron, Chaplick, 
	Ho\`{a}ng~\cite{E}),  a graph which is not $S_{3}$-free 
	is not in $B_{1}^{m}$.
	Further, by \autoref{out_tri_in_b0} every $S_3$-free maximal outerplanar 
	graph
	is in~$B_{0}$ and hence also in 
	$B_{1}^{m}$.
\end{proof}

 Since $S_{3}$ is in $B_{1}$ (Golumbic, Lipshteyn, 
Stern~\cite{startpaper}) but  not in $B_{0}$ (Cameron, Chaplick, 
Ho\`{a}ng~\cite{E}) the class of $B_1$-EPG maximal outerplanar graphs  is
strictly larger than the class of $B_1^m$-EPG maximal outerplanar
graphs.
In the   sequel we characterize  the class of $B_1$-EPG
maximal outerplanar graphs.
To this end  we need  the concept of the  \emph{reduced graph} of a maximal 
outerplanar
graph. Recall the definition of central vertices and central edges of $S_3$ from \autoref{def: Sn}.
\begin{definition}
Let $G$ be a maximal outerplanar graph. The \textit{reduced graph
  $\widetilde{G}$} of $G$  is defined in the following way. 
For every copy of $S_{3}$ in $G$,   the central vertices and the central
edges of $S_{3}$ are colored  green. 
Then  all  non-colored vertices and  non-colored edges are removed  from $G$.
The resulting graph is the reduced graph $\widetilde{G}$.
\end{definition}
%
%Note that the resulting object  $\widetilde{G}$ is indeed a graph.
%, since whenever an edge is
%colored also its end vertices are colored, 
%so it can not happen that an edge is in $\widetilde{G}$ but a vertex, to which
%the edge is incident to, is not in $\widetilde{G}$.
%
The next lemma addresses  the structural relationship of maximal outerplanar graphs and their reduced and almost-dual graphs.

\begin{lemmaMy}
    \label{lem:centralVerticesS3_trianglesGTilde_degree3verticesGhat_bijection}
    Let $G$ be a maximal outerplanar graph with  reduced graph
    $\widetilde{G}$ and  almost-dual graph $\widehat{G}$. Then
    %\begin{itemize}
    %    \setlength{\itemsep}{0pt}
    %    \item 
    (i)
        the   
        copies
        of~$S_{3}$ in~$G$, 
     %   \item 
     (ii)
        the triangles in $\widetilde{G}$ and
     (iii)
       % \item 
        the vertices of degree 3 in $\widehat{G}$
    %\end{itemize}
    are in a one-to-one correspondence.
\end{lemmaMy}
\begin{proof}%[\bf Proof of 
	%\autoref{lem:centralVerticesS3_trianglesGTilde_degree3verticesGhat_bijection}]
	%RENI
	First we prove the one-to-one correspondence between (ii) the triangles in 
	$\widetilde{G}$ and (i) the  
	copies of 
	$S_3$ in $G$.
	Clearly each
	copy of $S_3$ in $G$ 
	corresponds to a triangle in $\widetilde{G}$.
	
	Now let $\{v_{1},v_{2},v_{3}\}$ be the vertices of an arbitrary triangle 
	$T$ in
	$\widetilde{G}$. 
	Notice that there are exactly two possibilities for the order in
	which edges are colored: either   all the edges of $T$ were colored at 
	once, or
	the  three edges were colored  at three different times as central edges
	belonging to three different copies of $S_3$ in $G$. 
	%Indeed,  if two of the edges of $T$ were central edges of the same copy of 
	% $S_3$ in
	%$G$,  then the third edge of the triangle would be   the third
	%central edge of that copy of $S_3$. 
	
	If all edges of $T$ were colored at once, then 
	$T$ corresponds to  the central vertices of a 
	copy of 
	$S_{3}$ in $G$.
	If $\{v_{1},v_{2}\}$, $\{v_{2},v_{3}\}$, $\{v_{1},v_{3}\}$ were colored at
	three  different times,   there exists vertices $v_4$, $v_5$ and $v_6$ such
	that    $\{v_{1},v_{2},v_{4}\}$, $\{v_{2},v_{3},v_{5}\}$
	and $\{v_{1},v_{3}, v_{6}\}$
	are sets of    central vertices of some  copy of $S_{3}$ in 
	$G$, respectively. 
	Since $G$  is outerplanar the vertices $v_{4}$, $v_{5}$ and $v_{6}$ are
	pairwise distinct and none of them can be adjacent to any of the
	two others. 
	Therefore, the  subgraph of $G$ induced by  the vertices $\{v_{i} | 1 
	\leqslant i
	\leqslant 6\}$ is a copy of  $S_{3}$ in $G$ and hence the vertices of $T$
	form the set of the central vertices of this copy of $S_3$.  
	So (ii) and (i)
	are in a one-to-one correspondence.
	
	To see the bijection between 
	(iii) and (i), recall that  any vertex of degree $3$ in $\widehat{G}$ 
	represents 
	a (triangular)
	inner face %$F$ 
	of $G$ such that each  edge on its boundary is shared  with 
	the boundary     of another triangular inner face. Thus each   vertex of 
	degree~$3$  
	in     $\widehat{G}$ corresponds to  
	 a copy of $S_3$ 
	in $G$,
	and clearly, also vice-versa.
\end{proof}

Objects corresponding to each other in terms of
the bijections given in the proof of 
\autoref{lem:centralVerticesS3_trianglesGTilde_degree3verticesGhat_bijection} 
are referred to as  \emph{corresponding objects}.

Now we are able to prove the following structural result for $B_{1}$-EPG 
representations of maximal outerplanar graphs.
\begin{corollary}
	\label{triangle_in_reduced_graph_is_claw_clique}
	Let $G$ be a maximal outerplanar graph and $\widetilde{G}$ its reduced 
	graph. 
	In every  $B_{1}$-EPG representation of $G$ (if there is any) 
	the  vertices of any triangle in $\widetilde{G}$
	correspond to a  claw clique. 
\end{corollary}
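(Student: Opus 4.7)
The plan is to reduce the statement directly to \autoref{s3_claw} via the bijection established in \autoref{lem:centralVerticesS3_trianglesGTilde_degree3verticesGhat_bijection}.

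Let $T$ be an arbitrary triangle in $\widetilde{G}$ with vertex set $\{v_1,v_2,v_3\}$. By \autoref{lem:centralVerticesS3_trianglesGTilde_degree3verticesGhat_bijection}, the set $\{v_1,v_2,v_3\}$ is exactly the set of central vertices of some copy $H$ of $S_3$ in $G$, so there exist six pairwise distinct vertices $v_1,\dots,v_6$ of $G$ inducing a subgraph isomorphic to $S_3$, with $v_1,v_2,v_3$ playing the role of the central vertices.

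Now suppose we are given an arbitrary $B_1$-EPG representation of $G$. Since $H$ is an induced subgraph of $G$, restricting this representation to the six paths $P_{v_1},\dots,P_{v_6}$ and keeping the same underlying grid yields a $B_1$-EPG representation of $H \cong S_3$: every restricted path still has at most one bend, and two of the restricted paths share a grid edge if and only if the corresponding two vertices are adjacent in $G$, which (because $H$ is induced) is if and only if they are adjacent in $H$.

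Applying \autoref{s3_claw} to this induced $B_1$-EPG representation of $S_3$, the three paths $P_{v_1},P_{v_2},P_{v_3}$ corresponding to the central vertices of $H$ form a claw clique in that representation, i.e.\ there is a grid point $p$ and three grid edges incident to $p$ such that each of $P_{v_1},P_{v_2},P_{v_3}$ uses exactly two of these three grid edges. The very same grid point $p$ and the same three grid edges witness that $\{v_1,v_2,v_3\}$ corresponds to a claw clique in the original $B_1$-EPG representation of $G$, since the paths $P_{v_1},P_{v_2},P_{v_3}$ are unchanged. No step is genuinely difficult here; the only thing to be a bit careful about is the routine observation that the restriction of a $B_1$-EPG representation to an induced subgraph is again a $B_1$-EPG representation, which is what legitimately allows us to invoke \autoref{s3_claw}.
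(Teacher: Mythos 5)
Your proposal is correct and follows essentially the same route as the paper: identify the triangle's vertices as the central vertices of a copy of $S_3$ via \autoref{lem:centralVerticesS3_trianglesGTilde_degree3verticesGhat_bijection}, then apply \autoref{s3_claw}. The only difference is that you spell out the (routine) intermediate step that restricting a $B_1$-EPG representation to an induced subgraph yields a $B_1$-EPG representation of that subgraph, which the paper leaves implicit.
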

\begin{proof}
	According to
	\autoref{lem:centralVerticesS3_trianglesGTilde_degree3verticesGhat_bijection}
	the
	vertices of every triangle in $\widetilde{G}$ are the  
	central vertices of a 
	copy
	of $S_{3}$ in $G$. \autoref{s3_claw} implies that this set of central 
	vertices  corresponds to  a claw clique.
\end{proof}

Next we consider some maximal outerplanar graphs which are not $B_1$-EPG.

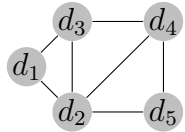
\begin{figure}[tb]
\begin{center}
\begin{tikzpicture}
  [scale=.6]
  
  \node[vertex] (d1) at (2,2) {$d_{1}$};
  \node[vertex] (d2) at (3,1) {$d_{2}$};
  \node[vertex] (d3) at (3,3) {$d_{3}$};
  \node[vertex] (d4) at (5,3) {$d_{4}$};
  \node[vertex] (d5) at (5,1) {$d_{5}$};

  \foreach \from/\to in {d1/d2,d1/d3,d2/d3,d3/d4,d4/d2,d4/d5,d5/d2}
  \draw (\from) -- (\to);
  
\end{tikzpicture}
\end{center}
\caption{The graph $M_{1}$.}
\label{fig:m1}
\end{figure}

%\vspace*{0.2cm}
\begin{figure}[tb]
\begin{center}
\begin{tikzpicture}
  [scale=1.15]
  
  \node[vertex] (a1) at (3,3) {$a_{1}$} ;
  \node[vertex] (a2) at (2,3) {$a_{2}$};
  \node[vertex] (a3) at (1.5,2) {$a_{3}$};
  \node[vertex] (a4) at (2.5,2) {$a_{4}$};
  
  \node[vertex] (b1) at (10,3) {$b_{1}$};
  \node[vertex] (b2) at (11,3) {$b_{2}$};
  \node[vertex] (b3) at (11.5,2) {$b_{3}$};
  \node[vertex] (b4) at (10.5,2) {$b_{4}$};

  \node[vertex] (c2) at (3.5,2) {$c_{1}$};  
  \node[vertex] (c3) at (4,3) {$c_{2}$};
  \node[vertex] (c4) at (4.5,2) {$c_{3}$};
  \node[vertex] (c5) at (5,3) {$c_{4}$};
  \node[vertex] (c6) at (5.5,2) {$c_{5}$};
  \node[vertex] (c7) at (6,3) {$c_{6}$};
    
  \node[vertex] (c8) at (7,3) {$\textit{ }$};
  \node[above] () at (7,3.1) {$c_{2\ell-6}$};
  
  \node[vertex] (c9) at (7.5,2) {$\textit{ }$};
  \node[below] () at (7.5,1.9) {$c_{2\ell-5}$};

  \node[vertex] (c10) at (8,3) {$\textit{ }$};
  \node[above] () at (8,3.1) {$c_{2\ell-4}$};
  
  \node[vertex] (c11) at (8.5,2) {$\textit{ }$};
  \node[below] () at (8.5,1.9) {$c_{2\ell-3}$};
  
  \node[vertex] (c12) at (9,3) {$\textit{ }$};
  \node[above] () at (9,3.1) {$c_{2\ell-2}$};
  
  \node[vertex] (c13) at (9.5,2) {$\textit{ }$};
  \node[below] () at (9.5,1.9) {$c_{2\ell-1}$};

  \foreach \from/\to in {a1/a2,a2/a3,a3/a4,a4/a1,b1/b2,b2/b3,b3/b4,b4/b1,a1/c2,a1/c3,c2/c3,c3/c4,c3/c5,c4/c5,c10/c11,c10/c12,c11/c12,c12/c13,c12/b1,c13/b1}
  \draw (\from) -- (\to);
  
    \foreach \from/\to in {c5/c6,c5/c7,c6/c7,c7/c8,c8/c9,c8/c10,c9/c10}
  \draw[dotted,thick] (\from) -- (\to);
  
      \foreach \from/\to in {a1/a3,a2/a4,b1/b3,b2/b4}
  \draw[dashed,thick] (\from) -- (\to);

\draw [decorate,decoration={brace,amplitude=10pt},xshift=0pt,yshift=0pt]
(10,1.5) -- (3,1.5) node [black,midway,yshift=-0.6cm] 
{$\ell$ triangles};

\end{tikzpicture}
\end{center}
\caption{The graph $M_{1}^{\ell}$ with $\ell \geqslant 0$ consecutive triangles
  between the vertices $a_{1}$ and $b_{1}$. For $\ell=0$ there are no vertices
  $c$ and the vertices $a_{1}$ and $b_{1}$ coincide. For all values of $\ell$ either  $\{a_{1},a_{3}\}$ or
  $\{a_{2},a_{4}\}$   and  either  $\{b_{1},b_{3}\}$ or  $\{b_{2},b_{4}\}$ is an edge in $M_{1}^{\ell}$.}
\label{fig:m1_n}
\end{figure}
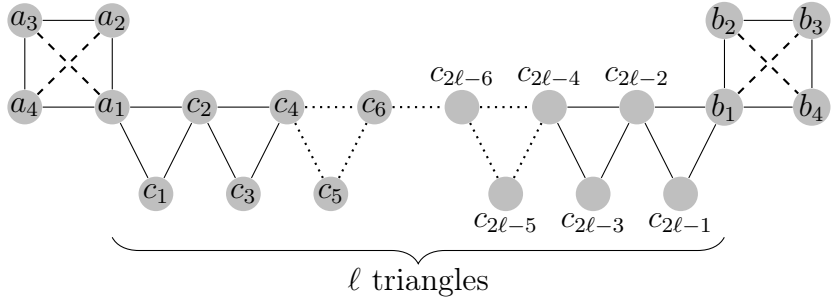

\begin{lemmaMy}
\label{m1_n_non_in_b1}
Let $G$ be a maximal outerplanar graph and $\widetilde{G}$ its reduced graph. 
If the graph $M_{1}$ depicted in \autoref{fig:m1} or the graph $M_{1}^{\ell}$
depicted in \autoref{fig:m1_n} 
is an induced subgraph of $\widetilde{G}$ for some $\ell \geqslant 0$, then $G$ is not in $B_{1}$.
\end{lemmaMy}
\begin{proof}%[\bf Proof of \autoref{m1_n_non_in_b1}]
	Assume by contradiction that  the maximal outerplanar graph $G$ is not 
	$M$-free
	while being  $B_1$-EPG and consider a $B_1$-EPG representation of $G$.
	
	Consider  first the case where $\widetilde{G}$ contains a copy of 
	$M_{1}^{\ell}$ for
	some $\ell \geqslant 0$ (see \autoref{fig:m1_n}). %We will derive a 
	%contradiction. 
	Since exactly one  of $\{a_{1},a_{3}\}$ and $\{a_{2},a_{4}\}$ is an  edge 
	in the
	copy of $M_{1}^{\ell}$ in $\widetilde{G}$  the vertices
	$\{a_{1},a_{2},a_{3},a_{4}\}$ form two triangles in  
	$\widetilde{G}$. 
	According  to \autoref{triangle_in_reduced_graph_is_claw_clique} the 
	vertices of any of these two triangles corresponds to  a claw clique and 
	due to 
	\autoref{s3_claw} in
	each claw clique two of the  three  paths are bent.  
	It is easily observed  that these  two claw cliques must have different 
	central 
	grid points and
	there cannot be a path
	which is bent in both claw cliques. 
	Thus  any path corresponding to some vertex in  
	$\{a_{1},a_{2},a_{3},a_{4}\}$ is
	bent in exactly one of the these two claw cliques. 
	Analogously, any   path corresponding to a vertex in  
	$\{b_{1},b_{2},b_{3},b_{4}\}$ is
	bent in exactly one of the two claw cliques corresponding  to 
	the two triangles 
	formed by $\{b_{1},b_{2},b_{3},b_{4}\}$. 
	
	If  $\ell = 0$, then $a_{1} = b_{1}$ and  
	the
	vertices of  two triangles in $\widetilde{G}$  which share only one  vertex
	correspond to claw cliques with different central grid points.   Thus,  the 
	vertex $a_{1} = b_{1}$ has to be bent in two 
	claw cliques with different central grid points, a contradiction to
	$G$ being $B_1$-EPG.
	Assume now $\ell \geqslant 1$. We set $c_0:=a_1$ and  
	$c_{2\ell}:=b_{1}$
	for notational consistency.  
	Due to  \autoref{triangle_in_reduced_graph_is_claw_clique} the vertices
	$\{c_{0},c_{1},c_{2}\}$ of the first triangle  correspond to   a claw
	clique
	$K_{1}$.  It is easily observed that   the  central grid point of $K_{1}$ 
	is 
	different from the central grid
	points of the claw cliques corresponding  to the two triangles formed by
	$\{a_1,a_2,a_3,a_4\}$. Then, 
	the path corresponding to $a_{1}=c_0$ is bent
	in one of the two claw cliques corresponding to the two triangles formed by
	$\{a_1,a_2,a_3,a_4\}$, so it cannot be bent in $K_1$.  
	Hence, the paths corresponding to $c_{1}$ and $c_{2}$ are bent
	in the claw clique $K_{1}$. Let $K_{i}$ be the claw clique corresponding to
	the vertices  $\{c_{2i-2},c_{2i-1},c_{2i}\}$ of   the $i$-th triangle
	for $1\leqslant i\leqslant \ell$. 
	By induction  we get that the paths corresponding to the vertices 
	$c_{2i-1}$ 
	and $c_{2i}$ are bent in $K_{i}$
	for $1 \leqslant i \leqslant \ell$. 
	Thus the path corresponding to  $c_{2\ell}=b_{1}$ has to be bent 
	in 
	the claw
	clique $K_{\ell}$ as well as in one of
	the claw cliques corresponding to the two triangles formed by
	$\{b_{1},b_{2},b_{3},b_{4}\}$, and this is a contradiction.

	Consider now the case where   $M_{1}$ is an induced subgraph of 
	$\widetilde{G}$
	(see \autoref{fig:m1}). 
	%We complete the proof by deriving a contradiction also in this case.  
	By analogous arguments as  for   $\{a_{1},a_{2},a_{3},a_{4}\}$ the
	path corresponding to  any vertex in
	$\{d_{1},d_{2},d_{3},d_{4}\}$ is  bent in exactly one of the  two 
	claw 
	cliques corresponding to the 
	two triangles formed by $\{d_{1},d_{2},d_{3},d_{4}\}$. But then two  of the 
	paths
	corresponding to $\{d_{2},d_{4},d_{5}\}$ have to be bent in  the  
	claw clique
	corresponding to $\{d_{2},d_{4},d_{5}\}$   while $d_2$ and $d_4$ have to be 
	bent
	also in another claw clique with a different central grid point. 
	This contradicts the definition of   a $B_1$-EPG
	representation. 
\end{proof}

\autoref{m1_n_non_in_b1} describes a class of  maximal outerplanar graphs which are not   $B_{1}$-EPG.
In fact this class contains all  maximal outerplanar graphs which are not
$B_{1}$-EPG as stated in in \autoref{out_tri_in_b1},  the main result in this
section. 
The following definition allows us to simplify notation.

\begin{definition}\label{M-free-ouerplanar}
    A maximal outerplanar graph $G$ is called \emph{$M$-free} if its reduced 
    graph
    $\widetilde{G}$ contains neither  $M_1$ nor $M_1^{\ell}$, for any $\ell \geqslant 0$, as
    an induced subgraph. 
\end{definition}

Before we can consider the construction of  a
 $B_{1}$-EPG representation of a $B_1$-EPG  maximal outerplanar graph $G$,  
we first need the following definitions.

\begin{definition} \label{def: touching neighbored}
Let $G$ be a  maximal outerplanar graph with almost-dual graph  $\widehat{G}$ and
 reduced graph $\widetilde{G}$.
  We denote by \emph{$\widehat{V}_3$} the set of vertices of
 degree $3$ in the almost-dual graph   $\widehat{G}$, so $\widehat{V}_3=\{\hat{v}\in V(\widehat{G})\colon
 \mbox{$\hat{v}$ has degree $3$ in $\widehat{G}$}\}$.

Two distinct triangles in $\widetilde{G}$ which  share an edge are called
\emph{neighbored (to each other)}. Two distinct triangles in $\widetilde{G}$ which share
a vertex, but not an edge, are called \emph{touching (each other)}. 
A \emph{sequence of touching triangles} (\emph{STT}) $T_{1}$, \dots, $T_{k}$ for some $k \in
\mathbb{N}$  is a sequence of triangles $T_{i}$ in $\widetilde{G}$ such that
the triangles $T_{i}$ and $T_{i+1}$ are touching for each $1 \leqslant i
\leqslant k - 1$. 
The \emph{surrounding of a pair of  triangles} $T$ and $T'$  \emph{neighbored
  to each other} in
$\widetilde{G}$ is the set of all triangles $T^{\ast}$ in $\widetilde{G}$, such
that $T$ or  $T'$ can be reached from $T^{\ast}$ over an STT,
  i.e.\ there is an STT $T_{1}$, \dots,
$T_{k}$ with  $T_{1}=T^{\ast}$ and $T_{k} \in \{T, T'\}$.
    
Furthermore we translate the definitions related to triangles in
$\widetilde{G}$ also to the corresponding vertices of degree $3$ in   $\widehat{G}$.  
More precisely  two vertices $\hat{v}$, $\hat{v}'$  of degree $3$ in
$\widehat{G}$ that correspond to two neighbored (touching) triangles in
$\widetilde{G}$
with respect to the bijection of \autoref{lem:centralVerticesS3_trianglesGTilde_degree3verticesGhat_bijection}  are called \emph{neighbored} (\emph{touching}). 
A vertex $\hat{v}^{\ast}$ of degree $3$ in $\widehat{G}$ is said to be
\emph{in the surrounding of two neighbored vertices} $\hat{v}$,  $\hat{v}'$ of degree $3$ in $\widehat{G}$ 
if the corresponding triangle $T_{\hat{v}^{\ast}}$ is in the surrounding of the
neighbored triangles $T_{\hat{v}}$,  $T_{\hat{v}'}$.
        
Finally a \emph{cycle of touching or neighbored triangles} (\emph{CTNT}) is defined as a
sequence of triangles $T_{1}$, \dots, $T_{k}$ in $\widetilde{G}$,  $k \in \mathbb{N}$,   such that for each $1 \leqslant i
\leqslant k-1$, $T_{i}$ and $T_{i+1}$ are either neighbored or touching
triangles,   and also $T_{1}$ and $T_{k}$ are either neighbored or touching
triangles. 
A CTNT $T_{1}$, \dots, $T_{k}$ is called
\emph{reduced} 
if for all $1 \leqslant i < j \leqslant k$ the triangles $T_{i}$ and $T_{j}$
are only neighbored or touching if either $j = i+1$, or $i=1$ and $j=k$. 
In other words a CTNT is reduced iff
triangles of that cycle which are non-consecutive are neither touching each other nor 
neighbored to each other.    
\end{definition}

Next we investigate the structure of $M$-free maximal
outerplanar graphs.
\begin{lemmaMy} \label{lem: b1 max out plan partition vertices deg 3}
Let $G$ be an $M$-free  maximal outerplanar graph 
 with  almost-dual graph $\widehat{G}$  and  reduced graph $\widetilde{G}$. 
Then the vertices in $\widehat{V}_3$ can be partitioned such that
each vertex is either $(A)$ neighbored to exactly one other vertex of degree
$3$ in $\widehat{G}$, or $(B)$ not neighbored, but in the surrounding of
exactly one pair of  neighbored vertices in $\widehat{G}$, 
or $(C)$ not in the surrounding of any pair of neighbored vertices in $\widehat{G}$.
\end{lemmaMy}
\begin{proof}%[\bf Proof of \autoref{lem: b1 max out plan partition vertices 
%deg 3}]
	Due to 
	\autoref{lem:centralVerticesS3_trianglesGTilde_degree3verticesGhat_bijection}
	we can consider triangles in $\widetilde{G}$ instead of considering 
	vertices of degree $3$ in $\widehat{G}$.
	
	Since  $M_{1}$ is not an induced subgraph of $\widetilde{G}$, every triangle
	in $\widetilde{G}$ is neighbored to at most one other triangle.
	Moreover if $T_1$ and $T_2$ are neighbored triangles in $\widetilde{G}$ 
	with vertex sets $V(T_1)$ and $V(T_2)$,
	respectively, then none of  the vertices in  $V(T_1)\cup V(T_2)$ can  be a
	vertex of some other pair of  neighbored  triangles,  because otherwise
	$M_{1}^{0}$  would be an     induced subgraph  of $\widetilde{G}$.
	To summarize, neighbored triangles in $\widetilde{G}$ appear in pairs of two
	and two different pairs of neighbored triangles  share neither edges nor 
	vertices.
	
	Further, it is easy to see that each triangle of $\widetilde{G}$ is in
	the surrounding of at most one pair of neighbored triangles. Indeed, if 
	a
	triangle $T$ of $\widetilde{G}$ was in the surrounding of two different
	pairs of  neighbored
	triangles $(T_{1},  T_{2})$ and   $(T_{3},T_{4})$, then 
	the graph induced by $T$, $T_{1}$, $T_{2}$, $T_{3}$, $T_{4}$ and the 
	two
	STTs from $T$ to the two pairs  $(T_1,T_2)$,  $(T_3,T_4)$  would contain
	a copy of $M_{1}^{\ell}$  for some $\ell\geqslant 0$, a contradiction.
	Hence, the surroundings of different pairs of neighbored triangles are
	disjoint and the result follows.
\end{proof}
For an $M$-free graph $G$  we denote the sets of vertices of degree $3$ in $\widehat{G}$ 
in the parts $(A)$, $(B)$ and $(C)$
 of the partition of \autoref{lem: b1 max out plan partition vertices deg 3} by $A$, $B$ and $C$, respectively.

Finally we are able to present \autoref{alg: b1 con out assignment} which 
recognizes whether an input maximal outerplanar graph $G$ is $M$-free and
computes a particular  assignment of pairs of  vertices of
$G$ to vertices in 
$\widehat{V}_3$ in the positive case. The particular assignment is
  called $\textAlgo{assigned}$.  It  maps 
$\widehat{V}_3$
to $V(G)\times V(G)$  such that both vertices of the pair
$\textAlgo{assigned}(\hat{v})$  are  central
  vertices of the copy of $S_3$  in $G$ corresponding to $\hat{v}$ in the sense
  of
  \autoref{lem:centralVerticesS3_trianglesGTilde_degree3verticesGhat_bijection}.
  Moreover, these  two vertices are  selected carefully, such that 
  no vertex of $V$ is
  assigned to more than one $\hat{v}\in \widehat{V}_3$.
This  assignment  will be used  to  
construct a  $B_{1}$-EPG
representations of  a $B_1$-EPG maximal outerplanar graph in
\autoref{alg: b1 con out}.
In particular, the path corresponding to a vertex $a \in 
	V(G)$ will bend for representing $\hat{v}$ iff $a \in V(T_{\hat{v}})$ and 
	$a$ is assigned to $\hat{v}$.

\begin{algorithm}[tbp]
	\footnotesize
	\caption{Construct an assignment $\textAlgo{assigned} \colon \widehat{V}_3 
	\to
		V(G)\times V(G)$ 
		%of pairs  of  vertices of  $G$ to      vertices  of degree $3$ in 
		%$\widehat{G}$
	}
	\label{alg: b1 con out assignment}
	\Input{A maximal  outerplanar  graph  $G$, its reduced graph 
	$\widetilde{G}$ and its almost-dual $\widehat{G}$}
	\Output{$\textAlgo{assigned} \colon \widehat{V}_3 \to
		V(G)\times V(G)$ or ``$G$ is not $M$-free'' 
		% of a pair of vertices of $G$ to each vertex of degree $3$ in 
		%$\widehat{G}$
	}
	\begin{algorithmic}[1]
		\Procedure{Assignment\_Max\_Outerplanar}{$G$, $\widehat{G}$, 
		$\widetilde{G}$}
		%\Procedure{Assignment}{$G$}
		%\Procedure{Explore}{$v_i$}
		%\Procedure{a}{b}
		%\State Set $\widehat{V}_3 := \{\hat{v} \in V(\widehat{G}): \hat{v} 
		%\text{ has degree }3\text{ in }\widehat{G}\}$
		%\State Let $T_{\hat{v}}$ be the triangle that corresponds to $\hat{v}$ 
		%and $V(T_{\hat{v}})$ its $3$ vertices of $V(G)$
		\State Set $\textAlgo{assigned}(\hat{v}) := \emptyset$ and 
		$\textAlgo{label}(\hat{v}) :=
		\textAlgo{unserved}$ for each $\hat{v} \in \widehat{V}_3$ \label{alg: 
		b1 con out
			assignment init label}
		\State Set $\Delta(a):=NULL$ for all $a\in V(G)$
		\State Set $\textAlgo{level}(\hat{v}) := \infty$ for each $\hat{v} \in
		\widehat{V}_3$ and $\textAlgo{level}(NULL):=\infty$
		
		\For{ $\hat{v} \in \widehat{V}_3$} \label{alg: b1 con out assignment 
		step 1 start}
		\If{$\textAlgo{label}(\hat{v}) =  \textAlgo{unserved}$ and $\exists 
		\hat{v}' \in
			\widehat{V}_3$: $\hat{v}$ is neighbored to 
			$\hat{v}'$}\label{if_unserved:1}
		\label{alg: b1 con out assignemnt def neigh tria}
		%\If{$\textAlgo{label}(\hat{v}')=\textAlgo{served}$}\label{if_prior_to_not_M-free:1}
		%\State Output  ``$G$ is not $M$-free''  and STOP \label{not_M-free:1}
		%\EndIf
		\State Set $\{a,b\} := V(T_{\hat{v}}) \cap V(T_{\hat{v}'})$ \label{alg: 
		b1 con out assignment def ab}
		\State Set $\{c\} :=   V(T_{\hat{v}})\setminus\{a,b\}$ and $\{d\}
		:=   V(T_{\hat{v}'})\setminus\{a,b\}$ \label{alg: b1 con out
			assignment def cd}
		\If{$\exists x\in  V(T_{\hat{v}})\cup V(T_{\hat{v}'})$ with 
		$\Delta(x)\neq NULL$}\label{if_prior_to_not_M-free:1}
		\State Output  ``$G$ is not $M$-free''  and STOP \label{not_M-free:1}
		\EndIf
		\State Set $\textAlgo{assigned}(\hat{v}) := \{a, c\}$ and  
		$\textAlgo{label}(\hat{v}) :=
		\textAlgo{served}$ \label{serve_neighbor_1}
		\State Set $\textAlgo{assigned}(\hat{v}') := \{b,d\}$ and 
		$\textAlgo{label}(\hat{v}') :=
		\textAlgo{served}$\label{serve_neighbor_2}
		\State Set $\textAlgo{level}(\hat{v}) := 0$ and 
		$\textAlgo{level}(\hat{v}') := 0$
		\State Set  $\Delta(a):=\hat{v}$, $\Delta(c):=\hat{v}$,
		$\Delta(b):=\hat{v}'$, $\Delta(d):=\hat{v}'$
		\label{alg: b1 con out assignment set delta}
		\EndIf
		\EndFor \label{alg: b1 con out assignment step 1 stop}
		
		\While{ $\exists \hat{v} \in \widehat{V}_3$: $\textAlgo{label}(\hat{v}) 
		=  
		\textAlgo{unserved}$ } \label{alg: b1 con out assignment step 3 start}
		\longWhile{ $\exists \hat{v} \in \widehat{V}_3$: 
		$\textAlgo{label}(\hat{v}) =
			\textAlgo{unserved}$ and \\
			$\min \{ \textAlgo{level}(\Delta(x))\colon x\in 
			V(T_{\hat{v}})\}\neq \infty 
			$}\label{alg: b1 con out assignment step 2 start}
		%                $\exists \hat{v}' \in \widehat{V}_3$: $\hat{v}$ and 
		%$\hat{v}'$ are touching each other and $\textAlgo{label}(\hat{v}') = 
		%\textAlgo{served}$}\label{alg: b1 con out assignment step 2 start}
		\State Set $a:=\argmin \{ \textAlgo{level}(\Delta(x)) \colon x\in
		V(T_{\hat{v}})\}$ and  $\hat{v}'' := \Delta(a)$\label{alg: b1 con out 
		assignment choice min level}
		\If{$\exists x\in V(T_{\hat{v}})\setminus \{a\}$ with $\Delta(x)\neq 
		NULL$}\label{precede_not_M-free:2}
		\State Output ``$G$ is not $M$-free'' and STOP  \label{not_M-free:2}
		\EndIf
		\State Set $\textAlgo{assigned}(\hat{v}) := 
		V(T_{\hat{v}})\setminus\{a\}$
		and $\textAlgo{label}(\hat{v}) := \textAlgo{served}$ 
		\label{a_not_assigned}
		\State Set $\textAlgo{level}(\hat{v}) := \textAlgo{level}(\hat{v}'') + 
		1$ \label{levels}
		\ForInline{$x\in V(T_{\hat{v}})\setminus \{a\}$}{ $\Delta(x):=\hat{v}$}
		\EndlongWhile \label{alg: b1 con out assignment step 2 stop}
		\If{$\exists \hat{v} \in \widehat{V}_3$: $\textAlgo{label}(\hat{v}) =  
		\textAlgo{unserved}$} 
		\label{alg: b1 con out assignment choice random vertex}
		\State Let $a \in V(T_{\hat{v}})$ be a random vertex of $V(T_{\hat{v}})$
		\State Set $\textAlgo{assigned}(\hat{v}) := 
		V(T_{\hat{v}})\setminus\{a\}$
		and  $\textAlgo{label}(\hat{v}) := 
		\textAlgo{served}$\label{serve_vertex_in_partition_c}
		\State Set $\textAlgo{level}(\hat{v}) := 0$
		\ForInline{$x\in V(T_{\hat{v}})\setminus\{a\}$}
		{$\Delta(x):=\hat{v}$}
		\EndIf\label{alg: b1 con out assignment choice random vertex end}
		\EndWhile \label{alg: b1 con out assignment step 3 stop}     
		
		\State \Return $\textAlgo{assigned}$
		\EndProcedure
	\end{algorithmic}
\end{algorithm}

%We prove these statements  in \autoref{lem: assignemnt well definied and 
%properties} and \autoref{cor:Recognition max out plan B1 quadratic}.  

 Next we address two  important
arrays used by the   algorithm: $\textAlgo{level}(\hat{v})$, for 
$\hat{v}\in
\widehat{V}_3$, and $\Delta(a)$, for $a\in V(G)$. 
Their meaning is as follows. 
$\Delta(a)=\hat{v}\in \widehat{V}_3$ iff $a$ belongs to  
$\textAlgo{assigned}(\hat{v})$.
The meaning of $\textAlgo{level}(\hat{v})$ is a bit
more complicated: 
in the case that the
input graph $G$ is $M$-free 
for every pair  $(\hat{v},\hat{v}')$ of neighbored vertices
in $\widehat{V}_3$ 
we have $\textAlgo{level}(\hat{v})=\textAlgo{level}(\hat{v}')=0$. 
For a vertex $\hat{v}$  in the
surrounding of some pair of neighbored vertices  $(\hat{w},\hat{w}')$ (i.e.\
$\hat{v} \in B$ according to
\autoref{lem: b1 max out plan partition vertices deg 3}) the
quantity
$\textAlgo{level}(\hat{v})=:k-1$ determines  the length
of a  STT $\hat{v}_1, \ldots, \hat{v}_k$ in $\widehat{V}_3$   starting at    $\hat{w}$ or $\hat{w}'$ and
ending at $\hat{v}=\hat{v}_k$.  Thus  in the execution of line~\ref{levels} of \autoref{alg: b1 con out assignment}  the
vertex $\hat{v}''$ coincides with $\hat{v}_{i-1}$ when $\hat{v}$ coincides with
$\hat{v}_i$ for every $i\in \{2,\ldots,k\}$.  

For each vertex $\hat{v}$ which does not belong to
the surrounding of some pair of neighbored vertices in $\widehat{V}_3$ 
(i.e.\  $\hat{v} \in C$ according to \autoref{lem: b1 max out plan partition vertices deg 3}) 
the following holds. 
There is a  STT $\hat{v}_1, \ldots, \hat{v}_\ell$ in $\widehat{V}_3$ such that 
$\textAlgo{level}(\hat{v}_1)=0$ holds, $\hat{v} =  \hat{v}_\ell$ and in the 
execution of line~\ref{levels} of the algorithm  the
vertex $\hat{v}''$ coincides with $\hat{v}_{j-1}$ when $\hat{v}$ coincides with
$\hat{v}_j$ for every $j\in \{2,\ldots,\ell\}$.  

 The following  
\autoref{lem: assignemnt well
  definied and properties} states some  properties of the assignment
constructed in \autoref{alg: b1 con out assignment}.

\begin{lemmaMy}
    \label{lem: assignemnt well definied and properties}
    Let $G$ be a maximal outerplanar graph with  almost-dual
    $\widehat{G}$ and  reduced graph $\widetilde{G}$.
    Then \autoref{alg: b1 con out assignment} is well-defined. %, 
%    i.e.\ all lines can be executed as described and 
    %the algorithm 
%    it terminates.
    At termination 
    %\autoref{alg: b1 con out assignment} 
    it either outputs ``$G$ is not $M$-free''
    or it returns an assignment that assigns two vertices $x_{\hat{v}}$, $y_{\hat{v}}$  of $G$ to every vertex $\hat{v}$  in $\widehat{V}_3$ such that
    %\begin{itemize}
        %\setlength{\itemsep}{0pt}
        %\item[(1)] 
        (1)
        $x_{\hat{v}}$, $y_{\hat{v}}$ are central vertices of the copy of 
        $S_{3}$ in $G$
        corresponding to $\hat{v}$, and 
        %\item[(2)]
        (2)
        no vertex of $G$ is assigned to more than one vertex $\hat{v}$ of $\widehat{G}$.
    %\end{itemize}
\end{lemmaMy}
\begin{proof}%[\bf Proof of \autoref{lem: assignemnt well definied and 
	%	properties}]
	We first  show that  \autoref{alg: b1 con out assignment} is
	well-defined, i.e.\  that it  can be executed as described 
	and that
	%the algorithm 
	it terminates.
	%For most of the lines it is easily seen that they  can be executed 
	%as described in \autoref{alg: b1 con out assignment}. Only some particular 
	%lines
	%need a closer consideration as follows. 
	Observe that by the definition of neighbored vertices $|V(T_{\hat{v}}) \cap
	V(T_{\hat{v}'})| = 2$ holds, so $a$, $b$, $c$ and $d$ can be defined as 
	described 
	in line~\ref{alg: b1 con out assignment def ab} and~\ref{alg: b1 con out
		assignment def cd}.
	Furthermore, $level(\Delta(x)) < \infty$ if and only if $\Delta(x) \neq 
	NULL$, 
	and $\Delta(x) \in \widehat{V}_3$ holds for all $x \in V(G)$.
	So $\hat{v}'' \in \widehat{V}_3$ holds in 
	line~\ref{alg: b1 con out assignment choice min level} and the sum in 
	line~\ref{levels}
	is well-defined.
	
	Next we show that the algorithm always terminates correctly. 
	This clearly holds    if \autoref{alg: b1 con out assignment} outputs ``$G$ 
	is not $M$-free''.  
	%Moreover this termination happens at 
	%line~\ref{not_M-free:1}
	%or at line~\ref{not_M-free:2} and in both cases there are still unserved 
	%vertices in $\widehat{V}_3$ (see line~\ref{if_unserved:1}
	%and line~\ref{alg: b1 con out assignment step 2 start}, respectively), 
	%thus 
	%the algorithm does not assign two vertices $x_{\hat{v}}$,
	%$y_{\hat{v}}$ to every vertex $\hat{v}\in \widehat{V}_3$ in this case. 
	Assume now that the algorithm does not output ``$G$ is not $M$-free''.
	The algorithm starts with all  vertices of 
	$\widehat{V}_3$ being labeled \emph{unserved} and having  no assigned
	vertices  (line~\ref{alg: b1 con out assignment init label}). Then the
	algorithm  iteratively {\sl serves the vertices}  $\hat{v}$ of
	$\widehat{V}_3$,
	i.e.\ it  labels them  \emph{served} and assigns them  the vertices 
	$x_{\hat{v}}$, $y_{\hat{v}}$  from
	$G$.     Since the while loop in lines~\ref{alg: b1 con out assignment step 
	3
		start} to~\ref{alg: b1 con out assignment step 3 stop} is executed as 
	long
	as there are unserved vertices  in $\widehat{V}_3$, the algorithm 
	terminates after a finite number of
	steps. Moreover,  it has  assigned a
	pair of vertices $x_{\hat{v}}$, $y_{\hat{v}}$ from $G$ to every vertex  
	$\hat{v}$ in $\widehat{V}_3$ at termination.
	
	\begin{comment}
	We show that in this case the algorithm terminates 
	and assigns two vertices of $G$ to every vertex in $\widehat{V}_3$ with the 
	given properties.
	
	Towards that end observe that at the  beginning 
	of \autoref{alg: b1 con out assignment}   each vertex of
	$\widehat{V}_3$ is labeled unserved and has no assigned
	vertices  (line~\ref{alg: b1 con out assignment init label}). Then  
	throughout the
	algorithm a vertex gets the label \emph{served} at the same time as it gets
	assigned vertices;  we use the term ``a vertex is served'' to describe this 
	action.  
	Observe that only  unserved vertices get some vertices assigned. Thus  
	after having  served a  vertex $\hat{v}$ of $\widehat{G}$ the algorithm 
	does not change the pair of vertices of $G$
	assigned to $\hat{v}$ anymore.
	Moreover the while loop in lines~\ref{alg: b1 con out assignment step 3
	start} to~\ref{alg: b1 con out assignment step 3 stop} is executed as 
	long
	as there are unserved vertices  in $\widehat{V}_3$ and    it serves one
	vertex in every run.
	Thus the algorithm terminates after a finite number of steps
	and at termination it has assigned a
	pair of vertices $x_{\hat{v}}$, $y_{\hat{v}}$ from $G$ to every vertex  
	$\hat{v}$ in $\widehat{V}_3$.
	\end{comment}
	
	It is easy to see that the vertices $x_{\hat{v}}$, $y_{\hat{v}}$ assigned
	to $\hat{v} \in \widehat{V}_3$    fulfill property~(1) by construction.
	In order to see that also (2) is fulfilled,  observe that $\Delta(x) = 
	\hat{v}$ iff 
	$x \in \textAlgo{assigned}(\hat{v})$ for all $x$ and all $\hat{v}$. 
	Moreover, both the
	equality and the inclusion hold      immediately after the vertex $\hat{v}$ 
	is served.
	Finally, $\Delta(x) = NULL$ holds whenever a vertex $x$ of $G$ 
	is assigned to a vertex $\hat{v}$ in $\widehat{V}_3$
	in lines~\ref{serve_neighbor_1},~\ref{serve_neighbor_2}
	or~\ref{a_not_assigned}. Otherwise  the condition of the if statement in 
	lines~\ref{if_prior_to_not_M-free:1} or~\ref{precede_not_M-free:2} 
	would have been fulfilled and the algorithm would have already terminated 
	with
	``$G$ is not $M$-free''.
	Hence, a vertex $x$ of $G$ can only be assigned to 
	at most one vertex $\hat{v}$ of $\widehat{V}_3$.
\end{proof}

In order to examine \autoref{alg: b1 con out assignment} in more detail, we 
refer to the execution of lines~\ref{alg: b1 con out assignment step 1
	start} to~\ref{alg: b1 con out assignment step 1 stop} as step one, 
the first execution of lines~\ref{alg: b1 con out assignment step 2 start} to 
\ref{alg: b1 con out assignment step 2 stop} as step two and to the remaining
executions of lines~\ref{alg: b1 con out assignment step 3 start} 
to~\ref{alg: b1 con out assignment step 3 stop} as step three.
We start our investigation by considering step one in the next two lemmata.

\begin{lemmaMy}
	\label{lem: assignemnt one_outputOrAssignment}
	Let $G$ be a maximal outerplanar graph with  almost-dual
	$\widehat{G}$ and  reduced graph $\widetilde{G}$.
	Then in step one \autoref{alg: b1 con out assignment} either 
	outputs ``$G$ is not $M$-free''
	or it serves exactly the vertices in $\widehat{V}_3$ which are neighbored.
\end{lemmaMy}
\begin{proof}
	If \autoref{alg: b1 con out assignment}  
	does not output ``$G$ is not $M$-free'' in step one, 
	then it loops through all vertices of
	$\widehat{V}_3$ and serves all vertices which are unserved and
	neighbored to another vertex,
	so clearly   
	exactly neighbored vertices are served in step one. Moreover, if the 
	algorithm
	outputs  ``$G$ is not $M$-free'' in step one, it does so in
	line~\ref{not_M-free:1} and at this point there are still unserved vertices
	which belong to a pair of neighbored vertices (see
	line~\ref{if_unserved:1}). Thus in this case the algorithm does not serve 
	all
	neighbored vertices in~$\widehat{V}_3$. 
\end{proof}

\begin{lemmaMy}
	\label{lem: assignemnt one_outputCorrect}
	Let $G$ be a maximal outerplanar graph with  almost-dual
	$\widehat{G}$ and  reduced graph $\widetilde{G}$.
	Then \autoref{alg: b1 con out assignment} 
	outputs ``$G$ is not $M$-free''
	in line~\ref{not_M-free:1} iff $\widetilde{G}$ contains a copy of $M_1$ or 
	$M_1^0$.
\end{lemmaMy}
\begin{proof}
	Consider first  the case that   $\widetilde{G}$ contains a copy of $M_1$ 
	(see \autoref{fig:m1}).   Let $\hat{v}_1$, $\hat{v}_2$ and $\hat{v}_3$ be 
	the vertices in 
	$\widehat{V}_3$ that induce an $M_1$ in $\widetilde{G}$. We  show that  
	\autoref{alg: b1 con out assignment}     outputs ``$G$ is not $M$-free''
	in line~\ref{not_M-free:1} in this case.
	%If the algorithm terminates with ``$G$ is not $M$-free'' during the
	%execution of   
	%line~\ref{not_M-free:1} there is nothing to show.  
	%So we
	%To do so, we assume that this is not the case and derive a contradiction.
	Assume by contradiction that this is not the case. Thus,    the algorithm 
	completes  the for loop which
	starts at line~\ref{alg: b1 con out assignment step 1 start}. 
	Assume that 
	$\hat{v}_1$ is served by the algorithm before $\hat{v}_2$ and  $\hat{v}_3$.
	%we need the cases as \hat{v}_1, .._2, .._3 have special positions in M_1 
	The other cases ($\hat{v}_2$ or $\hat{v}_3$ are served first) 
	can be handled analogously. 
	Consider the
	execution of line~\ref{if_unserved:1} with $\hat{v}=\hat{v}_1$ or 
	$\hat{v}'=\hat{v}_1$ prior
	to serving $\hat{v}_1$. There
	are two cases: either $\{\hat{v}, \hat{v}'\} \neq \{\hat{v}_1,\hat{v}_2\}$ 
	or $\{\hat{v}, \hat{v}'\} = \{\hat{v}_1,\hat{v}_2\}$. 
	In the first case the condition in line~\ref{if_prior_to_not_M-free:1}
	is fulfilled at the latest by  the execution of this line with
	$\hat{v}=\hat{v}_2$ or $\hat{v}'=\hat{v}_2$ and the algorithm would stop 
	with   ``$G$ is not $M$-free'' in line~\ref{not_M-free:1}, a contradiction. 
	In the latter case the condition in line~\ref{if_prior_to_not_M-free:1}
	is fulfilled at the latest by  the execution of this line with
	$\hat{v}=\hat{v}_3$  or $\hat{v}'=\hat{v}_3$  and the algorithm would stop 
	with   ``$G$ is not $M$-free'' in line~\ref{not_M-free:1}, again a 
	contradiction.

	Consider now the case   that  $\widetilde{G}$ contains a copy of $M_1^0$ 
	but no
	copy of $M_1$.  Let $\hat{v}_1$, $\hat{v}_2$, $\hat{v}_3$ and $\hat{v}_4$ 
	be vertices of $\widehat{V}_3$ that induce an $M_1^0$ 
	such that $\hat{v}_1$ and $\hat{v}_2$ are neighbored to each other 
	and  $\hat{v}_3$ and $\hat{v}_4$ are neighbored to each other.
	We  show by contradiction  that 
	\autoref{alg: b1 con out assignment} 
	outputs ``$G$ is not $M$-free'' in line~\ref{not_M-free:1}. 
	%If \autoref{alg: b1 con out assignment} outputs 
	%``$G$ is not $M$-free'' we have completed the proof, 
	%Assume it does not output ``$G$ is not 
	%$M$-free''.
	
	Notice that since $\widetilde{G}$ does not contain a copy of $M_1$, 
	each neighbored triangle is neighbored to exactly one other triangle.
	Assume  w.l.o.g.\  that during the algorithm
	$\hat{v}_1$ and $\hat{v}_2$ coincide with $\hat{v}$ and 
	$\hat{v}'$ 
	in line~\ref{alg: b1 con out assignemnt def neigh tria} and then, later,  
	$\hat{v}_3$ and $\hat{v}_4$ 
	coincide with $\hat{v}$ and $\hat{v}'$ 
	at that line. 
	At that point one of the vertices of $G$ contained 
	in the triangles corresponding to $\hat{v}_1$ and $\hat{v}_2$ 
	is among $x \in V(T_{\hat{v}}) \cup V(T_{\hat{v}'})$. 
	So with the same arguments as above \autoref{alg: b1 con out assignment}  
	outputs 
	``$G$ is not $M$-free'' in line~\ref{not_M-free:1}.
	
	We complete the proof by showing  that $\widetilde{G}$ contains a copy of
	$M_1$ or $M_1^0$ if \autoref{alg: b1 con out assignment} 
	outputs ``$G$ is not $M$-free''     in line~\ref{not_M-free:1}.
	
	Let $\hat{v}_1$ and $\hat{v}_2$ be
	the vertices $\hat{v}$ and $\hat{v}'$ 
	in line~\ref{alg: b1 con out assignemnt def neigh tria} at the 
	last execution of this line before the algorithm 
	stops. 
	Clearly $\hat{v}_1$ and $\hat{v}_2$ are neighbored.
	Since \autoref{alg: b1 con out assignment} stops,  there has to be a vertex 
	$x \in V(T_{\hat{v}}) 
	\cup V(T_{\hat{v}'})$ 
	such that~$\Delta(x)$ was set to either 
	$\hat{v}$ or $\hat{v}'$ in line~\ref{alg: b1 con out assignment set delta} 
	in a previous iteration. 
	Let $v_{3}$ and $v_{4}$ be the vertices 
	$\hat{v}$ or $\hat{v}'$  at that previous iteration.
	Clearly, $x$ is contained in $T_{\hat{v}_1}$ or $T_{\hat{v_2}}$ 
	and also contained in $T_{\hat{v_3}}$ and~$T_{\hat{v_4}}$, 
	so at least one of $v_{1}$ and $v_{2}$ 
	is neighbored to or touching at least one of~$v_{3}$ and~$v_{4}$.
	If the later vertices  are neighbored, then $\widetilde{G}$ contains a copy 
	of $M_1$.
	Otherwise $\widetilde{G}$ contains a copy of~$M_1^0$.
\end{proof}

Notice that \autoref{lem: assignemnt one_outputOrAssignment}
and \autoref{lem: assignemnt one_outputCorrect} 
imply
that if $G$ is $M$-free, then in step one 
\autoref{alg: b1 con out assignment} serves 
exactly the vertices in $A$ (see \autoref{lem: b1 max out plan partition 
vertices deg 3}).
In the next two lemmata we consider step two 
%and three 
of 
the algorithm 
in more detail. % We first prove a simple result.
\begin{lemmaMy}\label{servedstep2_implies_surrounding_notneighbored}
	Let $G$ be a maximal outerplanar graph with  almost-dual
	$\widehat{G}$ and  reduced graph $\widetilde{G}$. Assume that  
	\autoref{alg: 
		b1 con
		out assignment} serves a vertex $\hat{v}$ in $\widehat{V}_3$ in step 
	two.
	Then $\hat{v}$  is not neighbored, but it is in the surrounding of   
	neighbored vertices. 
\end{lemmaMy}
\begin{proof}
	Clearly, $\hat{v}$ is not neighbored, otherwise it would have been already
	served at step one (see \autoref{lem: assignemnt one_outputOrAssignment}). 
	%If $\hat{v}$ would be neighbored, 
	%due to \autoref{lem: assignemnt one_outputOrAssignment} it would have been
	%served already in step one and hence it can not be served again in step 
	%two.
	%Therefore $\hat{v}$ is not neighbored.
	We show that $\hat{v}$ is in the surrounding of neighbored     vertices.
	Consider the vertex  $\hat{v}''$ in line~\ref{alg: b1 con out assignment 
		choice
		min level}  when $\hat{v}$ is served in  step two. 
	Then $\hat{v}$ and  $\hat{v}''$ are touching. If $\hat{v}''$ is a
	neighbored vertex, then $\hat{v}$ is in the surrounding of $\hat{v}''$ and
	we are done. 
	Otherwise, %if     $\hat{v}''$ is a not a neighbored vertex,
	$\hat{v}''$ has been served in step two and   
	the above argument can be  inductively repeated  with $\hat{v}''$ in the 
	role of $\hat{v}$.
	%until a neighbored vertex is reached.
	Since   $level(\hat{v}'') <      level(\hat{v})$, a neighbored vertex,
	i.e.\ a vertex with level equal to $0$, is reached after a finite number of 
	repetitions.
	% the level decreases any time we repeat the inductive
	%    argument. 
	% Finally, considering
	% that in step two only neighbored vertices have level equal to $0$, 
	%we conclude that after  a  finite number of repetitions of the inductive 
	%argument a neighbored vertex is reached.
	%Clearly,  any vertex   touching another  
	%vertex
	%in the surrounding of a neighbored pair of vertices also belongs to the   
	%surrounding
	%    of these neighbored vertices. 
	Therefore~$\hat{v}$ is in the
	surrounding of neighbored vertices, but not neighbored.
\end{proof}

\begin{lemmaMy}
	\label{lem: assignemnt two_outputOrAssignment}
	Let $G$ be a maximal outerplanar graph with  almost-dual
	$\widehat{G}$ and  reduced graph $\widetilde{G}$. Assume that 
	\autoref{alg: b1 con out assignment} has not terminated with ``$G$ is not 
	$M$-free'' in 
	step
	one. 
	Then in step two it  either 
	outputs ``$G$ is not $M$-free''  
	or it serves exactly the vertices in $\widehat{V}_3$ which are not 
	neighbored, but 
	in the surrounding 
	of neighbored vertices.
\end{lemmaMy}
\begin{proof}
	Assume first that \autoref{alg: b1 con
		out assignment} does not output  ``$G$ is not $M$-free'' in step two. 
	Thus  the algorithm has completed step two, i.e.\ it has completed 
	the
	first     run of the while loop in lines~\ref{alg: b1 con out assignment 
		step 2 start} to~\ref{alg: b1 con out assignment step 2 stop}. 
	%	We show that 
	%	under this 
	%	assumption
	%	 all vertices in $\widehat{V}_3$ which are not neighbored, but
	%	in the surrounding of
	%   neighbored vertices, are served
	%  in step two. 
	\autoref{servedstep2_implies_surrounding_notneighbored} implies that each
	vertex served in step two is not neighbored, but in the surrounding of 
	neighbored vertices. 
	%
	%Let $\hat{v}$ be a vertex which 
	%    but is  to any other vertex and  
	Now we consider a vertex~$\hat{v}$, that 
	is not neighbored, but in the 
	surrounding of 
	neighbored vertices. Assume by contradiction   that it  is not  served in 
	step two.
	Clearly, $\hat{v}$ has not been
	served in step one,  due to     \autoref{lem: assignemnt 
	one_outputOrAssignment}.
	Thus $\hat{v}$ is not served yet at the end of step two.
	Consider  an STT $T_{\hat{v}_{1}}$, $\dots$,
	$T_{\hat{v}_{k}}$ such that
	$\hat{v} = \hat{v}_{k}$ and $\hat{v}_{1}$ belongs to  a neighbored pair of
	vertices. Obviously,     $\hat{v}_{1}$ was already served at the beginning 
	of step
	two. 
	%We    assume  that $\hat{v}$ is not served  at the end of step two and show
	%that this leads to a contradiction. 
	Observe that  $k \geqslant i
	\geqslant 2$ holds  for  the smallest  index   $i\in \{1,2,\ldots,k\}$ such 
	that  the vertex
	$\hat{v}_{i}$ is not served at the end of step two. 
	Thus,  at the end of step two $\hat{v}_{i-1}$ is a served 
	vertex, $\hat{v}_{i-1}$
	and $\hat{v}_{i}$ are touching and $\hat{v}_{i}$ is not served, implying
	that the while-condition in line~\ref{alg: b1 con out assignment step 2
		start} is fulfilled, a contradiction to the completion of  step two.
	
	On the other hand,   if \autoref{alg: b1 con
		out assignment}  outputs  ``$G$ is not $M$-free'' in step two, this is
	because the if condition in line~\ref{precede_not_M-free:2} is fulfilled, 
	meaning that there is
	an unserved vertex $\hat{v}$ touching a served vertex $\hat{v}''$ (see
	line~\ref{alg: b1 con out assignment choice min level}).
	According to  \autoref{lem: assignemnt one_outputOrAssignment} and 
	\autoref{servedstep2_implies_surrounding_notneighbored}
	$\hat{v}''$ is neighbored %(if it was  served in step one)
	or lies in the surrounding of a neighbored vertex. %(if it was served in 
	%step two). 
	Consequently, vertex $\hat{v}$ %which is touching  $\hat{v}''$
	also lies in the surrounding of a neighbored vertex, while being unserved.
	% Hence     the    either-or statements of the lemma exclude each other.  
\end{proof}

%\begin{lemmaMy}
%    \label{lem: assignemnt three_outputOrAssignment}
%    Consider a maximal outerplanar graph  $G$  with  almost-dual
%    $\widehat{G}$ and  reduced graph $\widetilde{G}$. Assume that \autoref{alg:
%        b1 con out assignment}  does not terminate with ``$G$ is not $M$-free''
%    in step one and two. 
%    Then in step three it either 
%    outputs ``$G$ is not $M$-free''
%    or it serves exactly the vertices in $\widehat{V}_3$ which are neither 
%    neighbored, 
%    nor in the surrounding of neighbored vertices.
%\end{lemmaMy}
%\begin{proof}
%    This results from  
%    \autoref{lem: assignemnt well definied and properties},  
%    \autoref{lem: assignemnt one_outputOrAssignment} 
%    and  \autoref{lem: assignemnt two_outputOrAssignment}.
%\end{proof}

Under the premise that \autoref{alg: b1 con out assignment} correctly 
recognizes whether a graph is $M$-free, 
\autoref{lem: assignemnt two_outputOrAssignment}
implies
that if $G$ is $M$-free, then in step two 
\autoref{alg: b1 con out assignment} serves 
exactly the vertices in $B$ (see \autoref{lem: b1 max out plan partition 
vertices deg 3}).
Furthermore, under this premise, it follows from 
\autoref{lem: assignemnt well definied and properties},  
\autoref{lem: assignemnt one_outputOrAssignment} 
and  \autoref{lem: assignemnt two_outputOrAssignment}
that 
if~$G$ is $M$-free, then in step three 
\autoref{alg: b1 con out assignment} serves 
exactly the vertices in~$C$ (see \autoref{lem: b1 max out plan partition 
vertices deg 3}).
The next two lemmata are used to establish this premise.

\begin{lemmaMy} \label{lem: b1 max out plan no large reduced cycle}
	Let $G$ be an $M$-free  maximal outerplanar graph,
	with  almost-dual graph $\widehat{G}$  and   reduced graph  $\widetilde{G}$.
	Then $\widetilde{G}$ does not contain a reduced CTNT $T_{1}$, \dots, 
	$T_{k}$, for any  $k \geqslant 4$. Moreover,
	if a  reduced CTNT of length~$3$ is contained in $\widehat{G}$,   then
	all the triangles in that CTNT share a common vertex. 
\end{lemmaMy}
\begin{proof}
	We  prove \autoref{lem: b1 max out plan no large reduced cycle} by
	contradiction. Assume that $C = (T_{1}, \dots, T_{k})$ is a 
	reduced CTNT
	in $\widetilde{G}$ for some $k\geqslant 4$.
	Due to \autoref{lem: b1 max out plan partition vertices deg 3}
	touching triangles can be in the surrounding of at most one pair of 
	neighbored triangles. 
	Therefore at most two consecutive triangles can be neighbored in a CTNT,
	all other consecutive triangles  are touching.
	For each $1 \leqslant i \leqslant k-1$ let $v_{i}$ be a vertex shared
	by $T_{i}$ and $T_{i+1}$ and  let 	$v_{k}$
	be a vertex shared by $T_{k}$ and $T_{1}$. 
	%Since $T_{i}$ and $T_{i+1}$ are neighbored or touching, they share at 
	%least 
	%one 
	%vertex $v_{i}$ for each $1 \leqslant i \leqslant k-1$. Furthermore let 
	%$v_{k}$
	%be a vertex shared by $T_{k}$ and $T_{1}$. 
	Clearly, $v_{1}$, \dots, $v_{k}$ are pairwise distinct (otherwise 
	$C$  would not be reduced) and  form a cycle $C'$ of length
	$k\geqslant 4$ in $\widetilde{G}$ and hence also in $G$. 
	Since $G$ is maximal outerplanar, all  the vertices of the triangles
	$T_{1}$, \dots, $T_{k}$ which are not in $C'$ lie on the outside of
	$C'$ and all faces of $G$  within $C'$  are triangles. 
	Now we distinguish two cases: (i)  there are two consecutive neighbored 
	triangles in $C$ and (ii)  there are no two consecutive 
	neighbored  triangles in~$C$.
	
	In   Case (i)  let $i^{\ast}$ such that $v_{i^{\ast}}$ is  the vertex of 
	$C'$ which is shared by 
	the 
	neighbored triangles and  let $v_{i^{\ast}}'$ be the other vertex shared by
	the neighbored triangles. Notice that   $v_{i^{\ast}}'$ is not in $C'$. 
	It is easy to see that if we replace $v_{i^{\ast}}$ by $v_{i^{\ast}}'$ 
	in~$C'$ we get another cycle $C''$ in $G$ such that $v_{i^{\ast}}$ is in 
	the 
	interior of the
	cycle~$C''$, which is a contradiction to the outerplanarity of $G$. 
	
	In  Case (ii)  all faces of $G$  within the cycle $C'$
	are triangles and all of them are center triangles of some copy of $S_3$ in 
	$G$,
	thus all of them are present also in $\widetilde{G}$. But this implies that 
	a
	copy  of $M_{1}$ is contained in $\widetilde{G}$, a contradiction.
	
	Thus % in both cases we have a contradiction and this proves that
	there are no	reduced CTNT of length $k$ with $k\geqslant 4$ in 
	$\widetilde{G}$.
	\medskip
	
	Now consider a reduced CTNT $C=(T_1, T_2, T_3)$ of length $3$ in 
	$\widetilde{G}$.
	We distinguish again the  cases (i) and (ii).
	
	In Case (i) let w.l.o.g.\ $T_1$ and $T_2$ 
	be two  neighbored triangles in $C$. 
	Then~$T_3$ must share a vertex $a$ with  $T_1$ and a vertex
	$b$ with  $T_2$. 
	If $a\neq b$ and  $|\{a,b\} \cap V(T_1)\cap
	V(T_2)| \in \{0,2\}$, then  $G$ would not be outerplanar, a contradiction.
	If $a\neq b$ and  $|\{a,b\} \cap V(T_1)\cap
	V(T_2)| = 1$, then  $\widetilde{G}$ would contain a copy of~$M_1$, which 
	contradicts $G$ being $M$-free. 
	So $a = b$ is shared by all triangles of $C$.
	
	In Case (ii), if there are no neighbored triangles in $C$, then $T_1$, 
	$T_2$ and~$T_3$ are
	pairwise touching. Again, if they would not all three touch at a common
	vertex, then $\widetilde{G}$ would contain a copy of
	$M_1$, contradicting $G$ being  $M$-free. 
\end{proof}

\begin{lemmaMy}
	\label{lem: assignemnt twothree_outputCorrect}
	Let $G$ be a maximal outerplanar graph with  almost-dual
	$\widehat{G}$ and  reduced graph $\widetilde{G}$.
	Then \autoref{alg: b1 con out assignment} 
	outputs ``$G$ is not $M$-free''
	in line~\ref{not_M-free:2} iff $\widetilde{G}$ contains a copy of 
	$M_1^\ell$ for some $\ell \geqslant 1$ and no copy of $M_1$ and $M_1^0$.
\end{lemmaMy}
\begin{proof}
	First, let $G$ be such that $\widetilde{G}$ contains a copy of $M_1^\ell$
	for some $\ell \geqslant 1$ and no copy of $M_1$ and $M_1^0$  (see
	\autoref{fig:m1} and  \autoref{fig:m1_n}).  
	We show that \autoref{alg: b1 con out assignment} 
	outputs ``$G$ is not $M$-free''
	in line~\ref{not_M-free:2}.
	%    \todo[inline]{Reni: I am including a ``direct'' proof}
	Let us denote by  $\hat{a}_1$ and $\hat{a}_2$ ($\hat{b}_1$ and
	$\hat{b}_2$) the vertices in $\widehat{V}_3$
	corresponding to the  triangles with vertices $\{a_1,a_2,a_3,a_4\}$
	($\{b_1,b_2,b_3,b_4\}$), and by  $\hat{v}_i$ the vertex in $\widehat{V}_3$
	corresponding to the triangle with vertices
	$\{c_{2i-2},c_{2i-1},c_{2i}\}$,    $1\leqslant i \leqslant\ell$,  
	in  an arbitrary but fixed 
	copy of 
	$M_1^\ell$ in $\widetilde{G}$.
	Due to \autoref{lem: assignemnt one_outputOrAssignment} and \autoref{lem: 
		assignemnt one_outputCorrect} 
	the pairs of neighbored vertices $\hat{a}_1$, $\hat{a}_2$ and $\hat{b}_1$,
	$\hat{b}_2$   have been served in step
	one and    the
	algorithm has not terminated  with ``$G$ is not $M$-free'' in step one.
	According to 
	\autoref{lem: assignemnt one_outputOrAssignment} and 
	\autoref{lem: assignemnt two_outputOrAssignment}  \autoref{alg: b1 con out 
	assignment}  either 
	outputs ``$G$ is not $M$-free''  in step two
	or  all vertices~$\hat{v}_i$ are served at the end of step two. 
	
	%  and the fact that all vertices~$\hat{v}_i$ are in the surrounding 
	% of neighbored vertices (and may or may not be neighbored).
	%
	%    Notice now   that as long as there is one unserved vertex $\hat{v}_i$
	%    the while condition in line~\ref{alg: b1 con
	%        out assignment step 2 start} is fulfilled. Indeed, let $i$ be the
	%    smallest index in $\{1,\ldots,\ell\}$ such that $\hat{v}_i$ is
	%    unserved at the end of step two. Consider now   $\hat{v}=\hat{v}_i$ 
	%in  
	%    line~\ref{alg: b1 con
	%        out assignment step 2 start} and observe that since~$\hat{v}_i$ is
	%    touching a served vertex (this is $\hat{v}_{i-1}$, if $i\geqslant 2$,  
	%or 
	%    one of the neighbored vertices
	%    $\hat{a}_1$ and $\hat{a}_2$, if $i=1$),  $\min\{level(\Delta(x))\colon 
	%x 
	%\in
	%    V(T_{\hat{v}})\}\neq \infty$ holds. So step two of \autoref{alg: b1 
	%con out
	%        assignment} is not over as long as there are  unserved 
	%        vertices~$\hat{v}_i$.
	
	There is noting to show    if the algorithm  stops with ``$G$ is not 
	$M$-free''in step two.
	%in
	%line~\ref{not_M-free:2} prior to the moment where the last unserved  
	%vertex 
	%$\hat{v}_i$,
	%$1\leqslant i \leqslant \ell$, is handled in line~\ref{a_not_assigned}, we 
	%are
	%done.
	So assume w.l.o.g. that all vertices $\hat{v}_i$, $1\leqslant i
	\leqslant\ell$, are served at the end of
	step 2 and consider the moment when  line~\ref{a_not_assigned} is 
	executed for
	the last such  vertex $\hat{v}_i$ with 
	$\hat{v}=\hat{v}_i$.  It can be shown inductively 
	that the  vertices  $c_{2i-2}$  and  $c_{2i}$ of
	$V(T_{\hat{v}_i})$ have already been assigned to other vertices. % 
	%(potentially 
	%to $\hat{v}_{i-1}$, if $i \geqslant 2$ or to one
	%of  
	%$\hat{a}_1$ and $\hat{a}_2$, if $i=1$)
	%and also the vertex $c_{2i}$ 
	% of~$V(T_{\hat{v}_i})$ has already been assigned to another vertex 
	%(potentially 
	%to $\hat{v}_{i+1}$, if $i \geqslant 2$ or to one of 
	%$\hat{b}_1$ and $\hat{b}_2$, if $i=\ell$).
	Hence $\Delta(c_{2i-2})\neq NULL$ and $\Delta(c_{2i})\neq     NULL$,
	implying  that the if 
	condition in line~\ref{precede_not_M-free:2}
	is fulfilled. Thus the algorithm stops with  ``$G$ is not $M$-free'' in
	line~\ref{not_M-free:2}.

	Next we assume   that 
	\autoref{alg: b1 con out assignment} 
	outputs ``$G$ is not $M$-free''
	in line~\ref{not_M-free:2}.
	If $\widetilde{G}$ would contain a copy of $M_1$ or $M_1^0$, 
	then
	\autoref{alg: b1 con out assignment} 
	would output ``$G$ is not $M$-free''
	already in line~\ref{not_M-free:1} (according to   \autoref{lem: assignemnt
		one_outputCorrect}).
	We complete the proof by showing that 
	$\widetilde{G}$ contains a copy of $M_1^\ell$ for some $\ell \geqslant 1$.

	Let $\hat{v}$ be the vertex for which the if condition in 
	line~\ref{precede_not_M-free:2}
	is fulfilled right before termination  with  ``$G$ is not $M$-free'' 
	in
	line~\ref{not_M-free:2}. Consider  two vertices $a \neq x \in 
	V(T_{\hat{v}})$,
	with $\Delta(a)\neq NULL $ and  $\Delta(x)\neq NULL $ (see
	lines~\ref{alg: b1 con out assignment choice min level} 
	and~\ref{precede_not_M-free:2}). Thus
	$a$ and $x$ have  been assigned to some already served vertices, say
	$\hat{a}$ and $\hat{x}$ in $\widehat{V}_3$. 
	Notice that $\hat{a}\neq \hat{x}$,  otherwise
	$\hat{a}=\hat{x}$  and $\hat{v}$ would be neighbored to each
	other and  $\hat{v}$ would have already been served in step one due to
	\autoref{lem: assignemnt one_outputOrAssignment}. 
	Notice, moreover, that  $\hat{a}$ and $\hat{v}$ can not be 
	neighbored due to the outerplanarity of~$G$.
	Furthermore, 
	$\hat{a}$ and $\hat{x}$ do not touch, otherwise $\widetilde{G}$ would
	contain a copy of~$M_1$. Hence $T_{\hat{a}}$ and $T_{\hat{x}}$ do not share 
	a vertex.
	
	The vertices $\hat{a}$ and $\hat{x}$ have been served in step one or two, 
	and
	hence each of them is neighbored or in the surrounding of
	some neighbored vertices (see \autoref{lem: assignemnt
		one_outputOrAssignment} and
	\autoref{servedstep2_implies_surrounding_notneighbored}).
	Hence there exist two STT $S_1$ and $S_2$ such that 
	(i) $\hat{a}$ is the vertex preceding $\hat{v}$ in $S_1$, 
	(ii) $\hat{x}$ the vertex preceding $\hat{v}$ in $S_2$,
	and both $S_1$ and $S_2$
	(iii) end in $\hat{v}$, 
	(iv) start in a neighbored triangle and
	(v) contain as few triangles as possible.
	If  the only vertex in $S_1$ touching or being neighbored to some vertex 
	different from
	$\hat{v}$ in~$S_2$ is $\hat{v}$, then
	the union of $S_1$ and $S_2$ together with the vertices, to which the start 
	vertices of $S_1$ and $S_2$ are neighbored, 
	build an $M_1^{\ell}$ with $\ell\geqslant 1$. 
	
	Otherwise,  consider  the subsequence $S_3$  of $S_1$ which  ends at
	$\hat{v}$ and starts at the last vertex
	$\hat{y}\neq \hat{v}$  which touches or is neighbored to some vertex 
	different from   $\hat{v}$
	in $S_2$.
	Then $S_3$ and $S_2$ contain a CTNT. Since $\hat{a}$ and
	$\hat{x}$ do not touch and are not neighbored to each other there is a 
	reduced CTNT of the above CTNT containing
	$\hat{v}$, $\hat{a}$,  $\hat{x}$ and at least one other vertex. The
	existence of such a reduced CTNT of length at least $4$ implies that $G$ is
	not $M$-free (see \autoref{lem: b1 max out plan no large reduced
		cycle}). The latter statement
	together with the absence of copies  of~$M_1$ and $M_1^0$
	in $\widetilde{G}$ implies the existence of a copy of $M_1^{\ell}$, for 
	some $\ell \geqslant 1$, in~$\widetilde{G}$. 
\end{proof}

%The results of Lemmas~\ref{lem: assignemnt well definied and 
%properties}-\ref{lem: assignemnt twothree_outputCorrect} can be used to prove 
%the following theorem.
%\begin{theoremMy}
%    \label{cor:Recognition max out plan B1 quadratic}
%    Let $G$ be a maximal outerplanar graph with $n$ vertices. \autoref{alg: b1
%      con out assignment}  decides
%    in $O(n^2)$ time whether $G$ is $M$-free. 
%    If $G$ is $M$-free 
%    then \autoref{alg: b1 con out assignment}
%    returns an assignment
%    of pairs of
%    vertices of $G$ to vertices of $\widehat{V}_3$     in $O(n^2)$ time. In
%    particular this
%    assignment has two properties: (1) each vertex  $\hat{v}\in \widehat{V}_3$ 
%gets assigned
%    to central vertices of the copy of $S_3$ in $G$ which  corresponds to
%    $\hat{v}$, and (2) no vertex of $G$ gets assigned to more than one vertex
%    in $\widehat{G}$.
%\end{theoremMy}

\autoref{lem: assignemnt well definied and 
properties} is one of many ingredients to prove 
the following theorem.
\begin{theoremMy}
	\label{cor:Recognition max out plan B1 quadratic}
	Let $G$ be a maximal outerplanar graph. 
	\autoref{alg: b1 con out assignment} correctly decides
	whether $G$ is $M$-free. 
\end{theoremMy}
\begin{proof}%[\bf Proof of \autoref{cor:Recognition max out plan B1 quadratic}]
	As an immediate consequence of \autoref{lem: assignemnt one_outputCorrect} 
	and 
	\autoref{lem: assignemnt twothree_outputCorrect} \autoref{alg: b1
		con out assignment} outputs ``$G$ is not $M$-free'' 
	if and only if $G$ is not $M$-free.
\end{proof}

It can be shown that 
\autoref{alg: b1 con out assignment} can be implemented to run in 
$O(n^2)$ time, where  $n$ is the order of the input graph $G$. 
Thus, it is possible to decide in $O(n^2)$ time whether  a maximal outerplanar
graph $G$ of order $n$ is $M$-free.  Moreover, in the positive case an  assignment with the properties stated in 
\autoref{lem: assignemnt well definied and properties} can be constructed in
$O(n^2)$ time.
\autoref{alg: b1 con out} uses such an  assignment 
to   construct a $B_1$-EPG representation of  
a maximal outerplanar $M$-free 
graph $G$. 

\begin{algorithm}[tbp]
	\footnotesize
	\caption{Construct a $B_1$-EPG representation for a maximal outerplanar 
	$M$-free
		graph $G$}     \label{alg: b1 con out}
	\Input{A maximal  outerplanar   $M$-free graph $G$,  its almost-dual 
	$\widehat{G}$, its reduced graph
		$\widetilde{G}$ and an assignment $\textAlgo{assigned} \colon 
		\widehat{V}_3 \to 
		V(G)\times
		V(G)$    as computed by   \autoref{alg: b1 con out assignment}}
	\Output{A $B_1$-EPG representation of $G$}
	\begin{algorithmic}[1]
		\Procedure{B1\_Mfree\_Max\_Outerplanar}{$G$, $\widehat{G}$, 
		$\widetilde{G}$, $\textAlgo{assigned}$}
		%\Procedure{Assignment}{$G$}
		%\Procedure{Explore}{$v_i$}
		%\Procedure{a}{b}
		\State Choose an arbitrary vertex $\hat{v}$ of degree $1$ from 
		$V(\widehat{G})$
		\State Let $\hat{u}$ be the neighbor of $\hat{v}$ in $V(\widehat{G})$
		\State Let $\{a,b\} := V(T_{\hat{u}}) \cap V(T_{\hat{v}})$ and 
		$\{c\} :=   V(T_{\hat{v}})\setminus\{a,b\}$ 
		\State Draw the paths $P_a$, $P_b$ and $P_c$ and set
		$\mathcal{R}_{\hat{u}}$ 
		as depicted in \autoref{fig:const_out_tri_in_b1_4}(c) 
		
		\State Set $ToConstruct := \{(\hat{u},\hat{v}, \mathcal{R}_{\hat{u}})\}$
		\While{$ToConstruct \neq \emptyset$}
		\State Let $(\hat{u},\hat{v}, \mathcal{R}_{\hat{u}}) \in
		ToConstruct$ \label{alg: b1 con out treat one element start}
		\State Set $ToConstruct := ToConstruct\setminus 
		\{(\hat{u},\hat{v},\mathcal{R}_{\hat{u}})\}$
		\label{alg: b1 con out treat one element start1}
		\State Let $\{a,b\} := V(T_{\hat{u}}) \cap
		V(T_{\hat{v}})$\label{alg:start_processing_triple}
		\State Let $\{c\} :=   V(T_{\hat{v}})\setminus\{a,b\}$ and $\{d\} :=   
		V(T_{\hat{u}})\setminus\{a,b\}$

		\If{ $\deg(\hat{u}) = 3$}
		
\State Let $a' \in \textAlgo{assigned}(\hat{u}) \cap \{a,b\}$ and $\{b'\} = 
\{a,b\} \setminus \{a'\}$
\State Let $\hat{w} \in V(\widehat{G})$ and $e\in V(G)\setminus\{b'\}$ 
such that $\{a',d,e\} = V(T_{\hat{w}})$
\State Let $\hat{w}' \in V(\widehat{G})$ and $f \in 
V(G)\setminus\{a'\}$ such that $\{b',d,f\} = V(T_{\hat{w}'})$

\If{$\textAlgo{assigned}(\hat{u}) = \{a',b'\}$}\label{can_this_be}
\longState{Extend the paths $P_{a'}$ and $P_{b'}$, draw the path
	$P_d$ in the region $\mathcal{R}_{\hat{u}}$ and set
	$\mathcal{R}_{\hat{w}}$, $\mathcal{R}_{\hat{w}'}$ as depicted
	in  \autoref{fig:const_out_tri_in_b1_2}(b)}
\Else{ ($\textAlgo{assigned}(\hat{u}) = \{a',d\}$)}
\longState{Extend the paths $P_{a'}$ and $P_{b'}$, draw the
	path $P_d$ in the region $\mathcal{R}_{\hat{u}}$ and set
	$\mathcal{R}_{\hat{w}}$, $\mathcal{R}_{\hat{w}'}$ as depicted
	in  \autoref{fig:const_out_tri_in_b1_2}(c)}
\EndIf                
		\State $ToConstruct := ToConstruct \cup 
		\{(\hat{w},\hat{u},\mathcal{R}_{\hat{w}}), 
		(\hat{w}',\hat{u},\mathcal{R}_{\hat{w}'})\}$            
		\ElsIf{ $\deg({\hat{u}}) = 2$}           
		\State Let $\hat{w} \neq \hat{v}$ be the other neighbor of $\hat{u}$ in 
		$\widehat{G}$
		\State Let $\{a'\}:=  V(T_{\hat{w}})\cap \{a,b\}$ and $\{b'\}:=
		\{a,b\} \setminus \{a'\}$  \label{a'_b'_introduced}
		\State Let $e$ such that $\{a',d,e\} = V(T_{\hat{w}})$
		\longState{Extend the paths $P_{a'}$ and $P_{b'}$, draw the
			path $P_d$ in the region $\mathcal{R}_{\hat{u}}$  and set 
			$\mathcal{R}_{\hat{w}}$  as depicted in 
			\autoref{fig:const_out_tri_in_b1_3}(b)}
		\State $ToConstruct := ToConstruct \cup 
		\{(\hat{w},\hat{u},\mathcal{R}_{\hat{w}})\}$
		\Else{ $\deg({\hat{u}}) = 1$}
		\longState{Extend the paths $P_a$ and $P_b$ and draw the path $P_d$ in 
		the region
			$\mathcal{R}_{\hat{u}}$ as depicted in
			\autoref{fig:const_out_tri_in_b1_4}(b)}           
		\EndIf \label{alg: b1 con out treat one element end}
		\EndWhile
		\EndProcedure
	\end{algorithmic}
\end{algorithm}
%-------------------------------------------------------------------------------

%With the help of \autoref{lem: assignemnt well definied and properties} and 
%\autoref{cor:Recognition max out plan B1 quadratic} 

\begin{figure}[tb]
    \centering
    \begin{minipage}[b]{0.25\linewidth}
        \centering
        %\missingfigure{fig out tri in b1 const 1 vertices with deg three}
        \begin{center}
\begin{tikzpicture}
  [scale=.6]
  
  %a' - a
  %f - b
  %b-d
  %a-f
  %c'-g
  %b'-h  
  
  \node[vertex] (ve) at (1,1) {$e$};
  \node[vertex] (vd) at (3,1) {$d$};
  \node[vertex] (va) at (2,2.7) {$a'$};
  \node[vertex] (vb) at (4,2.7) {$b'$};
  \node[vertex] (vf) at (5,1) {$f$};
  \node[vertex] (vc) at (3,4.4) {$c$};
  
  \node[vertex3,label={[label distance=-5pt,scale=0.8]below left:{$\hat{v}$}}] 
  (vabc) at (3,3.5) {};

  \node[vertex3,label={[label distance=-5pt,scale=0.8]below left:{$\hat{w}$}}] 
  (vade) at (2,1.9) {};
  \node[vertex3,label={[label distance=-5pt,scale=0.8]above left:{$\hat{u}$}}] 
  (vabd) at (3,1.9) {};
  \node[vertex3,label={[label distance=-5pt,scale=0.8]below 
  left:{$\hat{w}'$}}] 
  (vbdf) 
  at (4,1.9) {};
  
  \node (vae) at (0.7,1.9) {};
  \node (ved) at (2,0.2) {};

  \node (vdf) at (4,0.3) {};
  \node (vbf) at (5.3,1.9) {};
  
  \node (vac) at (2,4) {};
  \node (vcb) at (4,4) {};

  \foreach \from/\to in {ve/vd,ve/va,vd/va,vd/vb,va/vb,vd/vf,vb/vf,va/vc,vb/vc}
  \draw (\from) -- (\to);
  
  \foreach \from/\to in
  {vade/vabd,vabd/vabc,vabd/vbdf}
  \draw[dashed,thick] (\from) -- (\to);
  
  \foreach \from/\to in
  {vabc/vac,vabc/vcb,vbdf/vbf,vbdf/vdf,vade/ved,vade/vae}
  \draw[dotted,thick] (\from) -- (\to);

\end{tikzpicture}
\end{center}
        (a)
    \end{minipage}
    \quad
    \begin{minipage}[b]{0.325\linewidth}
        \centering
        %\missingfigure{fig out tri in b1 const 1 vertices with deg three}
        \begin{center}
\begin{tikzpicture}
  [scale=.7]

	%a' - b
	%b - a
	%a - d
	%c' - e
	%b' - f
    
    \fill[gray!20] 
    (0.3,4) node[above,gray] {$\mathcal{R}_{\hat{u}}$} --(0.3,1)  
    to[out=-30, in=210] 
    (5.5,1) -- (5.5,4) 
    to[out=150, in=30] 
    (0.3,4); 
    
  \fill[fill=gray!70] (4.5,3) ellipse (0.8 and 0.5);
  \fill[fill=gray!70] (1.35,3) ellipse (0.8 and 0.5);
  \node[above,gray] at (4.5,3.5) {$\mathcal{R}_{\hat{w}'}$};
  \node[above,gray] at (1.35,3.5) {$\mathcal{R}_{\hat{w}}$};

  \draw (1.35,3) -- (4.5,3) node[above,pos=0.5] {$P_{d}$};  
  \draw[dotted] (0.85,3) -- (1.35,3);
  \draw[dotted] (4.5,3) -- (5,3);
         
  \draw (3,2) -- (3,2.85) -- (4.5,2.85) node[below,pos=0.25] {$P_{b'}$};
  \draw[dotted] (4.5,2.85) -- (5,2.85);
  \draw[dashed] (3,0.85) -- (3,2);

  \draw (1.35,2.85) -- (2.85,2.85) -- (2.85,2) node[left,pos=0.5] {$P_{a'}$};
  \draw[dotted] (1.35,2.85) -- (0.85,2.85);
  \draw[dashed] (2.85,0.85) -- (2.85,2);

\end{tikzpicture}
\end{center}
        (b)
    \end{minipage}
    \quad
    \begin{minipage}[b]{0.325\linewidth}
        \centering
        %\missingfigure{fig out tri in b1 const 1 vertices with deg three}
        \begin{center}
\begin{tikzpicture}
  [scale=.7]
  
  %a' - b
  %b - a
  %a - d
  %c' - e
  %b' - f
  
    \fill[gray!20] 
    (0.3,4) node[above,gray] {$\mathcal{R}_{\hat{u}}$} --(0.3,1)  
    to[out=-30, in=210] 
    (5.5,1) -- (5.5,4) 
    to[out=150, in=30] 
    (0.3,4);  
  
  \fill[fill=gray!70] (4.5,3) ellipse (0.8 and 0.5);
  
  \fill[fill=gray!70] (2.85,1.35) ellipse (0.5 and 0.8);
  \node[above,gray] at (4.5,3.5) {$\mathcal{R}_{\hat{w}'}$};
  \node[right,gray] at (3.35,1.35) {$\mathcal{R}_{\hat{w}}$};

  \draw (2,3) -- (4.5,3) node[above,pos=0.5] {$P_{b'}$};  
  \draw[dashed] (0.85,3) -- (2,3);
  \draw[dotted] (4.5,3) -- (5,3);
  
  \draw (3,1.35) -- (3,2.85) -- (4.5,2.85) node[below,pos=0.25] {$P_{d}$};
  \draw[dotted] (4.5,2.85) -- (5,2.85);
  \draw[dotted] (3,1.35) -- (3,0.85);

  \draw (2,2.85) -- (2.85,2.85) -- (2.85,1.35) node[left,pos=0.3] {$P_{a'}$};
  \draw[dashed] (0.85,2.85) -- (2,2.85);
  \draw[dotted] (2.85,1.35) -- (2.85,0.85);

\end{tikzpicture}
\end{center}
        (c)
    \end{minipage}
    \caption{(a) A part of a graph $G$ with $\widehat{G}$ and (b), (c) how 
    \autoref{alg: b1 con out} constructs its $B_{1}$-EPG
        representation. Dotted edges may or may not exist.}
    \label{fig:const_out_tri_in_b1_2}
\end{figure}

\begin{figure}[tb]
    \centering
    \begin{minipage}[b]{0.45\linewidth}
        \centering
        %\missingfigure{fig out tri in b1 const 0 vertices with deg three graph}
        \begin{center}
\begin{tikzpicture}
  [scale=.6]
  
  %a' - a
  %f - b
  %b-d
  %a-f
  %c'-g
  %b'-h  
  
  \node[vertex] (ve) at (1,1) {$e$};
  \node[vertex] (vd) at (3,1) {$d$};
  \node[vertex] (va) at (2,2.7) {$a'$};
  \node[vertex] (vb) at (4,2.7) {$b'$};
  \node[vertex] (vc) at (3,4.4) {$c$};
  
  \node[vertex3,label={[label distance=-5pt,scale=0.8]below left:{$\hat{v}$}}] 
  (vabc) at (3,3.5) {};
  \node[vertex3,label={[label distance=-5pt,scale=0.8]below left:{$\hat{w}$}}] 
  (vade) at (2,1.9) {};
  \node[vertex3,label={[label distance=-5pt,scale=0.8]above left:{$\hat{u}$}}] 
  (vabd) at (3,1.9) {};
  
  \node (vae) at (0.5,1.9) {};
  \node (vbd) at (4,1.9) {};
  \node (ved) at (2,0.2) {};

  \node (veg) at (0,1) {};
  \node (vag) at (1,3.3) {};
  
  \node (vac) at (2,4) {};
  \node (vcb) at (4,4) {};

  \foreach \from/\to in {ve/vd,ve/va,vd/va,vd/vb,va/vb,va/vc,vb/vc}
  \draw (\from) -- (\to);
  
  \foreach \from/\to in
  {vade/vabd,vabd/vabc}
  \draw[dashed,thick] (\from) -- (\to);
  
  \foreach \from/\to in
  {vabc/vac,vabc/vcb, vade/vae, vade/ved}
  \draw[dotted,thick] (\from) -- (\to);
  
\end{tikzpicture}
\end{center}
        (a)
    \end{minipage}
    \quad
    \begin{minipage}[b]{0.45\linewidth}
        \centering
        \begin{center}
\begin{tikzpicture}
  [scale=.7]
  
    \fill[gray!20] 
    (-0.3,4) node[above,gray] {$\mathcal{R}_{\hat{u}}$} --(-0.3,2)  
    to[out=-30, in=210] 
    (6.3,2) -- (6.3,4) 
    to[out=150, in=30] 
    (-0.3,4);   

  \fill[fill=gray!70] (5,3) ellipse (1 and 0.5);  
  \node[above,gray] at (5,3.5) {$\mathcal{R}_{\hat{w}}$};
  
  \draw (1,2.85) -- (5,2.85) node[above,pos=0.15] {$P_{b'}$};
  \draw[dashed] (1,2.85) -- (0,2.85);
  \draw[dotted] (5,2.85) -- (5.5,2.85);

  \draw (1,2.7) -- (3.5,2.7) node [below,pos=0.5] {$P_{a'}$};
  \draw[dashed] (1,2.7) -- (0,2.7);

  \draw (2.5,3) -- (5,3) node [above,pos=0.4] {$P_{d}$};
  \draw[dotted] (5,3) -- (5.5,3);  

\end{tikzpicture}
\end{center}
        (b)
    \end{minipage}
    \caption{(a) A part of a graph $G$ with $\widehat{G}$ and (b) how 
    \autoref{alg: b1 con out} constructs its $B_{1}$-EPG
        representation. Dotted edges may or may not exist.}
    \label{fig:const_out_tri_in_b1_3}
\end{figure}

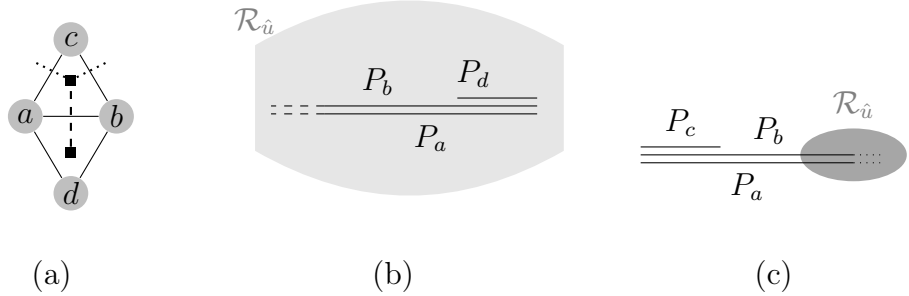
\begin{figure}[tb]
    \centering
    \begin{minipage}[b]{0.25\linewidth}
        \centering
        %\missingfigure{fig out tri in b1 const 0 vertices with deg three graph}
        \begin{center}
\begin{tikzpicture}
  [scale=.6]
  
  %a' - a
  %f - b
  %b-d
  %a-f
  %c'-g
  %b'-h  

  \node[vertex] (vd) at (3,1) {$d$};
  \node[vertex] (va) at (2,2.7) {$a$};
  \node[vertex] (vb) at (4,2.7) {$b$};
  \node[vertex] (vc) at (3,4.4) {$c$};
  
  \node[vertex3,label={[label distance=-5pt,scale=0.8]below left:{$\hat{v}$}}] 
  (vabc) at (3,3.5) {};
  \node[vertex3,label={[label distance=-5pt,scale=0.8]above left:{$\hat{u}$}}] 
  (vabd) at (3,1.9) {};

  \node (vbd) at (4,1.9) {};

  \node (vag) at (1,3.3) {};
  
  \node (vac) at (2,4) {};
  \node (vcb) at (4,4) {};

  \foreach \from/\to in {vd/va,vd/vb,va/vb,va/vc,vb/vc}
  \draw (\from) -- (\to);
  
  \foreach \from/\to in
  {vabd/vabc}
  \draw[dashed,thick] (\from) -- (\to);
  
  \foreach \from/\to in
  {vabc/vac,vabc/vcb}
  \draw[dotted,thick] (\from) -- (\to);
  
\end{tikzpicture}
\end{center}
        (a)
    \end{minipage}
    \quad
    \begin{minipage}[b]{0.325\linewidth}
        \centering
        \begin{center}
\begin{tikzpicture}
  [scale=.7]
  
    \fill[gray!20] 
    (-0.3,4) node[above,gray] {$\mathcal{R}_{\hat{u}}$} --(-0.3,2)  
    to[out=-30, in=210] 
    (5.5,2) -- (5.5,4) 
    to[out=150, in=30] 
    (-0.3,4);

  \draw (1,2.85) -- (5,2.85) node[above,pos=0.25] {$P_{b}$};
  \draw[dashed] (1,2.85) -- (0,2.85);

  \draw (1,2.7) -- (5,2.7) node [below,pos=0.5] {$P_{a}$};
  \draw[dashed] (1,2.7) -- (0,2.7);

  \draw (3.5,3) -- (5,3) node [above,pos=0.2] {$P_{d}$};

\end{tikzpicture}
\end{center}
        (b)
    \end{minipage}
    \quad
    \begin{minipage}[b]{0.325\linewidth}
        \centering
        \begin{center}
\begin{tikzpicture}
  [scale=.7]

  \fill[fill=gray!70] (5,3) ellipse (1 and 0.5);  
  \node[above,gray] at (5,3.5) {$\mathcal{R}_{\hat{u}}$};
  
  \draw (1,2.85) -- (5,2.85) node[below,pos=0.5] {$P_{a}$};
  \draw[dotted] (5,2.85) -- (5.5,2.85);

  \draw (1,3.15) -- (2.5,3.15) node [above,pos=0.5] {$P_{c}$};

  \draw (1,3) -- (5,3) node [above,pos=0.6] {$P_{b}$};
  \draw[dotted] (5,3) -- (5.5,3);  

\end{tikzpicture}
\end{center}
        (c)
    \end{minipage}    
    \caption{(a) A part of a graph $G$ with $\widehat{G}$ and (b) how 
    \autoref{alg: b1 con out} constructs its $B_{1}$-EPG
        representation. Dotted edges may or may not exist. (c) The starting 
        construction of \autoref{alg: b1 con out}. }
    \label{fig:const_out_tri_in_b1_4}
\end{figure}
\smallskip

The next result establishes the correctness of \autoref{alg: b1 con 
out}.
\begin{lemmaMy} \label{lem: b1 max out algo}
	Let $G$ be a maximal outerplanar $M$-free graph with  almost-dual
	$\widehat{G}$  and   reduced graph $\widetilde{G}$.
	Then \autoref{alg: b1 con out} constructs a $B_1$-EPG representation of $G$.
\end{lemmaMy}
\begin{proof}
	We first prove  that all paths can be constructed as described 
	in 
	\autoref{alg: b1 con out}. Towards that end, note that we have the 
	following invariants 
	throughout the algorithm.
	For each $(\hat{u},\hat{v},\mathcal{R}_{\hat{u}}) \in ToConstruct$ 
	the vertices $\hat{u}$ and $\hat{v}$ are adjacent in $\widehat{G}$ and  
	their 
	corresponding triangles $T_{\hat{u}}$ and $T_{\hat{v}}$ share two vertices 
	$a$ and $b$ 
	in $G$. 
	Furthermore the paths of the three vertices of $T_{\hat{v}}$ have already 
	been 
	constructed. Moreover $\mathcal{R}_{\hat{u}}$ indicates a region of the
	$B_1$-EPG representation, 
	where the paths $P_a$ and $P_b$ intersect and where  they can be extended
	such that no other      paths are in this region so far.
	This implies that all the paths can really be constructed as  described by 
	\autoref{alg: b1 con out}.
	
	Next we prove that \autoref{alg: b1 con out} constructs exactly one  path 
	for each vertex of $G$. 
	After the execution of the operations in 
	lines~\ref{alg:start_processing_triple} 
	to~\ref{alg: b1 con out treat one element end} of the while loop for the
	triple $(\hat{u},\hat{v},\mathcal{R}_{\hat{u}})$  all paths corresponding 
	to vertices of 
	$T_{\hat{u}}$ have been constructed and all neighbors of $\hat{u}$ have 
	been added to 
	$ToConstruct$. This implies that  for each vertex of $G$ the path
	representing  it on the grid has been constructed at this time, thus the
	algorithm constructs one path per vertex.   %because first  
	%    $\widehat{G}$ is a tree with maximum degree $3$ (see 
	%   \autoref{obs:widehatGtreeAndDegree3})  and second,   each vertex of $G$ 
	%  belongs
	% to at      least one triangle in $G$ and hence it corresponds to at least 
	%one vertex in $\widehat{G}$.
	%Moreover any time the paths corresponding to the vertices of some triangle 
	%in
	%$\widehat{G}$ are constructed,  it is checked for which vertices there
	%already exist paths on the grid.
	Further, the algorithm constructs new  paths  
	only  for
	those vertices the paths of which have not been constructed yet.
	Thus the algorithm does not construct more than one path per vertex. 
	%Summarizing, for each vertex $v$ in $G$ \autoref{alg: b1 con out}
	%constructs exactly one 
	%path  representing $v$  on the grid.  
	
	Next we  show  that two paths $P_a$ and $P_b$ intersect iff 
	the vertices~$a$ and $b$ are adjacent. 
	When constructing the paths, it is easy to see that if $P_a$ and 
	$P_b$ intersect, then 
	their corresponding vertices are adjacent.
	On the other hand, if $a$ and $b$ are adjacent vertices of $G$, then there
	is a
	% (not     necessarily unique)
	vertex  $\hat{v}$ of $\widehat{G}$ such that $\{a,b\} \subseteq 
	V(T_{\hat{v}})$. 
	\autoref{alg: b1 con out} 
	constructs the 
	paths $P_a$ and $P_b$ as intersecting paths, when 
	$\hat{v}$ or one of its neighbors  containing both $a$ and $b$ is
	considered in     lines~\ref{alg: b1 con out treat one element start} 
	to~\ref{alg: b1 con out treat one element end}.   % Hence $P_a$ and $P_b$ 
	%    intersect iff the vertices $a$ and 
	%  $b$ are adjacent in $G$.
	
	Finally we show  that every path has at most one 
	bend. By construction a path $P_a$ corresponding to a vertex $a \in 
	V(G)$ only bends 
	if $a \in V(T_{\hat{v}})$ and $a$ was assigned to $\hat{v}$ in the 
	assignment given by \autoref{alg: b1 con out assignment}. 
	Due to \autoref{lem: assignemnt well definied and properties} each vertex 
	of $G$ is 
	assigned to at most one vertex of $\widehat{G}$, 
	hence each path $P_a$ bends at most once. 
	%  Therefore the representation of $G$ is in fact a $B_1$-EPG 
	%representation.
\end{proof}

\autoref{lem: b1 max out algo} implies the following 
characterization of  $B_1$-EPG 
maximal outerplanar graphs.
\begin{theoremMy}
    \label{out_tri_in_b1}
    Let $G$ be a maximal outerplanar graph. 
    Then $G$ is  $B_{1}$-EPG if and only if $G$ is $M$-free.
\end{theoremMy}
\begin{proof}%[\bf Proof of \autoref{out_tri_in_b1}]
	If $G$ is not $M$-free, then 
	$G$ is not $B_{1}$-EPG by \autoref{m1_n_non_in_b1}. 
	Otherwise, \autoref{alg: b1 con out} constructs a $B_1$-EPG representation 
	according to \autoref{lem: b1 max out algo}, so $G$ is $B_1$-EPG.
\end{proof}

\autoref{out_tri_in_b1} and 
\autoref{cor:Recognition max out plan B1 quadratic} imply that it can be 
decided 
in $O(n^2)$ time
whether  a given maximal outerplanar graph $G$ of order  $n$ is $B_1$-EPG. 
Furthermore,  \autoref{alg: b1 con out} can be implemented to run  in 
 $O(n)$ time for an input graph $G$ of order $n$. Thus,  a 
$B_1$-EPG representation of a $B_1$-EPG maximal outerplanar graph $G$ of order
$n$ can be
constructed in $O(n^2)$ time.

%We close this section with a statement on the time complexity of constructing a
%$B_1$-EPG representation of a maximal outerplanar graph belonging to  $B_1$.
%%
% 
%\begin{corollary}\label{cor:MaxOutPlanB1}
%    Let $G$ be a maximal  outerplanar graph with $n$ vertices. It can be 
%decided
%    in $O(n^2)$ time whether $G$ is  $B_1$-EPG. In the positive case a 
%    $B_1$-EPG representation of $G$ can be constructed in $O(n^2)$ time.
%\end{corollary}

Recalling  that all outerplanar graphs
are in $B_{2}^{m}$ (see  
\autoref{out_in_b2m}),
we obtain  
the following  full characterization of 
 the maximal outerplanar graphs belonging  to   $B_{0}$, $B_{1}^{m}$, $B_{1}$ 
 and
 $B_{2}^{m}$.% represented in terms of the (monotonic) bend number. 
\begin{corollary}
 The (monotonic) bend number of a maximal outerplanar graph  $G$ is given as follows.
\begin{align*}
 b(G) &=\left \{ \begin{array}{ll}
0 & \mbox{if $G$ is  $S_3$-free}\\
1& \mbox{if $G$ is $M$-free but not  $S_3$-free}\\
2 & \mbox{otherwise}\end{array} \right .\\
b^m(G)&= \left \{ \begin{array}{ll} 0 & \mbox{if $G$ is  
$S_3$-free}\\
2  & \mbox{otherwise}\end{array} \right .
\end{align*}
\end{corollary}

%-------------------------------------------------------------
%%%\clearpage
\section{The  (monotonic) bend number  of cacti}
\label{sec:Cacti}
%-------------------------------------------------------------------
% Cacti
%-------------------------------------------------------------------

\begin{comment}We now want to derive such a full characterization of another subclass of outerplanar graphs, namely for cacti. We first need the following definitions.

\begin{definition}2
A \emph{cycle} on $r$ vertices is denoted by $C_{r}$ and is a graph with vertices $\{v_{1}, \dots ,  v_{r}\}$ and edges $\{\{v_{i},v_{i+1}\}\mid 1 \leqslant i \leqslant r - 1 \} \cup \{\{v_{r},v_{1}\}\}$.
\end{definition}
\begin{definition}
A \emph{simple cycle} of a graph $G=(V,E)$ is a subgraph $C_{r}$ for some $r\leqslant |V|$.
\end{definition}
\end{comment}

\begin{definition}
A graph is called a \textit{cactus} if it is connected and any two simple cycles
in it  have at most one vertex in common.
\end{definition}

It is easy to see that every cactus is outerplanar, so due to 
\autoref{out_in_b2m} cacti are in $B_2^m$. 
Notice that  cacti are in some sense the opposite to maximal
outerplanar graphs: in a cactus  edges which connect non-consecutive
vertices belonging to the same simple cycle are not allowed, whereas in maximal
outerplanar graphs there have to be as many edges as possible connecting such
vertices.

%-------------------------------------------------------------------
% Cacti in B_{0}
%-------------------------------------------------------------------
\subsection{Cacti in \texorpdfstring{$B_{0}$}{B0}}
\label{sec:cacti_B_0}

%Again, we start by figuring out which cacti are in $B_{0}$. 
Consider first some   simple  cacti which are not in $B_{0}$:  cycles
$C_{r}$ for $r \geqslant 4$, 
and the graphs $M_2$ and $M_3$ depicted in  \autoref{fig:s3_m2_m3}. 
Indeed, 
%the graph $C_{r}$ is not \emph{chordal} for  $r \geqslant 4$, as 
%it is 
%a cycle of four or more vertices that does not have a \emph{chord}, i.e.\ an 
%edge that is not part of the cycle but adjacent to two vertices of the cycle.
%Furthermore, the vertices $\{e,f,g\}$ and $\{d,e,f\}$ form an \emph{asteroidal 
%triple} (AT, i.e.\ for each two vertices in the AT there is a path containing 
%these two 
%vertices, but no neighbor of the third vertex) in $M_2$ and $M_3$, 
%respectively. Thus, these graphs are  not interval graphs because they are not 
%chordal or not AT-free due to~\cite{lekkeikerker1962representation}.
as shown in~\cite{lekkeikerker1962representation} (cf.\ Introduction) 
an interval graph
  (a) does not contain an induced cycle of length more than three and (b) in  
  any
  triple of  pairwise distinct and pairwise non-adjacent vertices, there 
  exists
  at least one vertex which is  a
  neighbor to any path connecting the two other vertices.  
It can be easily seen that none of the graphs mentioned above fulfills both (a)
and (b), hence none of these graphs is  an  interval graph.
Therefore, they are not contained  in~$B_{0}$ (and equivalently in
$B_{0}^m$), because   a graph $G$ is in~$B_{0}$ (and equivalently in~$B_{0}^m$)
iff~$G$ is an interval graph,  as mentioned in 
\autoref{intro:sec}.

\begin{comment}
\begin{observation}
\label{cn_nin_b0}
The cycle $C_{r}$ is not in $B_{0}$ for $r \geqslant 4$.
\end{observation}

\begin{proof}
Let $v_{1}$, $v_{2}$, \dots, $ v_{r}$ be the consecutive vertices of $C_{r}$ with $r \geqslant 4$. Assume $C_{r}$ has a $B_{0}$-EPG representation and let $P_{i}$ be the path corresponding to $v_{i}$. Both $P_{1}$ and $P_{3}$ have to share a grid edge with $P_{2}$ since $v_{1}$ and $v_{3}$ are adjacent to $v_{2}$. Nevertheless $P_{1}$ and $P_{3}$ do not share a grid edge with each other because $v_{1}$ and $v_{3}$ are not adjacent. So $P_{1}$ is completely on one side of $P_{3}$ with $P_{2}$ using all of the grid edges between them.

\input{Bilder/Cn_not_in_B0}

Furthermore $v_{1}$ and $v_{3}$ are not only connected via $v_{2}$, but also
using $v_{4}$, \dots, $ v_{r}$, so at least one of the paths $P_{4}$, \dots, $
P_{n}$ has to have a common grid edge with $P_{2}$, which is 
 a contradiction since none of the corresponding vertices is adjacent to $v_2$. 
\end{proof}
\end{comment}

\begin{figure}[tbp]
  \centering
   \begin{minipage}[b]{0.30\linewidth}
        \centering
        \begin{center}
\begin{tikzpicture}
  [scale=.5]
  
  \node[vertex] (a) at (1,1) {$x_{3}$};
  \node[vertex] (b) at (3,1) {$x_{1}$};
  \node[vertex] (c) at (2,2.7) {$x_{2}$};
  \node[vertex] (ap) at (4,2.7) {$y_{1}$};
  \node[vertex] (bp) at (0,2.7) {$y_{2}$};
  \node[vertex] (cp) at (2,-0.7) {$y_{3}$};
  
  \foreach \from/\to in {a/b,a/c,a/bp,a/cp,b/c,b/ap,b/cp,c/ap,c/bp}
  \draw (\from) -- (\to);
  
\end{tikzpicture}
\end{center}
        (a)
    \end{minipage}
    \begin{minipage}[b]{0.30\linewidth}
        \centering
        \begin{center}
\begin{tikzpicture}
  [scale=.5]
  
  \node[vertex] (a) at (1,1) {$a$};
  \node[vertex] (b) at (-1,3) {$b$};
  \node[vertex] (c) at (1,3) {$c$};
  \node[vertex] (d) at (3,3) {$d$};
  \node[vertex] (e) at (-1,5) {$e$};
  \node[vertex] (f) at (1,5) {$f$};
  \node[vertex] (g) at (3,5) {$g$};

  \foreach \from/\to in {a/b,a/c,a/d,b/e,c/f,d/g}
  \draw (\from) -- (\to);
  
\end{tikzpicture}
\end{center}
        (b)
    \end{minipage}
    \quad
    \begin{minipage}[b]{0.30\linewidth}
        \centering
        \begin{center}
\begin{tikzpicture}
  [scale=.5]
  
  \node[vertex] (a) at (1,1) {$a$};
  \node[vertex] (b) at (3,1) {$b$};
  \node[vertex] (c) at (2,3) {$c$};
  \node[vertex] (d) at (-1,1) {$d$};
  \node[vertex] (e) at (5,1) {$e$};
  \node[vertex] (f) at (2,5) {$f$};

  \foreach \from/\to in {a/b,a/c,b/c,a/d,b/e,c/f}
  \draw (\from) -- (\to);
  
\end{tikzpicture}
\end{center}
        (c)
    \end{minipage}
    \caption{(a) The graph $S_{3}$. (b) The graph $M_{2}$. (c) The graph $M_{3}$.}
    \label{fig:s3_m2_m3}
\end{figure}

\begin{definition}\label{MC-free-ouerplanar}
    A cactus is called \emph{$MC$-free} if it 
    contains neither  $M_{2}$, nor $M_{3}$, nor $C_{r}$, for any $r \geqslant 4$, as
    an induced subgraph. 
\end{definition}

Since $C_r$, for any $r\geqslant 4$, $M_2$ and $M_3$ are not in $B_0$,   a 
cactus 
that is not $MC$-free 
is not
in $B_0$. The next theorem 
 shows that  the converse is also true. Its constructive proof
 is based on  \autoref{alg: b0 cactus}.
\begin{algorithm}[tbp]
	\footnotesize
	\caption{Construct a $B_0$-EPG representation of an $MC$-free cactus}
	\label{alg: b0 cactus}
	\Input{An $MC$-free  cactus   $G = (V(G),E(G))$ and
		the set   $\delta_G(v)$ of  edges incident to  $v$ in $G$,  $\forall 
		v\in V(G)$}
	\Output{A $B_0$-EPG representation of $G$}
	\begin{algorithmic}[1]
		\Procedure{B0\_MCfree\_cactus}{$G$}
		\State{Set $\bar{\Delta}:=\seq{}$ and $\Delta':=\seq{}$, where $\seq{}$ 
		denotes an empty list}
		\State{Set $\bar{G}:=G$}
		\While{$G$ contains a triangle with vertex set  
		$\{v_1,v_2,v_3\}$}\label{BeginWhile:barG}
		\State{Determine a vertex $v$ of degree two in $\{v_1$,$v_2$,$v_3\}$}
		\State{Set $V(\bar{G}):=V(\bar{G})\setminus \{v\}$ and $E(\bar{G}):=
			E(\bar{G})\setminus \{\delta_G(v)\}$}
		\State{Set $\bar{\Delta}:=\seq{\bar{\Delta},v}$}
		\EndWhile\label{EndWhile:barG}
		\For{$v\in V(\bar{G})$} \label{BeginConstr:EPGbarG}
		\State{If $v$ is a leaf in $\bar{G}$, set $\Delta':=\seq{\Delta',v}$}
		\EndFor
		\longState{Set $V(G'):=V(\bar{G})\setminus \Delta'$, 
		$E(G'):=E(\bar{G})\setminus
			(\cup_{v\in \Delta'} \delta_{\bar{G}}(v))$, where 
			$\delta_{\bar{G}}(v)$ is the set of
			edges incident with $v$ in  $\bar{G}$}
		\State{Set $G' = (V(G'),E(G'))$}\label{EndConstr:G'}
		\State{Set $k:=|V(G')|$}\label{BeginConstr:EPGG'}
		\For{$i = 1, \dots,k$}
		\longState{Construct the path $P_{w_i}$ corresponding to the $i$-vertex 
		$w_i$ in
			$G'$ as shown in \autoref{fig:path_in_b0}}
		\EndFor\label{EndConstr:EPGG'}
		\For{$v\in \Delta'$}\label{BeginExtend:EPGG'}
		\State{Let  $w\in V(\bar{G})$ such that  
			$\{(w,v)\}=\delta_{\bar{G}}(v)$} 
		\longState{Construct a path
			$P_v$  representing $v$ as a subpath of
			$P_w$  which is not contained  in  any other
			path constructed already}
		\EndFor\label{EndExtend:EPGbarG}
		\For{$v\in \bar{\Delta}$}\label{BeginExtend:EPGG}
		\longState{Construct a path $P_v$ representing $v$ as a common edge of 
		the (already constructed) paths
			$P_{v_1}$, $P_{v_2}$ for $\{(v,v_1),(v,v_2)\}=\delta_G(v)$}
		\EndFor\label{EndExtend:EPGG}
		\EndProcedure
	\end{algorithmic}
\end{algorithm}
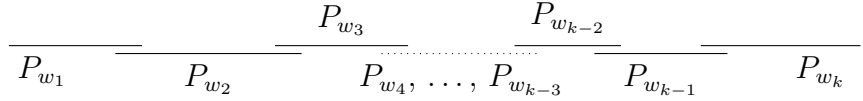
\begin{figure}[tbp]
	\begin{center}
\begin{tikzpicture}
  [scale=.7]

  \draw (1,1) -- (3.5,1) node[below,pos=0.25] {$P_{w_{1}}$};   
  \draw (3,0.85) -- (6.5,0.85) node[below,pos=0.5] {$P_{w_{2}}$};
  \draw (6,1) -- (8.5,1) node[above,pos=0.5] {$P_{w_{3}}$};
  \draw[dotted] (8,0.85) -- (11,0.85) node[below,pos=0.5] {$P_{w_{4}}$,  \dots, $P_{w_{k-3}}$};
  
  \draw (10.5,1) -- (12.5,1) node[above,pos=0.5] {$P_{w_{k-2}}$};   
  \draw (12,0.85) -- (14.5,0.85) node[below,pos=0.5] {$P_{w_{k-1}}$};
  \draw (14,1) -- (17,1) node[below,pos=0.75] {$P_{w_{k}}$};
  
\end{tikzpicture}
\end{center}
	\caption{The $B_{0}$-EPG representation of the path $G'$ of \autoref{alg: 
	b0 cactus}.}
	\label{fig:path_in_b0}
\end{figure}
\begin{theoremMy}\label{B_0_cactus:theo}
 A cactus $G$ is in $B_{0}$ if and only if $G$ 
is $MC$-free.
\end{theoremMy}
\begin{proof}%[\bf Proof of \autoref{B_0_cactus:theo}]
	As mentioned in \autoref{sec:cacti_B_0} a graph which is not $MC$-free is 
	not in
	$B_0$. The sufficient condition of \autoref{B_0_cactus:theo} is proven by 
	construction, more precisely,
	by showing that \autoref{alg: b0 cactus} correctly constructs a $B_0$-EPG
	representation of an $MC$-free cactus $G$.
	
	Notice that if  $G$ contains any cycles, then those are  triangles.
	Furthermore notice that if the vertex set $\{v_{1},v_{2},v_{3}\}$ induces a
	triangle in $G$, then  there is no  vertex $v_{4} \in V(G)\setminus
	\{v_1,v_2,v_3\}$ adjacent to at least two (assume w.l.o.g.\ $v_{1}$ and
	$v_{2}$) vertices of $\{v_{1},v_{2},v_{3}\}$, because otherwise  the simple 
	cycles $\{ v_{1},v_{2},v_{3}\}$
	and $\{v_{1},v_{2},v_{4}\}$ would share an edge and contradict the property 
	of $G$ being a  cactus. 
	Hence if the vertex set $\{v_{1},v_{2},v_{3}\}$ induces a triangle in $G$, 
	then
	at least one vertex in
	$\{v_{1}, v_{2}, v_{3}\}$ has degree $2$, otherwise $G$ would contain 
	$M_{3}$ as an induced subgraph. 
	Assume w.l.o.g.\ that $v_{3}$ has degree $2$.
	
	If  the graph $G\setminus v_3$ obtained  from $G$ by deleting the vertex
	$v_{3}$ and its incident  edges 
	$\{v_{1},v_{3}\}$,  $\{v_{2},v_{3}\}$ had a $B_{0}$-EPG representation, 
	the latter could be extended to a 
	$B_{0}$-EPG representation of $G$ by  simply representing $v_3$ by just one 
	grid   edge in the
	$B_0$-EPG representation of $G\setminus v_3$ shared by the paths $P_{v_1}$ 
	and $P_{v_2}$ corresponding to $v_1$ and
	$v_2$, respectively. 
	Obviously, this argument holds for any triangle in $G$.  
	Following this idea,   \autoref{alg: b0 cactus} first constructs the 
	graph   
	$\bar{G}$ obtained
	by removing from $G$  one  vertex  of 
	degree~$2$ together with its incident edges from every triangle in $G$. 
	This is 
	done in  lines~\ref{BeginWhile:barG}-\ref{EndWhile:barG}.  
	In lines~\ref{BeginConstr:EPGbarG}-\ref{EndExtend:EPGbarG} the algorithm
	constructs
	a $B_0$-EPG representation of~$\bar{G}$, which is then extended to  a 
	$B_0$-EPG
	representation of $G$ in lines~\ref{BeginExtend:EPGG}-\ref{EndExtend:EPGG} 
	as  explained above.

	Next consider the construction of a $B_0$-EPG representation of $\bar{G}$.
	Observe first that $\bar{G}$  is a cactus which  does not
	contain an induced $M_2$ or $C_{r}$, for any $ r 
	\geqslant 3$.
	%    (because $\bar{G}$ 
	%    is an induced subgraph of the $MC$-free cactus $G$).
	Thus $\bar{G}$ is  a tree. More precisely $\bar{G}$ is a so-called
	caterpillar, i.e.\ the   graph $G'$ obtained from  $\bar{G}$ by 
	deleting all its
	leaves and the edges incident to them is a path. 
	Indeed, there is  no  vertex $v$  with degree more
	than $2$ in $G'$, because if  
	a  vertex  $v$ with    $3$  neighbors  $v_{1}$, $ v_{2}$,
	$v_{3}$ would exists  in
	$G'$, then  each of  $v_{1}$, $ v_{2}$,
	$v_{3}$ would have  degree at least $2$ in $\bar{G}$
	% (otherwise they would  not  be present in $G'$)
	and  $\bar{G}$  would contain  $M_{2}$ as an induced
	subgraph.
	%,  a    contradiction to  the properties of $\bar{G}$. 

	Let   $G'$ be the   path $(w_1,w_2,\ldots,w_k)$ for some $k \leqslant 
	|V(G)|$ and  $w_i\in V(G)$,
	$1\leqslant i\leqslant k$. It is straightforward to construct a $B_0$-EPG 
	representation of 
	$(w_1,w_2,\ldots,w_k)$ as  depicted in
	\autoref{fig:path_in_b0}. Moreover this $B_0$-EPG representation of $G'$ 
	can be
	easily extended to a $B_{0}$-EPG representation of $\bar{G}$ in the
	following way. 
	For any   leaf $v$ of $\bar{G}$ consider  its neighbor  $w_{\ell}$ in
	$\bar{G}$.  Since $w_{\ell}$  belongs to  $G'$,  there is a path
	$P_{w_{\ell}}$  representing $w_{\ell}$ in the $B_{0}$-EPG 
	representation of $G'$.
	Construct a subpath $P_{v}$ of $P_{w_{\ell}}$ to represent $v$,
	such that $P_v$   does not intersect any other path constructed so far. 
	Obviously this constriction yields a $B_{0}$-EPG representation of 
	$\bar{G}$.
	Following these ideas the algorithm first  constructs the path $G'$ in lines
	\ref{BeginConstr:EPGbarG}-\ref{EndConstr:G'}, then it constructs the 
	$B_0$-EPG representation of $G'$ in
	lines~\ref{BeginConstr:EPGG'}-\ref{EndConstr:EPGG'} and finally it extends 
	the
	latter  to a $B_0$-EPG
	representation of $\bar{G}$ in 
	lines~\ref{BeginExtend:EPGG'}-\ref{EndExtend:EPGbarG}.
\end{proof}

A standard analysis of the time complexity of \autoref{alg: b0 
cactus} reveals that
a $B_0$-EPG
representation  a cactus of order $n$ belonging to  $B_0$ can be constructed  
in $O(n\log(n))$ time.
The existence of a faster construction of a  $B_0$-EPG representation of such a
cactus remains an open problem. 
%

%-------------------------------------------------------------------
% Cacti in B_{1}
%-------------------------------------------------------------------
%\newpage 
\subsection{Cacti in \texorpdfstring{$B_{1}$}{B1}}
In this section we show that every cactus belongs to $B_1$  by demonstrating 
that 
\autoref{alg: b1m cactus} constructs a $B_1^m$-EPG representation of an
arbitrary cactus.
\begin{algorithm}[tbp]
	\footnotesize
	\caption{Construct a $B_1^m$-EPG representation of a cactus  $G$}
	\label{alg: b1m cactus}
	\Input{A cactus $G = (V(G),E(G))$ with $n =|V(G)|$  and
		the set   $\delta(v)$ of  edges incident with  $v$ in $G$,  $\forall 
		v\in V(G)$}
	\Output{A $B_1^m$-EPG representation of $G$}
	\begin{algorithmic}[1]
		\Procedure{B1m\_cactus}{$G$}
		\State{Set $col(v):=gray$ for all $v\in V(G)$}
		\State{Select an arbitrary vertex $v_0\in V(G)$}
		\State Set $col(v_0):=green$
		\longState{Draw  the path
			$P_{v_0}$ representing $v_0$ as a straight horizontal line on the 
			grid }
		\longState{Set the free part $\mathcal{R}_{v_0}$ of $P_{v_0}$ to be the 
		whole grid}
		\While{There is a green vertex  in $V(G)$}\label{BeginWhile}
		\State{Select  a vertex $v\in V(G)$ with $col(v)=green$}\label{select}
		\longState{Determine all $k_v$ cycles $C_1(v)$, \ldots, $C_{k_v}(v)$ in
			$G$,
			which contain $v$ and only gray  vertices except for $v$ ($k_v\in 
			\nz\cup \{0\}$)}\label{begin:explore}
		\longState{Determine $C_0(v):=\{u\in V(G)\colon \{u,v\} \in \delta(v), 
		col(u)=gray\}
			\setminus
			\cup_{i=1}^{k_v}V(C_i(v))$ 
			%of all gray neighbors of $v$ which are not contained
			%in some  cycle $C_i(v)$, $1\leqslant i \leqslant k_v$
		}
		\For{$u\in C_0(v)$} 
		\longState{Construct the  path $P_u$ representing $u$ such that it lies 
		in the free
			part ${\cal R}_v$ of $P_v$ and only intersects $P_v$, but no other 
			already
			constructed path,  as shown in \autoref{fig:cacti_in_b1m}(a)}
		\State{Set $col(u):=green$}\label{color:green}
		\EndFor
		\For{$i=1,\ldots,k_v$}
		\longState{Let $\seq{v,u_1,u_2,\ldots,u_{\ell-1},v}$ be the vertices of 
		$C_i(v)$ in the order they are visited in the cycle $C_i(v)$}
		\For{$j=1,\ldots, \ell-1$}
		\longState{Construct the grid  path $P_{u_j}$ representing  $u_j$ such 
		that}
		\State {(1) $P_{u_j}$   lies on the free
			part ${\cal R}_v$ of $P_v$}
		\State{(2) $P_{u_j}$  intersects $P_{u_{j-1}}$}
		\State{(3)$P_{u_j}$  intersects $P_v$ if $j=\ell-1$}
		\State{(4) $P_{u_j}$ does not intersect   other already
			constructed paths}
		\longState{ as shown in \autoref{fig:cacti_in_b1m}(b), 
		\autoref{fig:cacti_in_b1m}(c) and
			\autoref{fig:cacti_in_b1m}(d) for $\ell=3$, $\ell=4$ and  
			$\ell\geqslant 5$, respectively}
		\State{Set $col(u_j):=green$}
		\EndFor
		\EndFor
		\State{Set $col(v):=red$}\label{end:explore}
		\EndWhile\label{EndWhile}
		\EndProcedure
	\end{algorithmic}
\end{algorithm}

\begin{figure}[tbp]
	\centering
	\begin{minipage}[b]{0.39\linewidth}
		\centering
		\begin{center}
    \begin{small}
\begin{tikzpicture}
  [scale=.58]

    \fill[gray!20] (-3.5,3.5)--(-3.5,0)  
        to[out=-30, in=210] 
        (3,0) -- (3,3.5) 
        to[out=150, in=30] 
        (-3.5,3.5); 
        \node[above,gray] at (-3.5,3.5) {$\mathcal{R}_{v}$};
        
   \draw[dashed] (-1.5,3.5)--(-1.5,0)  
   to[out=-30, in=210] 
   (1,0) -- (1,3.5) 
   to[out=150, in=30] 
   (-1.5,3.5); 
   \node[below] at (-0.25,  -0.6) {$C_{0}(v)$};
   \draw[dashed]   (-2.5,   0.85) ellipse (.5 and 0.8);
   \node[below] at (-2.7,   0   ) {$C_{0}(v)$};   
   \draw[dashed]   (2, 0.85) ellipse (.5 and 0.8);
   \node[below] at (2.2, 0   ) {$C_{0}(v)$};

  \draw (-1.5,0.85) -- (1,0.85) node[below,pos=0.7] {$P_{v}$};
  \draw[dashed] (-4,0.85) -- (-1.5,0.85);
  \draw[dashed] (1,0.85) -- (3.5,0.85);
  
  \fill[fill=gray!70] (0,2) ellipse (.8 and 0.5);
  \node[above,gray] at (1.1,2.2) {$\mathcal{R}_{u}$};   
  \draw (-1,1) -- (0,1) -- (0,3) node[left,pos=1] {$P_{u}$};

\end{tikzpicture}
\end{small}
\end{center}
		(a)
	\end{minipage}
	\quad
	\begin{minipage}[b]{0.56\linewidth}
		\centering
		\begin{center}
    \begin{small}
\begin{tikzpicture}
  [scale=.58]

    \fill[gray!20] (-0.5,4.5)--(-0.5,0)  
        to[out=-30, in=210] 
        (8,0) -- (8,4.5) 
        to[out=150, in=30] 
        (-0.5,4.5); 
        \node[above,gray] at (-0.5,4.5) {$\mathcal{R}_{v}$};

   \draw[dashed] (1.5,4.5)--(1.5,0)  
   to[out=-30, in=210] 
   (6,0) -- (6,4.5) 
   to[out=150, in=30] 
   (1.5,4.5);
   \node[below] at (3.75,  -0.8) {$C_{i}(v)$};
   \draw[dashed]   (0.5,   0.85) ellipse (.5 and 0.8);
   \node[below] at (0.3,   0   ) {$C_{i-1}(v)$};   
   \draw[dashed]   (7, 0.85) ellipse (.5 and 0.8);
   \node[below] at (7.2, 0   ) {$C_{i+1}(v)$};        
  
  \draw (1.5,0.85) -- (6,0.85) node[below,pos=0.7] {$P_{v}$}; 
  \draw[dashed] (-1,0.85) -- (1.5,0.85);
  \draw[dashed] (6,0.85) -- (8.5,0.85);
  
  \fill[fill=gray!70] (3,3) ellipse (.8 and 0.5);   
  \fill[fill=gray!70] (5,2) ellipse (.8 and 0.5);
  \node[above,gray] at (3.8,3.5) {$\mathcal{R}_{u_1}$}; 
  \node[above,gray] at (6.2,2.3) {$\mathcal{R}_{u_2}$}; 
  \draw (2,1) -- (3,1) -- (3,4) node[left,pos=0.3] {$P_{u_{1}}$};
  \draw (2.5,1.15) -- (5,1.15) -- (5,3) node[above,pos=1] {$P_{u_{2}}$};

\end{tikzpicture}
\end{small}
\end{center}
		(b)
	\end{minipage}
	\centering
	
	\begin{minipage}[b]{0.36\linewidth}
		\centering
		\begin{center}
\begin{small}
\begin{tikzpicture}
  [scale=.51]

    \fill[gray!20] (4.5,5)--(4.5,-2)  
        to[out=-30, in=210] 
        (12.5,-2) -- (12.5,5) 
        to[out=150, in=30] 
        (4.5,5); 
        \node[above,gray] at (4.5,5) {$\mathcal{R}_{v}$};
        
   \draw[dashed] (6.5,5)--(6.5,-2.1)  
   to[out=-30, in=210] 
   (10.5,-2.1) -- (10.5,5) 
   to[out=150, in=30] 
   (6.5,5); 
   \node[below] at (8.5,  -2.6) {$C_{i}(v)$};
   \draw[dashed]   (5.5,   0.85) ellipse (.5 and 0.8);
   \node[below] at (5.2,   0   ) {$C_{i-1}(v)$};   
   \draw[dashed]   (11.5, 0.85) ellipse (.5 and 0.8);
   \node[below] at (11.9, 0   ) {$C_{i+1}(v)$};

  \draw (6.5,0.85) -- (10.5,0.85) node[below,pos=0.85] {$P_{v}$};
  \draw[dashed] (4,0.85) -- (6.5,0.85);
  \draw[dashed] (10.5,0.85) -- (13,0.85);
  
  \fill[fill=gray!70] (7.85,4) ellipse (.8 and 0.5);   
  \fill[fill=gray!70] (9,2.5) ellipse (.5 and 0.8); 
  \fill[fill=gray!70] (8.15,-1) ellipse (.8 and 0.5);     
  \node[above,gray] at (8.7,4.2) {$\mathcal{R}_{u_1}$}; 
  \node[below,gray] at (9.7,1.9) {$\mathcal{R}_{u_2}$}; 
  \node[below,gray] at (8.95,-1.5) {$\mathcal{R}_{u_3}$}; 
  \draw (7,1) -- (7.85,1) -- (7.85,5) node[left,pos=0.3] {$P_{u_{1}}$};
  \draw (8,0) -- (8,2.5) -- (10,2.5) node[above,pos=0.9] {$P_{u_{2}}$};
  \draw (8.15,-2) -- (8.15,0.7) -- (9,0.7) node[below,pos=0.6] {$P_{u_{3}}$};

\end{tikzpicture}
\end{small}
\end{center}
		(c)
	\end{minipage}
	\quad
	\begin{minipage}[b]{0.60\linewidth}
		\centering
		\begin{center}
    \begin{small}
\begin{tikzpicture}
  [scale=.51]

    \fill[gray!20] (9,6)--(9,0)  
        to[out=-20, in=200] 
        (23.5,0) -- (23.5,6) 
        to[out=160, in=20] 
        (9,6); 
        \node[above,gray] at (9,6) {$\mathcal{R}_{v}$};
        
   \draw[dashed] (11,6)--(11,0.5)  
      to[out=-20, in=200] 
      (21.5,0.5) -- (21.5,6) 
      to[out=160, in=20] 
      (11,6); 
   \node[below] at (16,  -0.5) {$C_{i}(v)$};
   \draw[dashed]   (10,   0.85) ellipse (.5 and 0.8);
   \node[below] at (10,   0   ) {$C_{i-1}(v)$};   
   \draw[dashed]   (22.5, 0.85) ellipse (.5 and 0.8);
   \node[below] at (22.5, 0   ) {$C_{i+1}(v)$};

  \draw (11,0.85) -- (21.5,0.85) node[below,pos=0.5] {$P_{v}$};
  \draw[dashed] (8.5,0.85) -- (11,0.85);
  \draw[dashed] (21.5,0.85) -- (24,0.85);
      
  \fill[fill=gray!70] (12.5,4.5) ellipse (.8 and 0.5);   
  \fill[fill=gray!70] (13.5,3) ellipse (.5 and 0.8); 
  \fill[fill=gray!70] (16,3.15) ellipse (.5 and 0.8);
  \fill[fill=gray!70] (19.5,3.45) ellipse (.5 and 0.8);   
  \fill[fill=gray!70] (20.65,5) ellipse (.8 and 0.5);   
  \node[above,gray] at (13.2,4.7) {$\mathcal{R}_{u_1}$}; 
  \node[below,gray] at (13.5,2.4) {$\mathcal{R}_{u_2}$}; 
  \node[below,gray] at (16,2.55) {$\mathcal{R}_{u_3}$};   
  \node[below,gray] at (19.5,2.95) {$\mathcal{R}_{u_{\ell-2}}$};
  \node[above,gray] at (19.75,5.2) {$\mathcal{R}_{u_{\ell-1}}$};   
  \draw (11.5,1) -- (12.5,1) -- (12.5,5.5) node[left,pos=0.5] {$P_{u_{1}}$};
  \draw (12.65,2) -- (12.65,3) -- (15,3) node[below,pos=0.8] {$P_{u_{2}}$};
  \draw (14.5,3.15) -- (17.5,3.15) node[above,pos=0.18] {$P_{u_{3}}$};
  \draw[dotted] (17,3.3) -- (18.2,3.3);
  \draw (18.5,3.45)  -- node[above,pos=-0.1] {$P_{u_{\ell-2}}$}(20.5,3.45) -- (20.5,4);
  \draw (19.5,1) -- (20.65,1) -- (20.65,6)node[left,pos=0.1] {$P_{u_{\ell-1}}$};

\end{tikzpicture}
\end{small}
\end{center}
		(d)
	\end{minipage}
	\caption{The  construction of a $B_{1}^{m}$-EPG representation of a cactus 
	with free parts of the paths. These constructions are used if ${\cal R}_v$ 
	lies on a horizontal line of $P_v$. If ${\cal R}_v$ lies on a vertical line 
	of $P_v$, then the constructions have to be rotated by $90^o$ clockwise and 
	then  flipped  vertically.}
	\label{fig:cacti_in_b1m}
\end{figure}
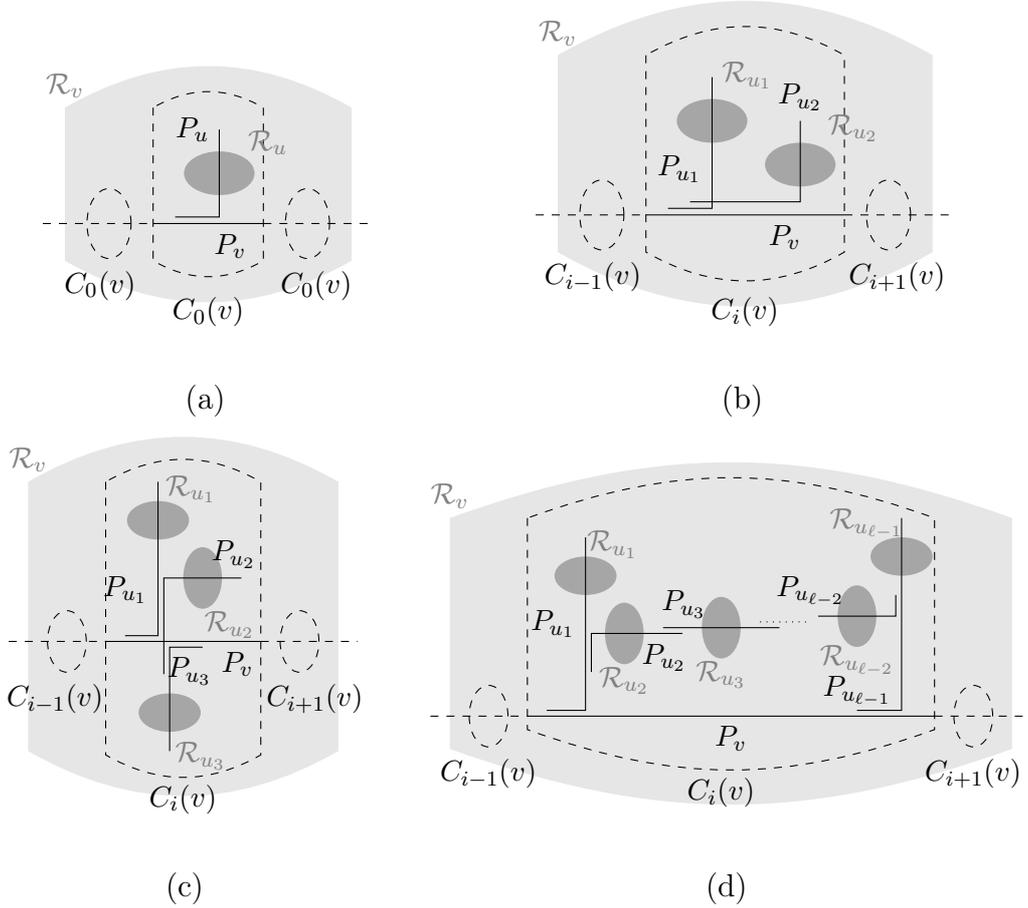
\begin{theoremMy} \label{B1m:cactus}
Every cactus is in $B_{1}^{m}$.
\end{theoremMy}
\begin{proof}%[\bf Proof of \autoref{B1m:cactus}]
	We show that  \autoref{alg: b1m cactus}  correctly
	constructs a $B_1^m$-EPG representation for an arbitrary input cactus $G$.
	Note that 
	during the algorithm each vertex has a color $col(v)$ and that the colors
	of the vertices change during the algorithm. At the beginning all vertices
	are colored gray.  A vertex $v$ is gray 
	iff the corresponding grid path 
	$P_v$ has not been constructed yet. A vertex~$v$ turns  green as soon as 
	$P_v$  %in the $B_1^m$-EPG representation  
	has been constructed. Finally,  $v$ is colored red as 
	soon as the paths       corresponding to all   neighbors of $v$ have been 
	constructed in the $B_1^m$-EPG
	representation.
	
	Essentially  the algorithm consists of a while loop which is repeated as
	long as there exist green vertices.  After selecting a green vertex $v$
	(line~\ref{select}),  the     algorithm \emph{explores} $v$ 
	in the  lines
	\ref{begin:explore}-\ref{end:explore}. Then,  after the
	exploration, $v$ turns red in line \ref{end:explore}.
	The
	algorithm terminates when all green vertices have turned red, hence after
	repeating the while loop $|V|$ times.   
	
	Notice that whenever a path $P_v$ is constructed, also a free region 
	$\mathcal{R}_v$ is indicated, in which a part of one of the two segments of 
	$P_v$ is contained and in which no other paths have been constructed yet.
	\smallskip
	
	Observe further that the following properties IA1, IA2 and IA3
	always hold when the exploration of some 
	vertex is completed, i.e.\ at line \ref{end:explore}.  We refer to these 
	three
	properties as the \emph{invariants of the algorithm}.
	\begin{itemize}
		\setlength{\itemsep}{0pt}
		\item[(IA1)] For any green or red  vertex $u$ there is an $\ell \in 
		\nz\cup\{0\}$
		and a sequence $S_u=\seq{v_0,v_1,\ldots, v_\ell}$ of red vertices
		such that $u$ has turned green during the exploration of $v_\ell$, and 
		if 
		$\ell\geqslant 1$, 
		$v_j$ has
		turned green during the exploration of $v_{j-1}$ for all $j\in \{1, 
		\ldots, 
		\ell\}$. 
		\item[(IA2)] Moreover, if $\ell\geqslant 1$, then any two vertices 
		$v_j$, $v_{j+1}$, 
		$0\leqslant
		j\leqslant \ell-1$,  in~$S_u$ are
		connected by a path of non-gray vertices all of which have been gray 
		when 
		the
		exploration of $v_{j}$ started.
		\item[(IA3)] In particular, for any green or
		red  vertex $u$ there is a path $Q_{v_0,u}$  consisting of non-gray 
		vertices 
		connecting $v_0$ and
		$u$ in $G$. 
	\end{itemize}
	\smallskip

	The proof of the theorem  is completed by proving  the next four 
	claims.  
	\begin{itemize}
		\setlength{\itemsep}{0pt}
		\item[(i)] 
		Consider a  green vertex $v$ currently selected  in
		line~\ref{select} of \autoref{alg: b1m cactus} and a cycle $C$ which 
		contains $v$. 
		Let $V(C)$ denote the set of vertices in~$C$. Then either all or none 
		of the vertices
		in $V(C)\setminus\{v\}$ are gray. 
		\item[(ii)] For every vertex $u$ of $G$ the algorithm constructs 
		exactly one path on the grid
		representing $u$. 
		\item[(iii)] All  constructed paths are monotonic and have at most one 
		bend.
		\item[(iv)] Two   paths $P_u$ and
		$P_w$ on the grid intersect iff the corresponding
		vertices $u$ and $w$ are adjacent in $G$.
	\end{itemize}
	
	\medskip
	Proof of  (i). This fact is trivially true if  $v=v_0$ (i.e.\ $v$ is the 
	first vertex
	explored at all). 
	Assume now that $v\neq v_0$ and  that there is a cycle $C$ in $G$  
	containing $v$ as well as  some
	gray vertex $u$ and some non-gray vertex $w\neq v$. Then $C$ does not 
	contain $v_0$
	(otherwise all vertices in  $C$ would have been colored green during the
	exploration of $v_0$).
	IA3  implies that there is  a path~$Q_{v_0,v}$  connecting
	$v_0$ and $v$ and consisting
	of non-gray vertices, and that  there is also a path $Q_{v_0,w}$ connecting 
	$v_0$ and $w$
	and consisting of non-gray vertices. But then $Q_{v_0,v}$, $Q_{v_0,w}$ and 
	a 
	path $Q_{v,w}$  connecting $v$ and $w$ along $C$ close a cycle $C'$. 
	Moreover  $C$ and
	$C'$  intersect along path  $Q_{v,w}$ which contains at least two vertices, 
	and
	this is a contradiction to $G$ being a cactus.

	\medskip
	Proof of (ii).  The construction of at least one path $P_u$ representing
	$u$, for every vertex $u$ of $G$, follows from the connectivity of
	$G$. Indeed consider  a path
	$Q_{v_0,u}=(v_0,v_1,\ldots, v_\ell=u)$  connecting $v_0$ and $u$ in~$G$. 
	The vertex~$v_1$ is colored green during the exploration of $v_0$. Since 
	the algorithm 
	does not terminate  before the exploration of all  green vertices, at some 
	point~$v_1$ will be explored and then $v_2$ will turn green at the latest.
	By repeating this  argument along the vertices of 
	$Q_{v_0,u}$ we conclude that 
	$u$ will turn  green at some point, meaning  that a grid path $P_u$ 
	representing
	$u$ is constructed.
	
	A path corresponding to a vertex is constructed only if a vertex turns
	green. Furthermore, each vertex turns green
	only once, because during the exploration of vertex $v$ the sets 
	$V(C_{i}(v))$,
	$0 \leqslant i \leqslant k_v$ have pairwise empty intersection since $G$ is 
	a
	cactus.
	Hence during the exploration of a vertex (only) gray vertices  turn green 
	at most
	once.
	Hence for each vertex exactly one  corresponding path is constructed.
	
	\medskip
	Proof of  (iii). This claim follows directly from the construction: in 
	\autoref{fig:cacti_in_b1m} all depicted paths are
	monotonic and have at most one bend. Moreover a $90^o$ clockwise rotation 
	of the paths depicted in 
	\autoref{fig:cacti_in_b1m} followed by a vertical flipping yields monotonic 
	paths with at most one bend.

	\medskip
	Proof of (iv). We first consider an edge $\{u,w\}$ in $G$ and show that 
	$P_u$ intersects $P_w$.
	Assume w.l.o.g.\ that $w$ was explored before $u$ by the algorithm. 
	%Let 
	%$P_u$ and
	%$P_w$ be the paths constructed by the algorithm to represent 
	%$u$ and $w$,
	%respectively. 
	
	If  $u$ was gray at the moment when $w$ is explored, then 
	$u$   belongs to
	some $C_i(w)$, $0\leqslant i\leqslant k_w$, and by construction  $P_u$   
	intersects
	$P_w$.
	
	Assume now that $u$ is not gray when $w$ is explored. 
	Let $v_u$ ($v_w$) be the vertex during the exploration of which $u$ ($w$)
	turned green.
	If $v_u=v_w$,  then   $u$,  $w$, $v_u$  are  contained in one of the cycles
	$C_i(v_u)$, $1 \leqslant i \leqslant k_{v_u}$ and $P_u$,  $P_v$ intersect
	by construction.

	Next we show that  $v_u\neq v_w$ cannot happen. Indeed,  if $v_u\neq
	v_w$, then consider the sequences $S_{v_u}$ and $S_{v_w}$
	described in IA1. Notice that both sequences 
	start
	with $v_0$ and denote by $x$ the last common vertex in  $S_{v_u}$ 
	and~$S_{v_w}$. Clearly $x\neq v_u$ or $x\neq v_w$ by assumption. IA2
	implies  the existence of two paths $Q_{u,x}$, $Q_{w,x}$  in  $G$ joining 
	$x$
	with $u$ and $w$, respectively, and (except for~$x$) consisting of gray 
	vertices at the moment of
	the exploration of $x$. But then $Q_{u,x}$, $Q_{w,x}$  and  $\{u,w\}$  
	would close a
	cycle with all 
	vertices  but $x$ being gray when $x$ is selected for exploration. This 
	would
	imply that both $u$ and $w$ turn green during the exploration of $x$,   
	hence
	$v_u=v_w=x$ would hold.
	
	To summarize, in all possible cases whenever $u$ and $w$ are adjacent in 
	$G$, the paths $P_u$ and $P_v$ intersect.
	\smallskip
	
	Finally, we show that two vertices $u$ and $w$ of $G$ are adjacent,
	whenever the corresponding grid  paths $P_u$, $P_w$ 
	intersect. Assume w.l.o.g.\ that 
	$P_w$ was
	constructed by the algorithm  before $P_u$. Observe that by construction
	all grid  paths  constructed after the completion of 
	the exploration of $w$  do not intersect $P_w$.  
	Thus $P_u$ has been constructed during the
	exploration of $w$. But then, by construction  $P_w$ and $P_u$
	intersect iff $w$ and $u$ are  neighbors.
\end{proof}
A standard time complexity  analysis of \autoref{alg: b1m
  cactus} reveals that  a $B_1^m$-EPG
    representation of a cactus $G$ of order $n$  can be constructed in $O(n^2)$ 
    time.
The existence of   a faster algorithm remains an open 
question. 

Finally, we summarize the results of this  section as follows.
\begin{corollary}
    The (monotonic) bend number of a cactus $G$ is given as follows.
    \begin{align*}
        b(G) = b^m(G) &=&\left \{ \begin{array}{ll}
            0 & \mbox{if $G$ contains no copy of 
                $M_{2}$, $M_{3}$, $C_{r}$, for  $r \geqslant 4$,}\\
            1 & \mbox{otherwise}\end{array} \right.
    \end{align*}
\end{corollary}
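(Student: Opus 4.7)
The plan is to prove this corollary by directly combining two results already established in this section, treating each case of the piecewise definition separately. The structure of the argument is a straightforward assembly, so the work is almost entirely bookkeeping; no new constructions are needed.

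First I would handle the top case. Suppose $G$ contains no copy of $M_2$, $M_3$, or $C_r$ for $r\geqslant 4$, i.e., $G$ is $MC$-free in the sense of \autoref{MC-free-ouerplanar}. Then \autoref{B_0_cactus:theo} gives $G\in B_0$ directly, so $b(G)=0$, and since $B_0=B_0^m$ we also have $b^m(G)=0$. This case is immediate.

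Next I would handle the ``otherwise'' case, where $G$ contains at least one of $M_2$, $M_3$, or $C_r$ with $r\geqslant 4$ as an induced subgraph. The plan here has two steps: establish the lower bound, then the upper bound. For the lower bound, observe first that $B_0$-EPG-representability is hereditary under induced subgraphs: given a $B_0$-EPG representation of $G$, restricting the family of paths to those corresponding to vertices of any induced subgraph $H$ yields a $B_0$-EPG representation of $H$. Combining this heredity with \autoref{cn_nin_b0} and \autoref{m2_m3_nin_b0} yields $G\notin B_0$, hence $b(G)\geqslant 1$ and (using $B_0=B_0^m$) $b^m(G)\geqslant 1$. For the upper bound, I would appeal to \autoref{out_in_b2m}: every cactus is outerplanar, so $b^m(G)\leqslant 2$, and then $b(G)\leqslant b^m(G)\leqslant 2$.

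The main obstacle in matching the stated value $2$ exactly (rather than merely the interval $\{1,2\}$) is ruling out membership in $B_1$ and $B_1^m$ for non-$MC$-free cacti. The natural route would be to exhibit, for each of $M_2$, $M_3$, and $C_r$ ($r\geqslant 4$), an obstruction in every $B_1$-EPG representation of a cactus containing it. However, I would flag that \autoref{B1m:cactus} of this very section already shows every cactus lies in $B_1^m\subseteq B_1$, which forces $b^m(G)\leqslant 1$ and $b(G)\leqslant 1$ in all cases; so the lower bound $\geqslant 1$ combined with this upper bound pins the value in the otherwise case down to $1$, and the claimed value $2$ cannot be attained. My proof plan therefore produces the characterization with values $0$ and $1$, and I would note in the writeup that the $2$ in the statement appears to be a transcription error for $1$, since any attempt to drive the value up to $2$ would directly contradict \autoref{B1m:cactus}.
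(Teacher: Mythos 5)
Your proposal is correct and assembles the result in exactly the way the paper intends: the $MC$-free case follows from \autoref{B_0_cactus:theo}, and the lower bound in the other case follows from \autoref{cn_nin_b0}, \autoref{m2_m3_nin_b0} and the heredity of $B_0$ under induced subgraphs. You are also right to flag the value $2$: it is inconsistent with \autoref{B1m:cactus}, which pins the value in the ``otherwise'' case to $1$, and the paper's own summary in \autoref{sec:Conclusions} confirms this, stating that $b(G)=b^m(G)\leqslant 1$ for every cactus and that $b(G)=b^m(G)=1$ holds when $G$ is not $MC$-free. So the $2$ in the displayed formula is indeed a typo for $1$, and your writeup identifies and repairs it correctly.
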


%-------------------------------------------------------------
% Conclusions
%-------------------------------------------------------------

\section{Conclusions and open problems}
\label{sec:Conclusions}

In this paper we focused on EPG representations and on the
(monotonic) bend number   of outerplanar graphs. In particular, we dealt with 
two subclasses of outerplanar graphs: the maximal outerplanar graphs and the
cacti.  The main contribution of the paper is the full characterization of
graphs with (monotonic) bend number equal to $0$, $1$ or $2$ in the subclasses
mentioned above.  All presented proofs are constructive and lead to
efficient algorithms for the   construction of   the corresponding  EPG
representations. 

It was
already known from \cite{C,D} that $2$ is the best possible  upper bound on 
the bend number
of  outerplanar graphs. 
In this paper we  showed that $2$ is also an upper bound on  the monotonic bend
number of outerplanar graphs, i.e.\  $b^m(G)\leqslant 2$ holds for every outerplanar graph $G$. 
The result of~\cite{C} implies  the existence of  an
outerplanar graph which is not in $B_{1}^{m}$, so we conclude that $2$ is a
best possible 
 upper bound on the monotonic bend number of outerplanar graphs.
Thus,  the best possible  upper bounds on the monotonic
bend 
number  and on the bend number  coincide for  outerplanar graphs.

In~\cite{D} the inequality $b(G)\leqslant 2$  for the class of   graphs $G$
of  treewidth at most $2$ is proven, the outerplanar graphs being a proper 
subclass of
the later. So far  no  upper bound is known  on $b^m(G)$ for
graphs $G$  with treewidth bounded by $2$. In particular,  it is an open question
whether the coincidence mentioned above extends  to the class of graphs with treewidth
bounded by~$2$.

 Another  challenging open question concerns    the monotonic bend number of  
 planar graphs. As shown in \cite{D},  $b(G) \leqslant 4$ holds   for every
 planar graph.  Also the existence of a 
 planar graph with bend number at least $3$ is shown in \cite{D}. So the best
 possible upper bound on the bend number of planar graphs is $3$ or $4$. With
 respect to the upper bound on the  monotonic bend number of planar graphs we
 only now that it is at least $3$. 
\smallskip

Our  precise results on the (monotonic) bend number of maximal outerplanar
graphs
can be summarized as follows. 
 We distinguished two types  of maximal outerplanar graphs:
(I) maximal outerplanar graphs which do not contain $S_3$ as
an induced subgraph, and (II) maximal outerplanar graphs which do  contain $S_3$ as
an induced subgraph. 
We showed that $b(G)=b^m(G)=0$ holds for every  graph $G$ of type I.
Thus  for maximal outerplanar graphs the classes $B_0$ and
$B_1^m$ coincide, hence allowing two more shapes of paths in the EPG
representation does not increase the class  of maximal outerplanar graphs,
which can be represented.
  For graphs of type II we showed that $b^m(G)=2$ holds,   while
$b^m(G)=b(G)$ does not necessarily hold. More precisely, for 
 graphs $G$ of type II  the equality $b(G)=1$ holds  iff  $G$ is $M$-free
 (see \autoref{M-free-ouerplanar} and \autoref{out_tri_in_b1}).  Otherwise $b(G)=2$ holds. 
 \smallskip
 
 Our results on cacti can be summarized as follows. 
  For cacti the monotonic bend number and the bend number  coincide and are
  bounded by~$1$, i.e.\ and $b(G)=
b^m_1(G)\leqslant 1$ holds for every cactus $G$. 
Furthermore  $b(G)= b^m(G)= 0$ holds iff the  cactus $G$ is $MC$-free (see \autoref{MC-free-ouerplanar}). Otherwise 
$b(G)=b^m(G)=1$ holds.

Observe that the two investigated  subclasses of
 outerplanar graphs, the maximal outerplanar graphs and the cacti,   behave
 quite differently in terms of the (monotonic) bend number. This is not
 surprising  given  the major structural differences of  these two  graph
 classes.
\smallskip

The full characterization of outerplanar graphs with a   (monotonic) bend
number equal to $1$ or $2$ remains a challenging open question. In particular, 
 the characterization of Halin graphs and series-parallel graphs  with a   
 (monotonic) bend
number equal to $1$ or $2$ are interesting open questions. Note that both 
graphs 
classes belong to the planar
graphs and $2$ is an upper bound on the bend number for both of them.
In~\cite{KHalinGraphs} it was shown that $b^m(G)\leqslant 2$ for every  Halin graph
$G$. In the case of series-parallel graphs the upper bound of $2$ follows form the results in~\cite{D} and the fact that the treewidth of
series-parallel graphs is bounded by $2$, 
see~\cite{OuterplanarGraphsTreewidth2}. 
%In contrast, the
%treewidth of Halin graphs is bounded by $3$, 
%see~\cite{OuterplanarGraphsTreewidth2}.

%\section{Acknowledgment}
%Elisabeth Gaar acknowledges support by the Austrian Science Fund (FWF): I 
%3199-N31.

%-------------------------------------------------------------
% Literaturverzeichnis
%-------------------------------------------------------------

\ifArXiV %----------------------------------------------------------------------
\bibliographystyle{plain}
\else %-------------------------------------------------------------------------
\bibliographystyle{abbrvurl}
\fi %---------------------------------------------------------------------------

\bibliography{papers}

\end{document}